\definecolor{darkgreen}{rgb}{0.00,0.33,0.25}
\definecolor{darkred}{rgb}{0.60,0.05,0.05}
\definecolor{darkblue}{rgb}{0.05,0.05,0.60}
\numberwithin{equation}{section}
\DeclareFontFamily{U}{jkpmia}{}
\DeclareFontShape{U}{jkpmia}{m}{it}{<->s*jkpmia}{}
\DeclareFontShape{U}{jkpmia}{bx}{it}{<->s*jkpbmia}{}
\DeclareMathAlphabet{\mathfrak}{U}{jkpmia}{m}{it}
\SetMathAlphabet{\mathfrak}{bold}{U}{jkpmia}{bx}{it}
\newcommand{\e}{\mathrm{e}}
\def\FG{\mathbf}
  \def\bfr{{\FG r}}
\def\BS{\boldsymbol}
 \def\bfmu{{\BS\mu}}
  \def\bfrho{{\BS\rho}}
\newcommand{\R}{\mathbb{R}}
\newcommand{\D}{\mathrm{d}}
\newcommand{\dd}{\, \mathrm{d}}
\newcommand{\ddd}{\mathrm{d}}
\newcommand{\sd}{\mathsf{d}}
\newcommand{\sG}{\mathsf{G}}
\newcommand{\sE}{\mathsf{E}}
\newcommand{\sV}{\mathsf{V}}
\newcommand{\eps}{\varepsilon}
\definecolor{jan}{rgb}{0.0,0.3,0.8}
\definecolor{mat}{rgb}{0.0,0.5,0.3}
\newtheorem{theorem}{Theorem}[section]
\newtheorem{proposition}[theorem]{Proposition}
\newtheorem{lemma}[theorem]{Lemma}
\newtheorem{corollary}[theorem]{Corollary}
\theoremstyle{definition}
\newtheorem{definition}[theorem]{Definition}
\newtheorem{example}[theorem]{Example}
\theoremstyle{remark}
\newtheorem{remark}[theorem]{Remark}
\title[Wasserstein gradient flows over metric graphs]{Gradient Flow Formulation of Diffusion equations in the Wasserstein Space over a Metric Graph}
\author{Matthias Erbar}
\author{Dominik Forkert} 
\author{Jan Maas}
\author{Delio Mugnolo}
\address{Matthias Erbar, Fakult\"at f\"ur Mathematik, Universit\"at Bielefeld\\ Postfach 100131, 33501 Bielefeld\\
Germany}
\email{erbar@math.uni-bielefeld.de}
\address{Dominik Forkert,
Institute of Science and Technology Austria (IST Austria)\\
Am Campus 1\\ 
3400 Klos\-ter\-neu\-burg\\ 
Austria}
\email{dominik.forkert@ist.ac.at}
\address{Jan Maas, 
Institute of Science and Technology Austria (IST Austria)\\
Am Campus 1\\ 
3400 Klos\-ter\-neu\-burg\\ 
Austria}
\email{jan.maas@ist.ac.at}
\address{Delio Mugnolo, Lehrgebiet Analysis, Fakult\"at Mathematik und Informatik, Fern\-Universit\"at in Hagen, D-58084 Hagen, Germany}
\email{delio.mugnolo@fernuni-hagen.de}
\def\math#1{$#1$}
\def\reals{{\mathbb R}}
\def\naturalnumbers{{\mathbb N}}
\def\indicator{{\mathbbm 1}}
\def\abs#1{{\left\vert #1 \right\vert}}
\def\norm#1{{\left\Vert #1 \right\Vert}}
\def\calP{{\mathcal P}}
\def\calW{{\mathcal W}}
\def\cT{{\mathcal T}}
\def\cV{{\mathcal V}}
\def\calL{{\mathcal L}}
\def\calI{{\mathcal I}}
\def\calB{{\mathcal B}}
\def\calM{{\mathcal M}}
\def\calF{{\mathcal F}}
\def\calE{{\mathcal E}}
\def\calV{{\mathcal V}}
\def\frakG{{\mathfrak G}}
\def\frakE{{\mathfrak E}}
\def\proj{\operatorname{proj}}
\def\Ent{\operatorname{Ent}}
\def\supp{\operatorname{supp}}
\def\Id{{\operatorname{Id}}}
\def\path{{\operatorname{path}}}
\def\init{{\operatorname{init}}}
\def\term{{\operatorname{term}}}
\def\Lip{{\operatorname{Lip}}}
\def\lip{{\operatorname{lip}}}
\def\argmin{\operatorname*{argmin}}
\def\Span{{\operatorname{span}}}
\def\ext{{\operatorname{ext}}}
\begin{document}

\begin{abstract}
This paper contains two contributions in the study of optimal transport on metric graphs. 
Firstly, we prove a Benamou--Brenier formula for the Wasserstein distance, which establishes the equivalence of static and dynamical optimal transport.
Secondly, in the spirit of Jordan--Kinderlehrer--Otto, we show that McKean--Vlasov
equations can be formulated as gradient flow of the free energy in the Wasserstein space of probability measures. 
The proofs of these results are based on careful regularisation arguments to circumvent some of the difficulties arising in metric graphs, namely, branching of geodesics and the failure of semi-convexity of entropy functionals in the Wasserstein space.
\end{abstract}

\keywords{metric graph, optimal transport, gradient flow, entropy, McKean--Vlasov equation} 

\subjclass[2010]{49Q22, 60B05, 35R02}

\maketitle 

\section{Introduction}
This article deals with the equivalence of static and dynamical optimal transport on metric graphs,
and with a gradient flow formulation of McKean--Vlasov equations in the Wasserstein space of probability measures. 

\smallskip 

Let $(\sV,\sE)$ be a finite (undirected) connected graph and let $\ell: \sE \to (0,\infty)$ be a given weight function.
Loosely speaking, the associated \emph{metric graph} is the geodesic metric space $(\frakG, \sd)$ obtained by identifying the edges $e \in \sE$ with intervals of length $\ell_e$, and gluing the intervals together at the nodes. In other words, a metric graph describes a continuous ``cable system'', rather than a discrete set of nodes; see Section \ref{sec:1} for a more formal definition. 
Metric graphs arise in many applications in chemistry, physics, or engineering, describing quasi-one-dimensional systems such as carbon nano-structures, quantum wires, transport networks, or thin waveguides. They are also widely studied in mathematics; see \cite{berkolaiko2013introduction,Mugnolo} for an overview.

For $1 \leq p < \infty$, 
the $L^p$-Wasserstein distance $W_p$ 
between Borel probability measures $\mu$ and $\nu$ on $\frakG$ 
is defined by the Monge--Kantorovich transport problem
\begin{equation*}
	W_p(\mu, \nu) := 
	\min_{\sigma\in \Pi(\mu, \nu)} \bigg\{ \int_{\frakG \times \frakG} \sd^p(x,y) \dd \sigma(x,y) \bigg\}^{1/p},
\end{equation*}
where $\Pi(\mu,\nu)$ denotes the set of transport plans; i.e., all Borel probability measures $\sigma$ on $\frakG \times \frakG$ with respective marginals $\mu$ and $\nu$.
Since $\frakG$ is compact, the Wasserstein distance $W_p$ metrises the topology of weak convergence of Borel probability measures on $\frakG$, for any $p \geq 1$. 
The resulting metric space of probability measures is the \emph{$L^p$-Wasserstein space over $\frakG$}.

Metric graphs $(\frakG,\sd)$ 
are prototypical examples of metric spaces that exhibit \emph{branching of geodesics}:
it is (typically) possible to find two distinct constant speed geodesics $(\gamma_t)_{t \in [0,1]}$ and $(\tilde\gamma_t)_{t \in [0,1]}$, taking the same values for all times $t \in [0,t_0]$ up to some $t_0 \in (0,1)$.
For this reason, several key results from optimal transport
 are not directly applicable to metric graphs.
This paper contains two of these results.

\smallskip

\subsection*{The Benamou--Brenier formula}

On Euclidean space $\R^d$, a dynamical characterisation of the Wasserstein distance has been obtained in celebrated works of Benamou and Brenier \cite{benamou1999numerical,benamou2000computational}. 
The \emph{Benamou--Brenier formula} asserts that 
\begin{equation}		\label{eq:intro_BB0}
	W_2^2(\mu, \nu) = \inf_{(\mu_t,v_t)} \Bigl\{ \int_0^1 \norm{v_t}_{L^2(\mu_t)}^2 \dd t \Bigr\}
\end{equation}	

where the infimum runs over all 2-absolutely continuous curves $(\mu_t)_{t \in [0,1]}$ in the $L^2$-Wasserstein space over $\R^d$, satisfying the continuity equation
\begin{equation}		
	\label{eq:intro_CE1}
	\frac{\dd}{\dd t} \mu_t + \nabla \cdot (v_t \mu_t) = 0
\end{equation}
with boundary conditions $\mu_0 = \mu$ and $\mu_1 = \nu$.

Here we are interested in obtaining an analogous result in the setting of metric graphs. However, such an extension is not straightforward, since standard proofs in the Euclidean setting (see \cite{AGS,Sant}) make use of the flow map $T : [0,1] \times \R^d \to \R^d$ associated to a
(sufficiently regular) vector field $(v_t)_{t\in[0,1]}$, which satisfies
\begin{equation*}
 \frac{\dd}{\dd t} T(t,x) = v_t\big(T(t,x)\big), \qquad T(0, x) = x, \qquad
 T(t, \cdot)_\# \mu_0 = \mu_t, 
\end{equation*}
see, e.g., \cite[Proposition 8.1.8]{AGS}.
On a metric graph such a flow map $T$ typically fails to exist, since solutions $(\mu_t)_{t \in [0,1]}$ to the continuity equation \eqref{eq:intro_CE1} are usually not uniquely determined by an initial condition $\mu_0$ and a given vector field $(v_t)_{t \in [0,1]}$.

Circumventing this difficulty, Gigli and Han obtained a version of the Benamou--Brenier formula in very general metric measure spaces \cite{gigli2015continuity}.
However, this paper requires a strong assumption on the measures (namely, a uniform bound on the density with respect to the reference measure). 
While this assumption is natural in the general setting of \cite{gigli2015continuity}, it is unnecessarily restrictive in the particular setting of metric graphs.

In this paper, we prove a Benamou--Brenier formula that applies to arbitrary Borel probability measures on metric graphs. 
The key ingredient in the proof is a careful regularisation step for solutions to the continuity equation.

\subsection*{Gradient flow structure of diffusion equations}

As an application of the Bena\-mou--Brenier formula, we prove another central result from optimal transport: the identification of diffusion equations as gradient flow of the free energy in the Wasserstein space $(\calP(\frakG), W_2)$.
In the Euclidean setting, results of this type go back to the seminal work of Jordan, Kinderlehrer, and Otto \cite{jordan1998variational}.

Here we consider diffusion equations of the form
\begin{equation}\label{eq:MKVeqintro}
	\partial_{t}\eta
	= \Delta\eta 	
	+\nabla\cdot\big(\eta\big(\nabla V+\nabla W[\mu]\big)\big),
\end{equation}
for suitable potentials $V : \frakG \to \R$ and $W : \frakG \times \frakG \to \R$.
In analogy with the Euclidean setting, we show that this equation arises as the gradient flow equation 
for a free energy $\calF : \calP(\frakG) \to (-\infty,+\infty]$ composed as the sum of 
entropy, potential, and interaction energies:
\begin{align*}
	\calF(\mu)
	&:=
		\int_{\frakG}
			\eta(x)\log \eta(x)
		\dd\lambda(x)
		+\int_{\frakG} V(x) \eta(x)\dd \lambda(x)
	\\& \qquad	+\frac12
	 	\int_{\frakG \times \frakG} 
		 	W(x,y)\eta(x)\eta(y)
		\dd\lambda(x) \dd\lambda(y),
\end{align*}
if $\mu=\eta\lambda$, where $\lambda$ denotes the Lebesgue measure on $\frakG$.

A key difference compared to the Euclidean setting is that the entropy is \emph{not} semi-convex along $W_2$-geodesics; see Section \ref{sec:2}. 
This prevents us from applying ``off-the-shelf'' results from the theory of metric measure spaces. 
Instead, we present a self-contained proof of the gradient flow result. Its main ingredient is a chain rule for the entropy along absolutely continuous curves, which we prove using a regularisation argument.

Interestingly, we do \emph{not} need to assume continuity of the potential $V$ at the vertices.
Therefore our setting includes diffusion with possibly singular drift at the vertices. 

\medskip 

The Wasserstein distance over metric graphs for $p = 1$ has been studied in \cite{MRT}. The focus is on the approximation of Kantorovich potentials using $p$-Laplacian type problems. 

The recent paper \cite{Burger-Humpert-Pietschmann:2021} deals with dynamical optimal transport metrics on metric graphs. 
The authors start with the dynamical definition \`a la Benamou--Brenier and consider links to several other interesting dynamical transport distances.
The current paper is complementary, as it shows the equivalence of static and dynamical optimal transport, and a gradient flow formulation for diffusion equations. 

Various different research directions involving optimal transport and graphs exist.
 In particular, \emph{dynamical optimal transport on discrete graphs} have been intensively studied in recent years following the papers \cite{Chow-Huang-Li:2012,Maas:2011,Mielke:2011}.
 The underlying state space in these papers is a discrete set of nodes rather than a gluing of one-dimensional intervals.
 
 Another line of research deals with \emph{branched optimal transport}, which is used to model phenomena such as road systems, communication networks, river basins, and blood flow. Here one starts with atomic measures in the continuum, and graphs emerge to describe paths of optimal transport \cite{Xia:2003,Bernot-Caselles-Morel:2009}.

\subsection*{Organisation of the paper}
In Section~\ref{sec:preliminaries} we collect preliminaries on optimal transport and metric graphs.
Section \ref{sec:CE} is devoted to the continuity equation and the Benamou--Brenier formula on metric graphs. 
In particular, we present a careful regularisation procedure for solutions to the continuity equation.
Section~\ref{sec:2} contains an example which demonstrates the lack convexity of the entropy along $W_2$-geodesics in the setting of metric graphs.
Section~\ref{sec:3} deals with the gradient flow formulation of diffusion equations.

\section{Preliminaries}	\label{sec:preliminaries}

In this section we collect some basic definitions and results from optimal transport and metric graphs. 

\subsection{Optimal transport}

In this section we collect some basic facts on the family of \math{L^p}-Wasserstein distances on spaces of probability measures. We refer to \cite[Chapter~5]{Sant}, \cite[Chapter~2]{AG} or \cite[Chapter~6]{Villani} for more details. 

Let \math{(X,d)} be a compact metric space. 
The space of Borel probability measures on \math{X} is denoted by \math{\calP(X)}. 
The pushforward measure $T_\# \mu$ induced by a Borel map $T : X \to Y$ between two Polish spaces is defined by $(T_\# \mu)(A) := \mu(T^{-1}(A))$ for all Borel sets $A \subseteq Y$.

\begin{definition}[Transport plans and maps]\mbox{}
\begin{enumerate}
\item
A \emph{(transport) plan} between probability measures \math{\mu, \nu \in \calP(X)} is a probability measure \math{\sigma \in \calP(X \times X)} with respective marginals \math{\mu} and \math{\nu}, i.e.,
\begin{equation*}
 (\proj_1)_\# \sigma = \mu \qquad \text{and} \qquad (\proj_2)_\# \sigma = \nu,
\end{equation*}
where \math{\proj_i(x_1, x_2 ) := x_i} for \math{i = 1, 2}. 
The set of all transport plans between \math{\mu} and \math{\nu} is denoted by \math{\Pi(\mu, \nu)}.
\item
A transport plan \math{\sigma \in \Pi(\mu, \nu)} is said to be induced by a Borel measurable \emph{transport map} \math{T: X \to X} if \math{\sigma = (\Id, T)_\# \mu}, where $(\Id, T) : X \to X \times X$ denotes the mapping $x \mapsto (x, T(x))$.
\end{enumerate}
\end{definition}

\begin{definition}[Kantorovich--Rubinstein--Wasserstein distance]\label{def:Wass}
For \math{p \geq 1}, the \emph{\math{L^p}-Kantorovich--Rubinstein--Wasserstein distance} between probability measures \math{\mu, \nu \in \calP(X)} is defined by 
\begin{equation}	\label{eq:def:Wasserstein_distance}
W_p(\mu, \nu) := \inf \biggl\{ \biggr( \int_{X \times X} d^p(x,y) \dd \sigma(x,y) \biggr)^{1/p}: \sigma \in \Pi(\mu, \nu) \biggr\}.
\end{equation}
\end{definition}

For any $\mu, \nu \in \calP(X)$ the infimum above is attained by some \math{\sigma_{\min} \in \Pi(\mu, \nu)}; 
any such \math{\sigma_{\min}} is called an \emph{optimal (transport) plan} between \math{\mu} and \math{\nu}. 
If a transport map \math{T} induces an optimal transport plan, we call \math{T} optimal as well.

By compactness of $(X,d)$, the \math{L^p}-Wasserstein distance metrises the weak convergence in \math{\calP(X)} for any \math{p \geq 1}; see, e.g., \cite[Corollary 6.13]{Villani}. Moreover, \math{(\calP(X), W_p)} is a compact metric space as well.

We conclude this section with a dual formula for the Wasserstein distance (see, e.g., \cite[Section~1.6.2]{Sant}).
To this aim, we recall that for 
\math{c : X \times X \to \R}, 
the \emph{\math{c}-transform} of a function \math{\varphi: X \to \reals \cup \{+\infty\}} is defined by \math{\varphi^c(y) := \inf_{x \in X} c(x,y) - \varphi(x)}.
A function \math{\psi: X \to \reals \cup \{+\infty\}} is called \emph{\math{c}-concave} if there exists a function \math{\varphi: X \to \reals \cup \{+\infty\}} such that \math{\psi = \varphi^c}.

\begin{proposition}[Kantorovich duality]
\label{prop:Kantorovich-duality}
For any \math{\mu, \nu \in \calP(X)} we have
\begin{equation*}
	W_p^p(\mu, \nu)
		 = \sup_{\varphi, \psi \in C(X)}
		 	 \biggl\{ \int_X \varphi \dd \mu 
		 		 	 + \int_X \psi \dd \nu 
					 	\ : \ 
			\varphi(x) + \psi(y) \leq d^p(x,y) 
						 \quad \forall x,y \in X \biggr\}.
\end{equation*}
Moreover, the supremum is attained by a maximising pair of the form \math{(\varphi, \psi) = (\varphi, \varphi^c)}, where \math{\varphi} is a \math{c}-concave function, for $c(x,y):= d^p(x,y)$.
\end{proposition}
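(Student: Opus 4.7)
The plan is to establish the identity by the standard duality scheme (weak duality $+$ Fenchel--Rockafellar for strong duality $+$ Arzel\`a--Ascoli for attainment), exploiting that $c(x,y) := d^p(x,y)$ is continuous, hence uniformly continuous, on the compact space $X \times X$. The ``$\geq$'' inequality is immediate weak duality: for any admissible pair $(\varphi,\psi)\in C(X)^2$ and any $\sigma \in \Pi(\mu,\nu)$, integrating the pointwise bound against $\sigma$ gives
\begin{equation*}
  \int_X \varphi \dd\mu + \int_X \psi \dd\nu
  = \int_{X\times X}\!\bigl(\varphi(x)+\psi(y)\bigr) \dd\sigma(x,y)
  \leq \int_{X\times X}\!\! d^p(x,y) \dd\sigma(x,y),
\end{equation*}
and taking the infimum over $\sigma$ followed by the supremum over $(\varphi,\psi)$ yields the desired inequality.

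For the reverse inequality I would apply the Fenchel--Rockafellar theorem on the Banach space $E := C(X\times X)$. Introduce the two convex functionals $F, G : E \to \reals \cup\{+\infty\}$, where $F(u) = 0$ if $u(x,y) \leq d^p(x,y)$ everywhere and $+\infty$ otherwise, and $G(u) = -\int_X \varphi \dd\mu - \int_X \psi \dd\nu$ if $u(x,y) = \varphi(x)+\psi(y)$ for some $\varphi,\psi \in C(X)$ and $+\infty$ otherwise. Then the dual problem $\sup\{\ldots\}$ equals $-\inf_{u\in E}\bigl(F(u)+G(u)\bigr)$. Since $F$ is continuous at any $u \equiv -C$ with $C$ large, Fenchel--Rockafellar applies and gives strong duality. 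Identifying $E^*$ with the space of finite signed Radon measures via Riesz representation, the Legendre transforms $F^*$ and $G^*$ force the dual variable to be a nonnegative probability measure with prescribed marginals $\mu,\nu$; the dual infimum is then exactly $\int d^p \dd\sigma$ over $\sigma \in \Pi(\mu,\nu)$, yielding $W_p^p(\mu,\nu) \leq \sup\{\ldots\}$.

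Attainment by a $c$-concave pair follows by a now-standard improvement argument. Given a maximising sequence $(\varphi_n,\psi_n)$, first replace $\psi_n$ by $\varphi_n^c$: admissibility is preserved and pointwise $\varphi_n^c \geq \psi_n$, so the objective does not decrease; then replace $\varphi_n$ by $(\varphi_n^c)^c = \varphi_n^{cc}$, again not decreasing the objective. The new $\varphi_n$ is $c$-concave. Because $c$ is uniformly continuous on $X\times X$ with some modulus $\omega$, every $c$-concave function shares the modulus $\omega$, so after normalising (subtracting a constant from $\varphi_n$ and adding it to $\psi_n$, which leaves the objective invariant) the family $(\varphi_n)$ is uniformly bounded and equicontinuous. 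Arzel\`a--Ascoli then yields a uniformly convergent subsequence $\varphi_n \to \varphi$, and passing to the limit in $\psi_n = \varphi_n^c$ (for which uniform continuity gives uniform convergence $\psi_n \to \varphi^c$) produces the optimal pair $(\varphi,\varphi^c)$. The main obstacle is the strong duality step; the attainment argument is then a routine compactness consequence of the continuity of $c$ on the compact space $X\times X$.
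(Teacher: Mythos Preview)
Your argument is essentially correct and follows the standard route (weak duality, Fenchel--Rockafellar for strong duality, then the double $c$-transform plus Arzel\`a--Ascoli for attainment). Note, however, that the paper does not actually prove this proposition: it is stated as a well-known fact with a reference to \cite[Section~1.6.2]{Sant}, and no proof is given in the text. So there is no ``paper's own proof'' to compare against; your write-up is precisely the kind of argument one finds in that reference.
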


Any maximiser $\varphi$ is called a \emph{Kantorovich potential}.

\subsection{Metric graphs}
\label{sec:1}

In this subsection we state some basic facts on metric graphs; see e.g., \cite{MRT}, \cite{berkolaiko2013introduction} or \cite{kuchment2008quantum} for more details.

\begin{definition}[Metric graph]
Let \math{\sG=(\sV,\sE, \ell)} be an oriented, weighted graph, which is finite and connected.
We identify each edge \math{e=(e_\init, e_\term) \in \sE} with an interval \math{(0,\ell_e)} and the corresponding nodes \math{e_\init, e_\term \in \sV} with the endpoints of the interval, ($0$ and $\ell_e$ respectively). 
The \emph{spaces of open and closed metric edges} over \math{\sG} are defined as the respective topological disjoint unions
\begin{equation*}
\frakE := \coprod_{e \in \sE} (0, \ell_e) \qquad \text{and} \qquad \overline{\frakE} := \coprod_{e \in \sE} [0, \ell_e].
\end{equation*}
The \emph{metric graph over \math{\sG}} is the topological quotient space
\begin{equation*}
\frakG := \faktor{\overline{\frakE} }{\sim},
\end{equation*}
where points in $\overline{\frakE}$ corresponding to the same vertex are identified. 
\end{definition}

Note that the orientation of \math{e} determines the parametrisation of the edges, but does not otherwise play a role.
To distinguish ingoing and outgoing edges at a given node, 
we introduce the signed incidence matrix $\mathcal I=(\iota_{ev})$ whose entries are given by
\begin{align*}
\iota_{ev}:=\begin{cases}
 +1 \quad&\hbox{if }v=e_\init,\\
 -1 &\hbox{if }v=e_\term,\\
0 &\hbox{otherwise}.
\end{cases}
\end{align*}

As a quotient space, any metric graph naturally inherits the structure of a metric space from the Euclidean distance on its metric edges \cite[Chapter 3]{Burago-Burago-Ivanov:2001}: indeed, under our standing assumption that $\frakG$ is connected the quotient semi-metric $\sd$ becomes a metric.

The distance $\sd:\frakG\times\frakG\to[0,\infty)$ on $\frakG$ can be more explicitly described as follows:
For \math{x,y \in \frakG}, let $\tilde \sG = (\tilde \sV,\tilde\sE)$ be the underlying discrete graph obtained by 
adding new vertices at \math{x} and \math{y}, and let $\sd(x,y)$ be the weighted graph distance between $x$ and $y$ in $\tilde \sG$, i.e.,
\[\sd(x,y):=\min \sum_{i=1}^n \ell_{e_i},  \]
where the minimum is taken over all sequences of vertices $x=x_0,\dots, x_n=y$ in the extended graph $\tilde \sG$ such that $x_{i-1}$ and $x_i$ are vertices of an edge  $e_i\in \tilde \sE$ for all $i=1,\dots,n$.
In particular, if $x$ and $y$ are vertices in $\sG$, no new nodes are added, and we recover the graph distance in the original graph. We refer to~\cite[Chapter~3]{Mugnolo} for details.

By construction, the distance function \math{\sd} metrises the topology of \math{\frakG}. It is readily checked that $\sd$ is a geodesic distance, i.e., each pair of points $x,y\in\frakG$ can be joined by a curve of minimal length $\sd(x,y)$. Consequently, also the Wasserstein space $(\calP(\frakG),W_2)$ is a geodesic space.

\medskip
In a metric space $(X,d)$, recall that the \emph{local Lipschitz constant} of a function \math{f: X \to \reals} is defined by
\begin{equation*}
	\lip(f)(x) := \limsup_{y \to x} \frac{\abs{f(y) - f(x)}}{d(x,y)},
\end{equation*}
whenever \math{x \in X} is not isolated, and \math{0} otherwise. 
The \emph{(global) Lipschitz constant} is defined by 
\begin{equation*}
	\Lip(f) := \sup_{y \neq x} \frac{\abs{f(y) - f(x)}}{d(x,y)}.
\end{equation*}
If the underlying space $X$ is a geodesic space, we have \math{\Lip(f) = \sup_x \lip(f)(x)}.

\medskip

At the risk of being redundant, we explicitly introduce a few relevant function spaces, although they are actually already fully determined by the metric measure structure of the metric graph $\frakG$.

\begin{enumerate}[(i)]
	\item 
\math{C(\frakG)} denotes the space of continuous real-valued functions on \math{\frakG}, endowed with the uniform norm \math{\norm{\cdot}_\infty}. 
\item 
\math{C^k(\overline \frakE)} is the space of all functions $\varphi$ on \math{\overline \frakE} such that the restriction to each closed edge has continuous derivatives up to order \math{k \in \naturalnumbers}.
\item 
\math{L^p(\frakG)}, for \math{p \in [1,\infty]}, is the \math{p}-Lebesgue space over the measure space \math{(\frakG, \lambda)}, where \math{\lambda} denotes the image of the 1-dimensional Lebesgue measure on $\overline\frakE$ under the quotient map.
\item Likewise, we consider the Sobolev spaces \math{W^{1,p}(\overline \frakE)} of $L^p(\frakG)$-functions whose restriction on each edge is weakly differentiable with weak derivatives in $L^p(\frakG)$.
\end{enumerate}

\section{The continuity equation on a metric graph}		\label{sec:CE}

In this section we fix a metric graph $\frakG$ and perform a study of the continuity equation
\begin{align} 
	\label{eq:cont-eq}
	\partial_t \mu_t + \nabla \cdot J_t = 0
\end{align}
in this context.

\subsection{The continuity equation} 

In this work we mainly deal with \emph{weak} solutions to the continuity equation, which will be introduced in Definition \ref{def:dist_et_weak_solution1}. 
To motivate this definition, we first introduce the following notion of strong solution.

\begin{definition}[Strong solutions to the continuity equation]\label{def:CE-strong}
A pair of measurable functions $(\rho, U)$ with $\rho: (0,T) \times \frakG \to \R_+$ and $U : (0,T) \times \overline{\frakE} \to \R$ is said to be a strong solution to \eqref{eq:cont-eq} if
\begin{enumerate}[$(i)$]
\item \label{it:rho-reg}
$t \mapsto \rho(t,x)$ is continuously differentiable for every $x \in \frakG$;
\item \label{it:U-reg} $x \mapsto U_t(x)$ belongs to $C^1(\overline \frakE)$ for every $t \in (0,T)$;
\item the continuity equation $\frac{\D}{\D t} \rho_t(x) + \nabla \cdot U_t(x) = 0$ holds for every $t \in (0,T)$ and $x \in \frakE$;
\item \label{it:node} for every $t \in (0,T)$ and $w \in \sV$ we have
$\sum_{e \in \sE_w} \iota_{ew} U_t(w_e) = 0$.
\end{enumerate}
Here, we write $\rho_t := \rho(t, \cdot)$ and $U_t := U(t, \cdot)$ and denote by $\nabla$ the spatial derivative.
Moreover, $\sE_w$ denotes the set of all edges adjacent to the node $w \in \sV$, and $w_e \in \overline{\frakE}$ denotes the corresponding endpoint of the metric edge $e$ which corresponds to $w \in \frakG$.
\end{definition}

To motivate the definition of a weak solution, suppose that we have a strong solution $(\rho_t, U_t)_{t \in (0,T)}$ to the continuity equation \eqref{eq:cont-eq}. 
Let 
$\psi \in C^1(\overline\frakE) 
\cap
C({\frakG})$ 
be a test function.
Integration by parts on every metric edge $e$ gives
\begin{align*}
\frac{\D}{\D t} \int_0^{\ell_e} \psi \rho_t \dd x
	= \int_0^{\ell_e} 
		\nabla \psi \cdot U_t \dd x 
	+ \psi U_t \Big|_{0}^{\ell_e},
\end{align*}
and summation over $e \in \sE$ yields
\begin{align*}
	\frac{\D}{\D t} \int_{\frakG} \psi \rho_t \dd x
	= \int_{\overline \frakE} 
		\nabla \psi \cdot U_t \dd x
	 + \sum_{w \in \sV} \psi(w)
	 \sum_{e \in \sE_w} 
		\iota_{ew} U_t(w_e)
	= \int_{\overline \frakE} 
		\nabla \psi \cdot U_t \dd x,
\end{align*}
where we use the continuity of $\psi$ on $\frakG$ as well as the node condition \eqref{it:node} above in the last step. This ensures that the net ingoing momentum vanishes at every node in $\sV$. In particular, choosing $\psi\equiv1$ yields
\begin{equation*}
 \int_\frakG\rho_s\dd \lambda = \int_\frakG\rho_t\dd \lambda,
\end{equation*}
for all $s, t \in (0,T)$, i.e.\ solutions to the continuity equation are mass-preserving. Here
condition \eqref{it:node} is crucial to ensure that no creation or annihilation of mass occurs at the nodes.

\begin{definition}[Weak solution]\label{def:dist_et_weak_solution1}
A pair $(\mu_t, J_t)_{t \in (0,T)}$ consisting of probability measures \math{\mu_t} on \math{\frakG} and signed measures \math{J_t} on $\overline\frakE$,
such that  $t \mapsto \big(\mu_t(A), J_t(A)\big)$ is measurable for all Borel sets $A$,
is said to be a weak solution to \eqref{eq:cont-eq} if
\begin{enumerate}[$(i)$]
\item \math{t \mapsto \int_{\frakG} \psi \dd \mu_t} is absolutely continuous for every \math{\psi \in 
C^1(\overline\frakE) 
\cap
C({\frakG})};

\item $\int_0^T|J_t|(\overline\frakE)\dd t<\infty$;

\item for every $\psi \in 
C^1(\overline\frakE) 
\cap
C({\frakG})$ and a.e.\ $t \in (0,T)$, we have
\begin{equation}
\label{eq:ce_weak1}
\frac{\D}{\D t} \int_{\frakG} \psi \dd \mu_t = \int_{\overline\frakE} \nabla \psi \cdot \D J_t.
\end{equation}
\end{enumerate}
\end{definition}

\begin{remark}\label{rem:distributional}
Proposition \ref{prop:function-reg} below shows that continuous functions on $\frakG$ can be uniformly approximated by $C^1$ functions.
By standard arguments it then follows that $(\mu_t,J_t)_{t\in(0,T)}$ is a weak solution if and only if the following conditions hold: {$(i')$} $t\mapsto \mu_t$ is weakly continuous; {$(ii)$} from Definition \ref{def:dist_et_weak_solution1} holds; and 
\begin{enumerate}
\item[$(iii')$] for every 
$\varphi \in C^1_c\big((0,T) \times\overline\frakE\big)$
such that  
$\varphi_t,\partial_t\varphi_t
\in 
	C^1(\overline\frakE) \cap C({\frakG})$
for all $t \in (0,T)$	
we have
\begin{equation}\label{eq:dist_et_weak_solution2}
	\int_0^T\! \biggl( \int_{\frakG} 
				\partial_t \varphi \dd\mu_t + 
	\int_{\overline\frakE} \nabla \varphi\cdot \D J_t \biggr) \D t = 0.
\end{equation}
\end{enumerate}
See Lemma \ref{lem:regularised_CE1_weakly_continuous} below for the weak continuity in $(i')$.
\end{remark}

\bigskip
The next result asserts that the momentum field does not give mass to vertices for a.e.\ time point. Hence, we can equivalently restrict the integrals over $\overline{\frakE}$ in \eqref{eq:ce_weak1} and \eqref{eq:dist_et_weak_solution2} to the space of open edges $\frakE$.

\begin{lemma}\label{lem:no-momentum-vertices}
Let $B := \overline\frakE \setminus \frakE$ denote the set of all boundary points of edges. For any weak solution to the continuity equation $(\mu_t, J_t)_{t \in (0,T)}$, we have 
 \begin{equation*}
 \int_0^T|J_t|(B) \dd t = 0.
 \end{equation*}
\end{lemma}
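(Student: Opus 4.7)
The plan is to reduce the statement to a pointwise claim: prove that $J_t(\{p\})=0$ for every $p\in B$ and almost every $t\in(0,T)$. Since $\sE$ is finite, $B$ is a finite set (two endpoints per interval $[0,\ell_e]$). Moreover, for any signed measure $J_t$ and a singleton $\{p\}$, the Jordan decomposition gives $|J_t|(\{p\})=|J_t(\{p\})|$, because the positive and negative parts of $J_t$ are mutually singular and hence cannot both have mass at $p$. Combining, $|J_t|(B)=0$ for a.e.\ $t$, so $\int_0^T|J_t|(B)\,\dd t=0$.

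To isolate a single endpoint I exploit the key feature of the admissible class of test functions: a $\psi\in C^1(\overline\frakE)\cap C({\frakG})$ is required to be continuous across vertices only as a function on $\frakG$, while its edge-wise derivative at any endpoint is entirely unconstrained. Concretely, fix $p\in B$ and parametrise the corresponding edge $e$ so that $p$ sits at coordinate $s=0$. Choose $\chi\in C^\infty([0,\infty))$ with $\chi(0)=1$ and $\supp\chi\subset[0,1)$, and for $0<\varepsilon<\ell_e$ set
\begin{equation*}
 \psi_\varepsilon(s):=s\,\chi(s/\varepsilon)\quad\text{on }\overline{e},\qquad \psi_\varepsilon\equiv 0\quad\text{elsewhere on }\overline\frakE.
\end{equation*}
Then $\psi_\varepsilon$ vanishes at both endpoints of $\overline e$ and on all other edges, hence is continuous across every vertex and belongs to $C^1(\overline\frakE)\cap C({\frakG})$. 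A direct computation yields $\nabla\psi_\varepsilon(0)=1$, $\|\nabla\psi_\varepsilon\|_\infty\le 1+\|\chi'\|_\infty=:M$, $\|\psi_\varepsilon\|_\infty\le\varepsilon$, and $\nabla\psi_\varepsilon\to\mathbbm{1}_{\{p\}}$ pointwise on $\overline\frakE$ as $\varepsilon\to 0$.

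Testing Remark~\ref{rem:distributional}$(iii')$ with $\varphi(t,x)=\eta(t)\psi_\varepsilon(x)$ for an arbitrary $\eta\in C^1_c((0,T))$ then yields
\begin{equation*}
 \int_0^T\eta'(t)\int_\frakG\psi_\varepsilon\,\dd\mu_t\,\dd t
 +\int_0^T\eta(t)\int_{\overline\frakE}\nabla\psi_\varepsilon\cdot\D J_t\,\dd t=0.
\end{equation*}
The first summand is bounded in modulus by $\|\eta'\|_{L^1}\varepsilon$ and hence vanishes as $\varepsilon\to 0$. For the second, dominated convergence (first in $x$ against the finite measure $|J_t|$ using $|\nabla\psi_\varepsilon|\le M$, then in $t$ with the $L^1$-dominator $|\eta(t)|\,M\,|J_t|(\overline\frakE)$ supplied by Definition~\ref{def:dist_et_weak_solution1}$(ii)$) gives $\int_0^T\eta(t)J_t(\{p\})\,\dd t=0$. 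Since $\eta$ is arbitrary and $t\mapsto J_t(\{p\})$ is measurable by the standing assumption on weak solutions, we conclude $J_t(\{p\})=0$ for a.e.\ $t$.

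The main obstacle is conceptual rather than computational: one must recognise that the continuity condition across a vertex in no way constrains the edge-wise derivative at that vertex, which is what makes it possible to construct a uniformly bounded test function whose gradient concentrates at a single prescribed endpoint. Once this observation is in place, the rest is a routine double application of dominated convergence.
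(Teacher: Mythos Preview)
Your proof is correct and follows essentially the same approach as the paper: construct a family of test functions in $C^1(\overline\frakE)\cap C(\frakG)$ supported on a single edge, vanishing at the vertices, whose gradients are uniformly bounded and concentrate at a prescribed endpoint, then pass to the limit in the distributional form of the continuity equation. Your version is somewhat more detailed than the paper's (you make the time cutoff $\eta$ explicit, spell out both dominated-convergence steps, and justify the passage from $J_t(\{p\})=0$ to $|J_t|(\{p\})=0$ via the Jordan decomposition), but the idea is identical.
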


\begin{proof}
Fix a metric edge $e$ in $\overline\frakE$ and take $w \in \{e_{\rm init}, e_{\rm term}\}$. 
Without loss of generality we take 
	$w=e_{\rm init}$. 
Then we can construct a family of functions 
	$\varphi_\eps$ for $\eps\in(0,\ell_e)$
with the following properties: 
\begin{enumerate}[$(a)$]
	\item $\varphi_\eps\in C^1(\overline\frakE) \cap C(\frakG)$; 
	\item $\varphi_\eps \equiv 0$ on $\overline\frakE\setminus e$ and
	$\varphi_\eps\to 0$ uniformly on $\frakG$ as $\eps\to0$; 
	\item $|\nabla\varphi_\eps|=1$ on $(0,\eps)\subset e$ 
	and
	$|\nabla\varphi_\eps|\to0$ uniformly on compact subsets of $\overline\frakE\setminus\{w\}$. 
\end{enumerate}
For instance, we could set $\varphi^\varepsilon(x) := \indicator_{e}(x) \eta_\eps(x)$, where $\eta_\eps:\R\to\R$ is a $C^1$ approximation of the function $x\mapsto \big(x\wedge \frac{\eps(\ell_e-x)}{\ell_e-\eps}\big)\vee0$. 
Choosing \math{\varphi(t,x) = \varphi^\varepsilon(x)} in \eqref{eq:dist_et_weak_solution2}, we obtain by passing to the limit $\eps \to 0$ that $\int_0^T |J_t| (\{w\})\D t = 0$.
\end{proof}

\begin{lemma}[Weak and strong solutions]\label{lem:Lip} 
The following assertions hold:
\begin{enumerate}[(i)]
\item If $(\rho_t , U_t)_{t \in (0,T)}$ is a strong solution to the continuity equation, then the pair $(\mu_t , J_t)_{t \in (0,T)}$ defined by $\mu_t = \rho_t \lambda$ and $J_t = U_t \lambda$ is a weak solution to the continuity equation.
\item If $(\mu_t , J_t)_{t \in (0,T)}$ is a weak solution to the  continuity equation \eqref{eq:ce_weak1} such that the densities $\rho_t := \frac{\ddd \mu_t}{\ddd \lambda}$ and $U_t := \frac{\ddd J_t}{\ddd \lambda}$ exist for all times $t \in (0,T)$ and satisfy the regularity conditions \eqref{it:rho-reg} and \eqref{it:U-reg} of Definition \ref{def:CE-strong}, then $(\rho_t, U_t)_{t \in (0,T)}$ is a strong solution to the continuity equation.
\end{enumerate}
\end{lemma}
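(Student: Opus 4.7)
For part (i), the argument is essentially the motivating calculation carried out immediately before Definition \ref{def:dist_et_weak_solution1}, which I would formalise as follows. Fix a test function $\psi \in C^1(\overline\frakE) \cap C(\frakG)$. The condition \eqref{it:rho-reg} in Definition \ref{def:CE-strong} lets me differentiate $t \mapsto \int_\frakG \psi \rho_t \dd\lambda = \sum_{e \in \sE}\int_0^{\ell_e} \psi \rho_t \dd x$ under the integral sign on each edge. On each edge I substitute $\partial_t \rho_t = -\nabla U_t$ and integrate by parts, collecting the boundary contributions. Reindexing the boundary sum over vertices as in the paper's motivating computation gives
\begin{equation*}
\frac{\D}{\D t}\int_\frakG \psi \rho_t \dd \lambda
 = \int_{\overline\frakE} \nabla\psi \cdot U_t \dd\lambda
 + \sum_{w \in \sV} \psi(w)\sum_{e \in \sE_w}\iota_{ew}U_t(w_e).
\end{equation*}
The vertex sum vanishes because $\psi$ is continuous on $\frakG$ (so that $\psi(w)$ makes sense) and by the node condition \eqref{it:node}. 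Absolute continuity of $t \mapsto \int\psi\dd\mu_t$ and local integrability of $t \mapsto |J_t|(\overline\frakE) = \int|U_t|\dd\lambda$ follow from the joint regularity implicit in Definition \ref{def:CE-strong} together with the estimate $|J_t|(\overline\frakE) \leq \lambda(\overline\frakE)\|U_t\|_\infty$, verifying the three conditions of Definition \ref{def:dist_et_weak_solution1}.

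For part (ii), I would proceed by localising the weak formulation twice: first to individual open edges to recover the PDE, and then to individual vertices to recover the Kirchhoff condition. Fix a metric edge $e$ and take $\psi \in C^1_c((0,\ell_e))$, extended by zero on $\frakG\setminus e$; this is admissible since it belongs to $C^1(\overline\frakE) \cap C(\frakG)$. Applying \eqref{eq:ce_weak1} and using the assumed densities, I integrate by parts in $x$ (no boundary terms, since $\psi$ is compactly supported in the open edge) and differentiate under the integral in $t$ (licit by \eqref{it:rho-reg}) to obtain
\begin{equation*}
\int_0^{\ell_e} \psi \bigl(\partial_t\rho_t + \nabla U_t\bigr)\dd x = 0
\end{equation*}
for a.e.\ $t$; by continuity in $t$ of both sides (from \eqref{it:rho-reg} and \eqref{it:U-reg}) the identity holds for every $t \in (0,T)$. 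The fundamental lemma of calculus of variations then yields the pointwise continuity equation $\partial_t \rho_t + \nabla U_t = 0$ on each open edge.

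With the edgewise PDE in hand, I would extract the Kirchhoff condition by choosing, for each vertex $w_0 \in \sV$, a test function $\psi_{w_0} \in C^1(\overline\frakE)\cap C(\frakG)$ supported in a small neighbourhood of $w_0$ with $\psi_{w_0}(w_0) = 1$ and $\psi_{w_0}(w) = 0$ for every other vertex $w \ne w_0$. Such a function is built by pasting together, on each edge adjacent to $w_0$, a $C^1$ plateau cutoff near the corresponding endpoint. Applying \eqref{eq:ce_weak1} with $\psi_{w_0}$, differentiating under the integral, substituting the pointwise continuity equation just established, and integrating by parts on each edge produces the identity
\begin{equation*}
\int_{\overline\frakE}\nabla\psi_{w_0}\cdot U_t\dd\lambda
 = \int_{\overline\frakE}\nabla\psi_{w_0}\cdot U_t\dd\lambda + \sum_{w \in \sV}\psi_{w_0}(w)\sum_{e \in \sE_w}\iota_{ew}U_t(w_e),
\end{equation*}
so that the vertex sum collapses to $\sum_{e \in \sE_{w_0}}\iota_{e w_0}U_t((w_0)_e) = 0$, which is exactly \eqref{it:node} at $w_0$. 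The main obstacle is the bookkeeping of boundary terms and the correct reindexing using the signed incidences $\iota_{ew}$; once this is carried out carefully, together with the construction of admissible vertex cutoffs, both implications reduce to integration by parts on each interval.
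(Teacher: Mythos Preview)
Your proposal is correct and is precisely the integration-by-parts argument the paper has in mind; the paper's own proof is the single sentence ``Both claims are straightforward consequences of integration by parts on each metric edge in $\frakE$.'' You have simply spelled out the details (differentiation under the integral, edge-localised test functions to recover the PDE, and vertex cutoffs to recover the Kirchhoff condition) that the paper leaves implicit.
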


\begin{proof}
Both claims are straightforward consequences of integration by parts on each metric edge in $\frakE$. 
\end{proof}

\subsection{Characterisation of absolutely continuous curves}

Let \math{(X,d)} be a metric space 
and let $T > 0$.

\begin{definition}
For $p \geq 1$, we say that a curve \math{\gamma: (0,T) \to X} is \emph{$p$-absolutely continuous} if there exists a function \math{g \in L^p(0,T)} such that
\begin{equation}\label{eq:AC_def1}
d(\gamma_s, \gamma_t) \leq \int_s^t g_r \dd r \qquad \forall s,t \in (0,T): s \leq t.
\end{equation}
\end{definition}

The class of $p$-absolutely continuous curves is denoted by $AC^p\big((a,b);(X,d)\big)$. 
For $p=1$ we simply drop $p$ in the notation. 
The notion of \emph{locally}
$p$-absolutely continuous curve is defined analogously.

\begin{proposition}
Let $p \geq 1$. 
For every $p$-absolutely continuous curve \math{\gamma: (0,T) \to X}, the \emph{metric derivative} defined by
\begin{equation*}
	\abs{\dot \gamma}(t) 
		:= 
	\lim_{s \to t}	
	 \frac{d(\gamma_s, \gamma_t)}{\abs{s-t}}
\end{equation*}
exists for a.e.\ \math{t \in (0,T)} and $t\mapsto 	\abs{\dot \gamma}(t) $ belongs to \math{L^p(0,T)}. 
The metric derivative \math{\abs{\dot \gamma}(t)} is an admissible integrand in the right-hand side of \eqref{eq:AC_def1}.
Moreover, any other admissible integrand \math{g \in L^p(0,T)} satisfies
$\abs{\dot \gamma}(t) \leq g(t)$ for a.e.\ $t \in (0,T)$.
\end{proposition}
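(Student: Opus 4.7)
My plan is to construct the smallest admissible integrand explicitly and then identify it almost everywhere with the metric derivative, following the classical strategy (cf.\ \cite[Thm.~1.1.2]{AGS}). Fix a countable dense set $D\subset(0,T)$.

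\emph{Step 1 (auxiliary scalar functions).} For each $s\in D$, put $\varphi_s(t):=d(\gamma_s,\gamma_t)$. The reverse triangle inequality together with \eqref{eq:AC_def1} yields $|\varphi_s(t)-\varphi_s(r)|\le d(\gamma_r,\gamma_t)\le\int_r^t g(u)\dd u$ for $r\le t$, so each $\varphi_s$ is absolutely continuous on $(0,T)$. Hence $\varphi_s'(t)$ exists for a.e.\ $t$, with $|\varphi_s'(t)|\le g(t)$ a.e. Defining
\begin{equation*}
	m(t):=\sup_{s\in D}|\varphi_s'(t)|,
\end{equation*}
one obtains a measurable function (countable supremum) satisfying $m\le g$ a.e., so $m\in L^p(0,T)$.

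\emph{Step 2 (admissibility of $m$).} Fix $r<t$ in $(0,T)$ and $s\in D$. By the triangle inequality and Step 1,
\begin{equation*}
d(\gamma_r,\gamma_t)\le d(\gamma_r,\gamma_s)+\varphi_s(t)
\le 2\,d(\gamma_r,\gamma_s)+\varphi_s(r)+\int_r^t|\varphi_s'(u)|\dd u
\le 2\,d(\gamma_r,\gamma_s)+\int_r^t m(u)\dd u,
\end{equation*}
using that $\varphi_s(r)=d(\gamma_s,\gamma_r)$. Choosing a sequence $s_n\in D$ with $s_n\to r$ (possible since $D$ is dense) and using the continuity of $\gamma$ (a consequence of absolute continuity), we conclude $d(\gamma_r,\gamma_t)\le\int_r^t m(u)\dd u$, so $m$ is admissible.

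\emph{Step 3 (identification with the metric derivative).} By admissibility of $m$ and Lebesgue differentiation, at every Lebesgue point of $m$,
\begin{equation*}
\limsup_{r\to t}\frac{d(\gamma_r,\gamma_t)}{|r-t|}\le \lim_{r\to t}\frac{1}{|r-t|}\int_{r\wedge t}^{r\vee t} m(u)\dd u = m(t).
\end{equation*}
Conversely, for each $s\in D$ at points of differentiability of $\varphi_s$,
\begin{equation*}
|\varphi_s'(t)|=\lim_{r\to t}\frac{|d(\gamma_s,\gamma_r)-d(\gamma_s,\gamma_t)|}{|r-t|}\le\liminf_{r\to t}\frac{d(\gamma_r,\gamma_t)}{|r-t|};
\end{equation*}
taking the supremum over $s\in D$ gives $m(t)\le\liminf_{r\to t}d(\gamma_r,\gamma_t)/|r-t|$ a.e. Combining these, the limit $|\dot\gamma|(t):=\lim_{r\to t}d(\gamma_r,\gamma_t)/|r-t|$ exists and coincides with $m(t)$ for a.e.\ $t\in(0,T)$. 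Since $m\in L^p(0,T)$, is admissible, and satisfies $m\le g$ a.e.\ for any admissible $g$, all claims follow.

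The only delicate point I expect is Step 2: since the metric derivative is not yet known to exist, admissibility cannot be obtained by passing to the limit inside the integral; it has to be extracted by the density/triangle argument above, exploiting the absolute continuity of the real-valued distance functions $\varphi_s$ rather than of $\gamma$ itself.
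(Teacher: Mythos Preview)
Your proof is correct and follows exactly the classical argument from \cite[Theorem~1.1.2]{AGS}, which is precisely what the paper cites without reproducing. The three-step structure --- building the countable family $\varphi_s$, constructing the minimal integrand $m$ as a countable supremum of derivatives, and sandwiching the $\liminf$/$\limsup$ --- is the standard proof, and your execution (including the density-plus-continuity trick in Step 2 to absorb the $2\,d(\gamma_r,\gamma_s)$ term) is clean.
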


\begin{proof}
See, e.g., \cite[Theorem~1.1.2]{AGS}. 
\end{proof}

The next result relates the metric derivative of \math{t \mapsto \mu_t} to the \math{L^2(\mu_t)}-norm of the corresponding vector fields \math{v_t}.

\begin{theorem}[Absolutely continuity curves]
\label{thm:continuity1}
The following statements hold:
\begin{enumerate}[(i)]
\item \label{it:continuity1i}
 If \math{(\mu_t)_{t \in (0,T)}} is absolutely continuous in \math{(\calP(\frakG),W_2)}, then there exists, for a.e.\ \math{t \in (0,T)}, a vector field \math{v_t \in L^2(\mu_t)} such that \math{\Vert v_t \Vert_{L^2(\mu_t)} \leq \vert \dot \mu \vert(t)} and $(\mu_t, v_t \mu_t)_{t \in (0,T)}$ is a weak solution to the continuity equation \eqref{eq:ce_weak1}.

\item \label{it:continuity1ii}
Conversely, if \math{(\mu_t, v_t \mu_t)_{t \in (0,T)}} is a weak solution to the continuity equation \eqref{eq:ce_weak1} satisfying \math{\int_0^1 \Vert v_t \Vert_{L^2(\mu_t)} \dd t < + \infty}, then \math{(\mu_t)_{t \in (0,T)}} is an absolutely continuous curve in \math{(\calP(\frakG), W_2)} and $\vert \dot \mu \vert(t) \leq \Vert v_t \Vert_{L^2(\mu_t)}$ for a.e.\ $t \in (0,T)$. 
\end{enumerate}
\end{theorem}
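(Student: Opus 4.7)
The plan is to adapt the Hahn--Banach duality scheme of Ambrosio--Gigli--Savar\'e to the graph setting, handling the Kirchhoff-type node condition (\ref{it:node}) of Definition~\ref{def:CE-strong} throughout. For part (\ref{it:continuity1i}), fix $s<t$, a $W_2$-optimal plan $\sigma_{s,t}\in\Pi(\mu_s,\mu_t)$, and a Borel selection $(\gamma^{x,y}_r)_{r\in[0,1]}$ of constant-speed geodesics in $\frakG$, each piecewise $C^1$ on finitely many edges. For a test function $\psi\in C^1(\overline\frakE)\cap C(\frakG)$, the fundamental theorem of calculus on each edge segment of each geodesic yields
\[
\int_\frakG \psi\,\dd\mu_t - \int_\frakG \psi\,\dd\mu_s
 = \int_{\frakG\times\frakG}\int_0^1 \nabla\psi(\gamma^{x,y}_r)\cdot\dot\gamma^{x,y}_r \,\dd r\,\dd\sigma_{s,t}(x,y).
\]
Two applications of Cauchy--Schwarz, using $|\dot\gamma^{x,y}_r|\equiv\sd(x,y)$, bound the left-hand side by $W_2(\mu_s,\mu_t)\bigl(\int_0^1\|\nabla\psi\|^2_{L^2(m_r^{s,t})}\,\dd r\bigr)^{1/2}$ with $m_r^{s,t}:=(\gamma^{\cdot,\cdot}_r)_\#\sigma_{s,t}$. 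Since $m_r^{s,t}\rightharpoonup\mu_t$ for each $r$ as $s\to t$, dividing by $|t-s|$ and passing to the limit produces, for a.e.\ $t$, a bounded linear functional of norm $\leq |\dot\mu|(t)$ on the subspace $\{\nabla\psi:\psi\in C^1(\overline\frakE)\cap C(\frakG)\}\subset L^2(\mu_t)$. Hahn--Banach followed by Riesz representation then yields the required $v_t$, and $(\mu_t,v_t\mu_t)$ solves \eqref{eq:ce_weak1} by construction.

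For part (\ref{it:continuity1ii}), the obstacle is that a flow map for $(v_t)$ need not exist on $\frakG$, so the Euclidean push-forward argument is unavailable. I propose a Hopf--Lax duality argument instead. For $\varphi\in C^1(\overline\frakE)\cap C(\frakG)$ set $Q_r\varphi(x):=\inf_{y\in\frakG}\{\varphi(y)+\sd(x,y)^2/(2r)\}$; working on each edge, $Q_r\varphi$ is Lipschitz and satisfies the Hamilton--Jacobi sub-solution inequality $\partial_r Q_r\varphi+\tfrac12|\nabla Q_r\varphi|^2\leq 0$ a.e.\ on $\frakE$. After a regularisation of $Q_{r-s}\varphi$ by approximants in $C^1(\overline\frakE)\cap C(\frakG)$ compatible with (\ref{it:node}), using this family as a time-dependent test in \eqref{eq:ce_weak1} and applying Young's inequality $\nabla Q\cdot v\leq\tfrac12|\nabla Q|^2+\tfrac12|v|^2$ gives
\[
\int Q_{t-s}\varphi\,\dd\mu_t - \int\varphi\,\dd\mu_s \leq \tfrac12 \int_s^t \|v_r\|^2_{L^2(\mu_r)}\,\dd r.
\]
Taking $\sup_\varphi$ and invoking Kantorovich duality (Proposition~\ref{prop:Kantorovich-duality}) for the cost $c(x,y)=\sd(x,y)^2/(2(t-s))$ yields $W_2^2(\mu_s,\mu_t)\leq (t-s)\int_s^t\|v_r\|^2_{L^2(\mu_r)}\,\dd r$, whence $\mu\in AC^2$ and $|\dot\mu|(t)\leq\|v_t\|_{L^2(\mu_t)}$ by Cauchy--Schwarz.

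The key difficulty is the regularisation step in part (\ref{it:continuity1ii}): $Q_r\varphi$ is only Lipschitz, with possible gradient jumps across its in-edge cut locus and, more seriously, across the vertices. Producing $C^1(\overline\frakE)\cap C(\frakG)$ approximants that both preserve the Hamilton--Jacobi sub-solution inequality in the limit and interact correctly with the node condition (\ref{it:node})---so that no spurious boundary term at the vertices appears after integration by parts---is exactly the kind of careful regularisation of continuity-equation solutions highlighted in the introduction, and is the step where the one-dimensional structure of each edge together with the Kirchhoff balance must be exploited in earnest.
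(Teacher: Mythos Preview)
Your overall strategy matches the paper's: Riesz representation for part~(\ref{it:continuity1i}), Hopf--Lax duality for part~(\ref{it:continuity1ii}). But both halves of your sketch have gaps at exactly the places where the metric-graph structure bites.

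\smallskip
\textbf{Part~(\ref{it:continuity1i}).} Your geodesic-interpolation bound yields a functional dominated by $|\dot\mu|(t)\cdot\bigl(\int_0^1\|\nabla\psi\|_{L^2(m_r^{s,t})}^2\,\dd r\bigr)^{1/2}$, and you want to pass to the limit $s\to t$ to get $\|\nabla\psi\|_{L^2(\mu_t)}$. The difficulty is that $|\nabla\psi|^2$ is continuous on each closed edge but \emph{not} on~$\frakG$: at a vertex it is multi-valued. If $\mu_t$ charges a vertex, the expression $\|\nabla\psi\|_{L^2(\mu_t)}$ is ambiguous, and weak convergence $m_r^{s,t}\rightharpoonup\mu_t$ does not give the desired limit. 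The paper sidesteps this by replacing $|\nabla\psi|$ with the local Lipschitz constant $\lip(\psi)$, which is a bona fide upper-semicontinuous function on $\frakG$, and by working with the measure $\overline\mu_t$ on $\overline\frakE$ that duplicates vertex mass on each incident edge. The Riesz argument is then carried out on the space--time domain $(0,T)\times\overline\frakE$ in one stroke, rather than pointwise in $t$; this also avoids the measurable-selection issue your per-$t$ construction would face.

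\smallskip
\textbf{Part~(\ref{it:continuity1ii}).} You correctly diagnose the obstacle and propose to regularise $Q_r\varphi$ so as to feed it into \eqref{eq:ce_weak1}. But the proposal stops precisely where the work begins: you do not say how to build $C^1(\overline\frakE)\cap C(\frakG)$ approximants of $Q_r\varphi$ whose gradients converge in $L^2(\mu_t)$ \emph{and} for which the Hamilton--Jacobi sub-solution inequality survives in the limit. The paper's resolution is dual to yours: instead of smoothing the test function alone, it smooths the \emph{measures}. Concretely, it builds a supergraph $\frakG_{\rm ext}\supseteq\frakG$ by attaching a short auxiliary edge at each vertex, and defines an averaging operator whose adjoint sends $\mu_t\mapsto\mu_t^\varepsilon$, $J_t\mapsto J_t^\varepsilon$ on $\frakG_{\rm ext}$. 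Two features of this construction are decisive: (a) $\mu_t^\varepsilon$ has density bounded by $1/(2\varepsilon)$, so the discrete-in-time measures in the telescoping Hopf--Lax sum are uniformly integrable and one can pass to the limit via Dunford--Pettis, despite $\lip(Q_t\varphi)$ being merely upper-semicontinuous; and (b) the Benamou--Brenier kinetic energy contracts under the regularisation, $\int|v_t^\varepsilon|^2\,\dd\mu_t^\varepsilon\le\int|v_t|^2\,\dd\mu_t$, so no energy is lost. With these in hand, the paper runs the telescoping argument on $\frakG_{\rm ext}$ and sends $\varepsilon\to0$ at the end. Your sketch does not supply an analogue of either (a) or (b), and without them the limit passages in the Hopf--Lax argument are unjustified.
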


\begin{proof}[Proof of \eqref{it:continuity1i}]
We adapt the proof of \cite[Theorem 8.3.1]{AGS} to the setting of metric graphs.

The idea of the proof is as follows: 
On the space-time domain $Q := (0,T) \times \frakG$ we consider the Borel measure $\bfmu := \int_0^T \delta_t \otimes \mu_t \dd t$ whose disintegration with respect to the Lebesgue measure on \math{(0,T)} is given by $(\mu_t)_{t \in (0,T)}$. 
To deal with the fact that gradients of smooth functions are multi-valued at the nodes, we define $\overline\mu_t \in \calM_+(\overline \frakE)$ by $\overline\mu_t(A) := \sum_{e \in \sE} \mu_t(A \cap \overline e)$ for every Borel set $A \subseteq \overline\frakE$.
We then set $\overline Q := (0,T) \times \overline\frakE$ and define $\overline \bfmu \in \calM_+(\overline Q)$ by $\overline \bfmu := \int_0^T \delta_t \otimes \overline\mu_t \dd t$. (Note that mass at the nodes is counted multiple times).
Consider the linear spaces of functions $\cT$ and $\cV$ given by 
\begin{align*}
	\cT & := \Span 
		\bigg\{ 
		(0,T) \times \frakG \ni (t,x) 
			\mapsto
		a(t) \varphi(x) 
			\ : \ 
		a \in C_c^1(0,T), \, \varphi \in 
		C^1(\overline \frakE) \cap C(\frakG)
		\bigg\}, \\
	\cV & := \bigg\{ 
		(0,T) \times \overline\frakE \ni (t,x) 
			\mapsto
				\nabla_x \Phi(t,x) 
			\ : \ 
		\Phi \in \cT
		\bigg\}.
\end{align*}
The strategy is to show that the linear functional \math{L: \cV \to \reals} given by
\begin{equation*}
	L(a \otimes \nabla \varphi) 
		:= - \int_Q \dot a(t) \varphi(x) 
		\dd \bfmu(x,t), 
 \end{equation*}
is well-defined and $L^2(\overline Q, \overline\bfmu)$-bounded with $\norm{L}^2 \leq \int_0^T \abs{\dot \mu}^2(t) \dd t$. 
Once this is proved, 
the Riesz Representation Theorem yields the existence of a vector field $\pmb v$ in $\overline{\cV} \subseteq L^2(\overline Q, \overline \bfmu)$ such that $\|\pmb v\|^2_{L^2(\overline \bfmu)} \leq \int_0^T \abs{\dot \mu}^2(t)\dd t$ and
\begin{equation}\label{eq:metric_est3}
	 -\int_0^T \dot a(t) \int_\frakG \varphi(x) \D \mu_t(x) \dd t 
	= L(a \otimes \nabla \varphi)
	= \int_0^T a(t) \int_{\overline{\frakE}} \nabla \varphi(x) v_t(x)
			\dd \overline{\mu}_t(x) \dd t
\end{equation}
for $v_t := \pmb v(t, \cdot)$ and all $a \in C_c^1(0,T)$ and $\varphi \in C^1(\overline \frakE) \cap C(\frakG)$.

Once this is done, we show that the momentum vector field $\pmb J:= \pmb v \cdot \bfmu$ does not assign mass to boundary points in $\overline\frakE$, so that $\pmb v$ can be interpreted as an element in $L^2(Q,\bfmu)$ and the integration over vector fields can be restricted to $\frakE$.

\medskip

\emph{Step 1.} \ 
Fix a test function $\varphi \in C^1(\overline \frakE) \cap C(\frakG)$
and consider the bounded and upper semicontinuous function $H : \frakG \times \frakG \to \R$ given by
\begin{equation*}
H(x,y):= \begin{cases} \displaystyle
		\lip(\varphi)(x) &\text{if } x=y, \\
		\displaystyle \frac{\abs{\varphi(x) - \varphi(y)}}{\sd(x,y)} &\text{if } x \neq y,
\end{cases}
\end{equation*}
for \math{x,y \in \frakG}.
For $s, t \in (0,T)$, let $\sigma^{s \to t} \in \Pi(\mu_s, \mu_t)$ be an optimal plan. The Cauchy--Schwarz inequality yields
\begin{equation}\begin{aligned}
\label{eq:metric_est0}
	\bigg| \int_\frakG \varphi \dd \mu_s 
		 - \int_\frakG \varphi \dd \mu_t \bigg|
	& \leq \int_{\frakG \times \frakG} 
			\sd(x,y) H(x,y) \dd \sigma^{s \to t}(x,y) 
 \\	& \leq W_2(\mu_s, \mu_t) 
 			\bigg( 
			\int_{\frakG \times \frakG} H^2(x,y)
						 \dd \sigma^{s \to t}(x,y)
			\bigg)^{1/2}.
\end{aligned}\end{equation}
As $\varphi$ is globally Lipschitz on \math{\frakG}, we obtain
\begin{equation*}
\Bigl\vert \int_\frakG \varphi \dd \mu_s - \int_\frakG \varphi \dd \mu_t \Bigr\vert \leq \Lip(\varphi) W_2(\mu_s,\mu_t)
\end{equation*}
and infer that the mapping $t \mapsto \int_\frakG \varphi \dd \mu_t$ is absolutely continuous, hence, differentiable outside of a null set $N_\varphi \subseteq (0,T)$.

Fix $t \in (0,T)$ and take a sequence \math{\{s_n\}_{n \in \naturalnumbers}} converging to \math{t}. 
Since $\{\mu_{s_n}\}$ is weakly convergent, this sequence is tight. Consequently, $\{\sigma^{s_n \to t}\}_{n \in \naturalnumbers}$ is tight as well, and we may extract a subsequence converging weakly to some $\hat\sigma \in \calP(\frakG \times \frakG)$. It readily follows that $\hat\sigma \in \Pi(\mu_t, \mu_t)$. Moreover, along the convergent subsequence, we have 
\begin{align*}
	\int_{\frakG \times \frakG} \sd^2(x,y) \dd \hat\sigma(x,y)
	\leq \liminf_{n \to \infty}
	\int_{\frakG \times \frakG} \sd^2(x,y) \dd \sigma^{s_n \to t}(x,y)	
	 = \liminf_{n \to \infty} W_2^2(\mu_{s_n}, \mu_t)
	 = 0,
\end{align*}
which implies that $\hat\sigma = (\Id, \Id)_\# \mu_t$.

Using this result and the upper-semicontinuity of $H$, it follows from \eqref{eq:metric_est0} that
\begin{equation}\begin{aligned}\label{eq:metric_est1}
	\limsup_{s \to t} \bigg|
		\frac{ 
			 \int_\frakG \varphi \dd \mu_s 
			- \int_\frakG \varphi \dd \mu_t 
			 }{s-t} \bigg|
	& \leq 
		\abs{\dot \mu}(t) 
			\limsup_{s \to t}
			\bigg( 
				\int_{\frakG \times \frakG} H^2(x,y)
						 \dd \sigma^{s \to t}(x,y)
			\bigg)^{1/2}	
\\ & \leq \abs{\dot \mu}(t) 
			\cdot \norm{\lip(\varphi)}_{L^2(\mu_t)}.
\end{aligned}\end{equation}

\medskip
\emph{Step 2.} \ 
Take $\Phi \in \cT$. 
Using dominated convergence, Fatou's Lemma, and \eqref{eq:metric_est1}, we obtain
\begin{equation}\begin{aligned}
\label{eq:metric_est2}
	\bigg| \int_Q \frac{\D}{\D t} \Phi(x,t) \dd \bfmu(x,t) \bigg|
	& = \lim_{h \searrow 0} 
		\bigg| \frac{1}{h} 
			\int_Q \Phi(x, t-h) - \Phi(x,t) \dd \bfmu(x,t)
	 	\bigg|
 \\ & = \lim_{h \searrow 0} 
 		\bigg| \frac{1}{h} \int_0^T
			\bigg(
				\int_\frakG \Phi(x,t) \dd \mu_{t+h}(x) 
				 - \int_\frakG \Phi(x,t) \dd \mu_t(x) 
			\bigg) \dd t
		\bigg|
 \\ & \leq \int_0^T \abs{\dot \mu}(t)
 		 \cdot \norm{\lip_x(\Phi)(\cdot,t)}_{L^2(\mu_t)} \dd t 
 \\ &	\leq \bigg( 
				\int_0^T \abs{\dot \mu}^2(t) \dd t 
		 \bigg)^{1/2} 
		 \bigg( 
		 		\int_Q \abs{\lip_x(\Phi)(x,t)}^2 \dd \bfmu(x,t) 	
		 \Bigr)^{1/2}.
\end{aligned}\end{equation}
Since $\int_Q |\lip_x(\Phi)(x,t)|^2 \dd \bfmu(x,t) \leq \int_{\overline Q} |\nabla \Phi (x,t)|^2 \dd \overline\bfmu(x,t)$, we infer that $L$ is well-defined and extends to a bounded linear functional on the closure of $\cV$ in $L^2(\overline Q, \overline \bfmu)$ with $\norm{L}^2 \leq \int_0^T \abs{\dot \mu}^2(t) \dd t$, 
which allows us to apply the Riesz Representation Theorem, as announced above.

In particular, \eqref{eq:metric_est3} implies that \math{t \mapsto \int_{\frakE} \nabla \varphi \cdot v_t \dd \mu_t} is a distributional derivative for \math{t \mapsto \int_\frakG \varphi \dd \mu_t}. Since the latter function is absolutely continuous and, therefore, belongs to the Sobolev space \math{W^{1,1}(0,T)}, we obtain 
\begin{equation}\label{eq:metric_est4}
\frac{\D}{\D t} \int_\frakG \varphi \dd \mu_t = \int_{\overline\frakE} \nabla \varphi \cdot v_t \dd \mu_t \qquad \text{for a.e.\ } t \in (0,T).
\end{equation}
We conclude that \math{(\mu_t, v_t)_{t \in (0,T)}} solves the continuity equation in the weak sense. Lemma \ref{lem:no-momentum-vertices} implies that for a.e.\ $t$ the momentum field \math{J_t := v_t \cdot \overline\mu_t} does not give mass to any boundary point in \math{\overline{\frakE}} .
Consequently, the spatial domain of integration on the right-hand side of \eqref{eq:metric_est3} may be restricted to \math{\frakE}.
\medskip

\emph{Step 4.} \ 
It remains to verify (by a standard argument) the inequality relating the \math{L^2(\mu_t)}-norm of the vector field \math{v_t} to the metric derivative of \math{\mu_t}. 

For this purpose, fix a sequence \math{(\pmb \varpi_i)_{i \in \naturalnumbers}} of functions \math{\pmb \varpi_i \in \cV} converging to \math{\pmb v} in \math{L^2(\overline{\bfmu})} as \math{i \to \infty}. 
For every compact interval \math{I \subseteq (0,T)} and \math{a \in C^1(0,T)} satisfying \math{0 \leq a \leq 1} and \math{\supp a = I}, we then obtain
\begin{align*}
\int_{Q} a(t) \abs{\pmb v(x,t)}^2 \dd \bfmu(x,t) 
		& = \lim_{i \to \infty}
				 \int_{Q} a(t) \pmb \varpi_i(x,t) \pmb v(x,t) \dd \bfmu(x,t) 
	 \\	& = \lim_{i \to \infty} L(a \pmb \varpi_i ) 
	 \leq 	\Bigl(\int_0^T \indicator_I \abs{\dot \mu_t}^2 \dd t \Bigr)^{1/2} 		 	\lim_{i \to \infty} 
	 		\Bigl( \int_{Q} \indicator_I 
					\vert \pmb \varpi_i \vert^2 \dd \bfmu \Bigr)^{1/2} 
	\\ & = \Bigl(\int_0^T \indicator_I 
			\abs{\dot \mu}^2(t) \dd t \Bigr)^{1/2} 
			\Bigl( \int_{Q} \indicator_I \abs{v}^2 \dd \bfmu \Bigr)^{1/2}.
\end{align*}
Letting $\norm{a - \indicator_I}_\infty \to 0$, this inequality implies
\begin{equation*}
	\int_I \int_\frakE \abs{v_t}^2 \dd \mu_t \dd t
		 \leq \int_I \abs{\dot \mu}^2(t) \dd t.
\end{equation*}
Since \math{I \subseteq (0,T)} is arbitrary, this implies that \math{\norm{v_t}_{L^2(\mu_t)} \leq \abs{\dot \mu}(t)} for a.e.\ \math{t \in (0,T)}. 
\end{proof}

\subsection{Regularisation of solutions to the continuity equation}\label{subsec:reg_CE}

Next we introduce a suitable spatial regularisation procedure for solutions to the continuity equation. This will be crucial in the proof of the second part of Theorem \ref{thm:continuity1}. 

Let $\eps > 0$ be sufficiently small, i.e.,
	 such that \math{2\varepsilon} is strictly smaller than the length of every edge in \math{\sE}.
We then consider the supergraph \math{\frakG_\ext \supseteq \frakG} defined by adjoining an auxiliary edge \math{e^\ext_v} of length \math{2 \varepsilon} to each node \math{v \in \sV} (see Figure~\ref{fig:reg_curve1}).
The corresponding set of metric edges will be denoted by \math{\frakE_\ext \supset \frakE}.

We next define a regularisation procedure for functions based on averaging.
The crucial feature here is that non-centred averages are used, to ensure that the regularised function is continuous.

In the definition below, 
we parametrise each edge \math{e=(e_\init, e_\term) \in \sE} using the interval \math{(-\frac{\ell_e}{2}, \frac{\ell_e}{2})} instead of \math{(0,\ell_e)}. 
The auxiliary edges \math{e^\ext_{e_\init}} and \math{e^\ext_{e_\term}} will then be identified with the intervals \math{(-\frac{\ell_e}{2} - 2\varepsilon, - \frac{\ell_e}{2})} and \math{( \frac{\ell_e}{2}, \frac{\ell_e}{2} + 2 \varepsilon)}, respectively.
We stress that for each vertex $v$, there is only one additional edge, but we use several different parametrisations for it -- one for each edge incident in $v$.

\begin{figure}
	\includegraphics[width=0.28\textwidth]{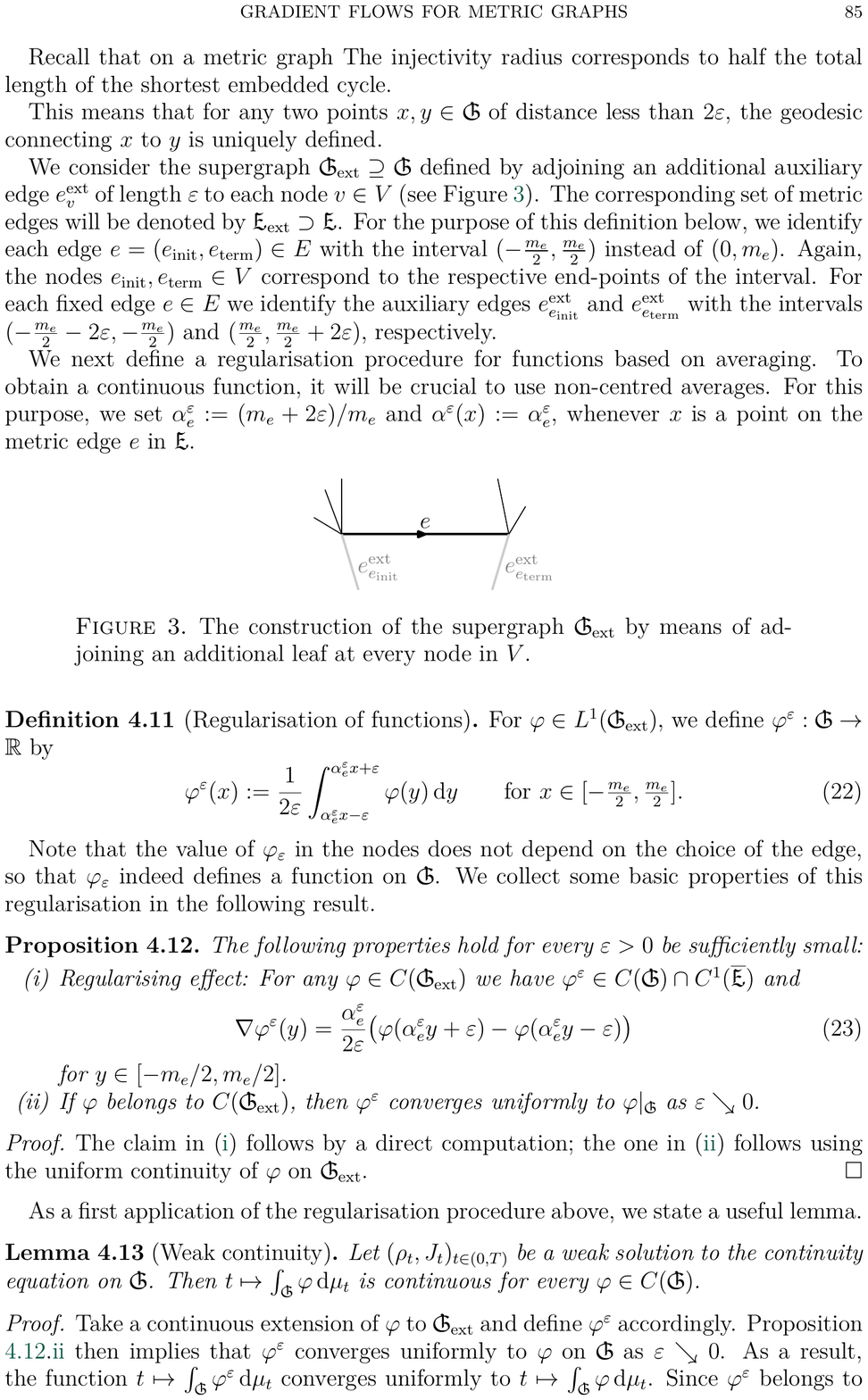}
	\caption{The supergraph \math{\frakG_\ext} is constructed by adjoining an additional leaf at every node in \math{V}.}
	\label{fig:reg_curve1}
\end{figure}

\begin{definition}[Regularisation of functions]\label{def:regularised_curves1}
For \math{\varphi \in L^1(\frakG_\ext)}, we define \math{\varphi^\varepsilon: \frakG \to \reals} by
\begin{equation}\label{eq:kernel_approx1_function1}
\varphi^\varepsilon(x) := \frac{1}{2 \varepsilon} 
		\int_{\alpha^\varepsilon_e x - \varepsilon}
		 ^{\alpha^\varepsilon_e x+ \varepsilon} 
		 	\varphi(y) \dd y 
	\qquad \text{for } x \in e=[-\tfrac{\ell_e}{2}, \tfrac{\ell_e}{2}],
\end{equation}
where \math{\alpha^\varepsilon_e := (\ell_e+2 \varepsilon)/\ell_e}. 
We write $\alpha^\varepsilon(x) := \alpha^\varepsilon_e$, whenever $x \in \frakE$ is a point on the metric edge $e$.
\end{definition}

Note that the value of $\varphi_\varepsilon$ in each of the nodes depends only on data on the corresponding auxiliary edge. In particular, the value at the nodes does not depend on the choice of the edge, so that $\varphi_\varepsilon$ indeed defines a function on $\frakG$. 
We collect some basic properties of this regularisation in the following result.

\begin{proposition}\label{prop:function-reg}
The following properties hold for every $\eps > 0$  sufficiently small:
\begin{enumerate}[(i)]
\item \label{it:fct-reg}
 Regularising effect: For any \math{\varphi \in C(\frakG_\ext)} we have $\varphi^\varepsilon 
 \in 
 C^1(\overline\frakE) 
 \cap
C({\frakG})$ and
\begin{equation}\label{eq:regularised_diff_quotient1}
\nabla \varphi^\varepsilon(y) = \frac{\alpha^\varepsilon_e}{2 \varepsilon} \bigl( \varphi(\alpha^\varepsilon_e y+ \varepsilon) - \varphi(\alpha^\varepsilon_e y- \varepsilon) \bigr) 
\end{equation}
for $y \in [-\ell_e/2, \ell_e/2]$.
\item \label{it:fct-conv}
 If \math{\varphi} belongs to \math{C(\frakG_\ext)}, then \math{\varphi^\varepsilon} converges uniformly to \math{\varphi \vert_\frakG} as \math{\varepsilon \searrow 0}. 
\end{enumerate}
\end{proposition}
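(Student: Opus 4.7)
The plan is to treat the two parts separately, with part (i) being the main technical step.

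For (i), I would first establish the derivative formula \eqref{eq:regularised_diff_quotient1} by differentiating \eqref{eq:kernel_approx1_function1} under the integral sign: since $\varphi$ is continuous on $\frakG_\ext$ and the endpoints of the integration window depend linearly on $x$ through the factor $\alpha^\varepsilon_e$, the fundamental theorem of calculus applied on the interval $[-\ell_e/2, \ell_e/2]$ yields the formula directly, and continuity of the two boundary values shows $\varphi^\varepsilon|_{\overline e}\in C^1(\overline e)$. The smallness hypothesis on $\varepsilon$ is exactly what guarantees that $\alpha^\varepsilon_e y \pm \varepsilon$ stays inside the single extended edge $e \cup e^\ext_{e_\init} \cup e^\ext_{e_\term}$, so that the integrand is well-defined from data on $\frakG_\ext$.

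The main obstacle will be proving continuity of $\varphi^\varepsilon$ across each vertex $v\in\sV$, i.e.\ that the expression \eqref{eq:kernel_approx1_function1} produces the same value at $v$ regardless of which incident edge $e$ is used for its computation. This is precisely where the non-centred choice of average matters. For an edge $e$ with $v = e_\init$, so that $v$ corresponds to $x = -\ell_e/2$ in the chosen parametrisation, a direct computation gives
\[
\alpha^\varepsilon_e \cdot (-\tfrac{\ell_e}{2}) - \varepsilon = -\tfrac{\ell_e}{2} - 2\varepsilon,\qquad \alpha^\varepsilon_e \cdot (-\tfrac{\ell_e}{2}) + \varepsilon = -\tfrac{\ell_e}{2},
\]
so that the averaging window at $v$ coincides exactly with the auxiliary edge $e^\ext_v$. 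Hence $\varphi^\varepsilon(v) = \frac{1}{2\varepsilon}\int_{e^\ext_v}\varphi\,\D y$, which is a quantity attached to $v$ alone and independent of the incident edge $e$; the symmetric computation for $v = e_\term$ yields the same integral. This, combined with continuity on each open edge, yields $\varphi^\varepsilon \in C(\frakG)\cap C^1(\overline\frakE)$.

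For (ii), the idea is to exploit uniform continuity of $\varphi$ on the compact metric space $\frakG_\ext$, with modulus of continuity $\omega_\varphi$. The elementary bound
\[
|\alpha^\varepsilon_e x - x| \;=\; |x|\cdot \tfrac{2\varepsilon}{\ell_e} \;\leq\; \varepsilon \qquad \text{for } x\in e,\; |x|\leq \tfrac{\ell_e}{2},
\]
shows that every point $y$ in the averaging window lies within distance $2\varepsilon$ from $x$ in $\frakG_\ext$. Therefore
\[
|\varphi^\varepsilon(x) - \varphi(x)| \;\leq\; \frac{1}{2\varepsilon}\int_{\alpha^\varepsilon_e x -\varepsilon}^{\alpha^\varepsilon_e x + \varepsilon} |\varphi(y) - \varphi(x)|\,\D y \;\leq\; \omega_\varphi(2\varepsilon),
\]
uniformly in $x\in\frakG$, which gives the claimed uniform convergence as $\varepsilon\searrow 0$.
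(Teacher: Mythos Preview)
Your proposal is correct and follows exactly the approach the paper takes: the paper's proof reads in its entirety ``(i) follows by direct computation; (ii) follows using the uniform continuity of $\varphi$ on $\frakG_\ext$,'' and you have simply supplied the details of that direct computation (the fundamental theorem of calculus for the derivative formula, and the key observation that at a vertex the averaging window coincides with the auxiliary edge $e^\ext_v$, making the value edge-independent) and of the uniform-continuity argument (the $2\varepsilon$ bound on the displacement of points in the averaging window).
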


\begin{proof}
\eqref{it:fct-reg} follows by direct computation; \eqref{it:fct-conv} follows using the uniform continuity of $\varphi$ on $\frakG_\ext$.
\end{proof}

\begin{lemma}[Weak continuity]\label{lem:regularised_CE1_weakly_continuous} 
Let \math{(\rho_t, J_t)_{t \in (0,T)}} be a weak solution to the continuity equation on \math{\frakG}. 
Then \math{t \mapsto \int_\frakG \varphi \dd \mu_t} is continuous for every \math{\varphi \in C(\frakG)}.
\end{lemma}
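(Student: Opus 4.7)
The strategy is to upgrade the absolute continuity of $t\mapsto\int_\frakG \psi\dd\mu_t$ from test functions $\psi\in C^1(\overline\frakE)\cap C(\frakG)$ (given by Definition \ref{def:dist_et_weak_solution1}(i)) to mere continuity of $t\mapsto\int_\frakG\varphi\dd\mu_t$ for arbitrary $\varphi\in C(\frakG)$, by exhibiting the latter as a locally uniform limit of the former. The regularisation operator from Definition \ref{def:regularised_curves1} together with Proposition \ref{prop:function-reg} is exactly tailored to this purpose.

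First I would observe that any $\varphi\in C(\frakG)$ admits a continuous extension $\tilde\varphi\in C(\frakG_\ext)$: on each auxiliary edge $e^\ext_v$ simply set $\tilde\varphi\equiv\varphi(v)$, which is continuous across the node $v$ and yields a continuous function on the supergraph. Then Proposition \ref{prop:function-reg}\eqref{it:fct-reg} produces $\tilde\varphi^\varepsilon\in C^1(\overline\frakE)\cap C(\frakG)$, and Proposition \ref{prop:function-reg}\eqref{it:fct-conv} gives $\tilde\varphi^\varepsilon\to\tilde\varphi\vert_\frakG=\varphi$ uniformly on $\frakG$ as $\varepsilon\searrow 0$.

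By the definition of a weak solution, for each $\varepsilon>0$ the function
\[
f_\varepsilon(t) := \int_\frakG \tilde\varphi^\varepsilon\dd\mu_t
\]
is absolutely continuous on $(0,T)$, and in particular continuous. Since each $\mu_t$ is a probability measure, we have the uniform bound
\[
\sup_{t\in(0,T)}\bigl\lvert f_\varepsilon(t)-f(t)\bigr\rvert
\leq \bigl\Vert\tilde\varphi^\varepsilon-\varphi\bigr\Vert_{L^\infty(\frakG)}\xrightarrow{\varepsilon\searrow 0} 0,
\]
where $f(t):=\int_\frakG\varphi\dd\mu_t$. Thus $f$ is the uniform limit on $(0,T)$ of continuous functions, hence continuous.

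There is no real obstacle here: the only subtle point is choosing an extension of $\varphi$ to $\frakG_\ext$ that is genuinely continuous at the vertices (so that $\tilde\varphi^\varepsilon$ really lies in $C^1(\overline\frakE)\cap C(\frakG)$ and qualifies as a test function), and the constant extension on the auxiliary edges accomplishes exactly this.
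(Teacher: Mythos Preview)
Your proof is correct and follows essentially the same approach as the paper's own proof: extend $\varphi$ continuously to $\frakG_\ext$, apply the regularisation from Definition~\ref{def:regularised_curves1} and Proposition~\ref{prop:function-reg} to obtain $C^1(\overline\frakE)\cap C(\frakG)$ approximants converging uniformly, and conclude by uniform limits of continuous functions. You are slightly more explicit about the choice of extension (constant on auxiliary edges), but otherwise the argument is identical.
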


\begin{proof}
Fix	\math{\varphi \in C(\frakG)}, take a continuous extension to \math{\frakG_\ext}, and define \math{\varphi^\varepsilon} accordingly.
Proposition \ref{prop:function-reg}.\ref{it:fct-conv} then implies that \math{\varphi^\varepsilon} converges uniformly to \math{\varphi} on \math{\frakG} as \math{\varepsilon \searrow 0}. As a result, the function \math{t \mapsto \int_\frakG \varphi^\varepsilon \dd \mu_t} converges uniformly to $t \mapsto \int_\frakG \varphi \dd \mu_t$.
Since \math{\varphi^\varepsilon} belongs to 
\math{C(\frakG) \cap C^1(\overline \frakE)} by Proposition \ref{prop:function-reg}.\ref{it:fct-reg}, we conclude that the mapping \math{t \mapsto \int_\frakG \varphi \dd \mu_t} is continuous, being a uniform limit of continuous functions.
\end{proof}

By duality, we obtain a natural regularisation for measures.

\begin{definition}[Regularisation of measures]\label{def:regularise-functions}
For \math{\mu \in \calM(\frakG)} we define \math{\mu^\varepsilon \in \calM(\frakG_\ext)}
by 
\begin{equation}\label{eq:kernel_approx1}
	\int_{\frakG_\ext} \varphi \dd \mu^\varepsilon 
		:= \int_{\frakG} \varphi^\varepsilon \dd \mu.
\end{equation}
for all $\varphi \in C(\frakG_\ext)$.

Analogously, for $J \in \calM(\frakE)$ we define $J^\eps\in\calM(\frakE_{\rm ext})$ as follows: first we  extend $J$ to a measure on $\frakG$ giving no mass to $\frakG\setminus\frakE$.
Then we define $\tilde J_\eps \in \calM(\frakG_{\rm ext})$ by the formula above.
Finally, we define $J_\eps \in \calM(\frakE_{\rm ext})$ by restriction of $\tilde J_\eps$ to $\frakE_{\rm ext}$.
\end{definition}

It is readily checked that the right-hand side defines a positive linear functional on $C(\frakG_\ext)$, so that $\mu^\eps$ is indeed a well-defined measure.

\begin{proposition}\label{prop:measure-reg}
The following properties hold for any $\eps > 0$:
\begin{enumerate}[(i)]
\item \label{it:mass} 
Mass preservation: $\mu^\eps(\frakG_\ext) = \mu(\frakG)$ for any $\mu \in \calM(\frakG)$.
\item \label{it:regularisation} 
Regularising effect:
For any $\mu \in \calP(\frakG)$, the measure $\mu^\varepsilon$ is absolutely continuous with respect to $\lambda$ with density 
\begin{align*}
	\rho^\varepsilon(x) = 
	\left\{ \begin{array}{ll}
\displaystyle \frac{1}{2 \eps} \mu\big(e \cap I_e(x)\big) ,
 & \text{for $x$ on $e$ in $\frakE$, }\\
\displaystyle \frac{1}{2\eps} \bigg( \indicator_{\{d(x,w) \leq 2 \eps\}} \mu(\{w\}) + \sum_{e \in \sE: w \in e} \mu\big(e \cap I_e(x)\big) \bigg),
 & \text{for $x$ on $e^\ext_w$, $w \in \sV,$}
 \end{array} \right.
\end{align*}
where
\begin{equation*}
I_e(x) := \left(\frac{x - \eps}{a_e^\eps}
	\vee \Big(-\frac{\ell_e}{2}\Big), \frac{x + \eps}{a_e^\eps} \wedge \frac{\ell_e}{2}\right).
\end{equation*}
In particular, 
	$\rho^\varepsilon(x) \leq \frac{1}{2 \eps}$ for all $x \in \frakG_\ext$.
\item 
\label{it:energy} 
Kinetic energy bound: For $\mu \in \calP(\frakG)$ and $v \in L^2(\mu)$, 
define 
	$J = v \mu|_\frakE \in \calM(\frakE)$. 
Consider the regularised measures 
	$\mu^\eps \in \calP(\frakG_\ext)$ 
and 
	$J^\eps \in \calM(\frakE_\ext)$.  
Then we have $J^\eps = v^\eps \mu^\eps$ for some $v^\eps \in L^2(\mu^\eps)$ and
\begin{equation}\label{eq:approx_est_1}
	 \int_{\frakE_\ext} \abs{v^\varepsilon}^2 
		 		\dd \mu^\varepsilon
	 \leq \int_\frakE \abs{v}^2 \dd \mu.
\end{equation}

\item For any $\mu \in \calP(\frakG)$ we have weak convergence $\mu^\eps \rightharpoonup \mu$ in $\calP(\frakG_\ext)$ as $\eps \to 0$.\label{it:convergence}

\item\label{it:reg-CE} Let $(\mu_t, J_t)_{t \in (0,T)}$ be a weak solution to the to the continuity equation \eqref{eq:ce_weak1}. Then the regularised pair $(\mu_t^\varepsilon, J^\eps_t)_{t \in (0,T)}$ is a weak solution to a modified continuity equation on $\frakG_\ext$ in the following sense: 
\begin{quote}
For every absolutely continuous function $\varphi$ on $\frakG_\ext$, the function \math{t \mapsto \int_{\frakG_\ext} \varphi \dd \mu_t^\varepsilon} is absolutely continuous and for a.e.\ $t \in (0,T)$ we have
\begin{equation}	\label{eq:ce_weak_regularised1}
\frac{\D}{\D t} \int_{\frakG_\ext} \varphi \dd \mu_t^\varepsilon = \int_{\frakE_\ext} \alpha^\varepsilon\nabla \varphi \cdot \dd J_t^\varepsilon,
\end{equation}
with $\alpha^\eps$ as in Definition~\ref{def:regularised_curves1}.
\end{quote}
\end{enumerate}
\end{proposition}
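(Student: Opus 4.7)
Parts (i), (ii), and (iv) I would obtain as direct consequences of the defining duality \eqref{eq:kernel_approx1}. Testing with $\varphi \equiv 1$ gives $\mu^\eps(\frakG_\ext) = \mu(\frakG)$, which is (i). For (ii), I would substitute the explicit averaging formula \eqref{eq:kernel_approx1_function1} into \eqref{eq:kernel_approx1} and exchange the order of integration. For $x$ interior to an original edge $e$, only averaging windows on $e$ itself reach $x$; for $x$ on an auxiliary edge $e^\ext_w$, windows from every edge incident at $w$ contribute, together with a possible atom $\mu(\{w\})$ when $x$ lies within $2\eps$ of $w$. The bound $\rho^\eps \leq 1/(2\eps)$ is then immediate from $\mu(\frakG) = 1$. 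For (iv), given $\varphi \in C(\frakG_\ext)$, the uniform convergence $\varphi^\eps \to \varphi|_\frakG$ furnished by Proposition \ref{prop:function-reg}\eqref{it:fct-conv} yields $\int \varphi\,d\mu^\eps = \int \varphi^\eps\,d\mu \to \int \varphi\,d\mu$.

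For part (iii), the cleanest route is via the Benamou--Brenier action functional $\calA(\nu, K) := \int |dK/d\nu|^2\,d\nu$ (set to $+\infty$ if $K \not\ll \nu$), which admits the variational characterisation
\begin{equation*}
\calA(\nu, K) = \sup_\phi \Bigl\{ 2 \int \phi\,dK - \int \phi^2\,d\nu \Bigr\}
\end{equation*}
over bounded measurable $\phi$. Applied to $(\mu^\eps, J^\eps)$ and unwinding both defining dualities, the supremum becomes $\sup_\phi\{2\int \phi^\eps\,dJ - \int (\phi^2)^\eps\,d\mu\}$. Since the averaging operation $\phi \mapsto \phi^\eps$ is a convex combination (uniform probability on an interval of length $2\eps$), Jensen's inequality gives $(\phi^\eps)^2 \leq (\phi^2)^\eps$ pointwise. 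Enlarging the supremum to range over all bounded measurable $\psi$ on $\frakG$ then bounds it above by $\calA(\mu, J) = \int |v|^2\,d\mu$, and finiteness of the action produces $v^\eps := dJ^\eps/d\mu^\eps \in L^2(\mu^\eps)$ via Radon--Nikodym, establishing \eqref{eq:approx_est_1}.

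Part (v) is the main content. Since $\varphi^\eps \in C^1(\overline \frakE) \cap C(\frakG)$ by Proposition \ref{prop:function-reg}\eqref{it:fct-reg}, it is admissible in the original weak continuity equation \eqref{eq:ce_weak1}, which yields
\begin{equation*}
\frac{d}{dt}\int_{\frakG_\ext}\varphi\,d\mu_t^\eps = \frac{d}{dt}\int_\frakG \varphi^\eps\,d\mu_t = \int_{\overline\frakE} \nabla \varphi^\eps\,dJ_t = \int_\frakE \nabla \varphi^\eps\,dJ_t,
\end{equation*}
where the last identity uses Lemma \ref{lem:no-momentum-vertices}. Differentiating \eqref{eq:kernel_approx1_function1} under the integral (valid since $\varphi$ is absolutely continuous on each edge) produces the identity $\nabla \varphi^\eps(y) = \alpha_e^\eps (\nabla \varphi)^\eps(y)$ for $y$ on edge $e$, the factor $\alpha_e^\eps$ reflecting the rescaling built into the kernel to enforce continuity at the nodes. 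Invoking the defining duality for $J^\eps$ --- with $\alpha^\eps$ extended to each auxiliary edge along its incident-edge parametrisation --- rewrites the right-hand side as $\int_{\frakE_\ext} \alpha^\eps \nabla\varphi\,dJ_t^\eps$, giving \eqref{eq:ce_weak_regularised1}. The main obstacle throughout is the careful bookkeeping near vertices: each auxiliary edge carries several parametrisations along which $\alpha^\eps$ takes distinct values, so the identifications in (ii), (iii), and (v) must respect this convention, and Lemma \ref{lem:no-momentum-vertices} is essential to discard the boundary contributions in the original equation.
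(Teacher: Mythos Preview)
Your proposal is correct and follows essentially the same route as the paper. The only cosmetic difference is in (iii): the paper phrases the dual representation via the Benamou--Brenier functional $\calB_2(\mu,J)=\sup\{\int a\,d\mu+\int b\,dJ:\ a+b^2/2\le 0\}$ (Lemma~\ref{lem:BB}), whereas you use the equivalent one-parameter form $\sup_\phi\{2\int\phi\,dJ-\int\phi^2\,d\mu\}$; the Jensen step and the conclusion are identical.
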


In order to prove \eqref{it:energy}, we will make use of the so-called Benamou--Brenier functional (see, e.g., \cite[Section~5.3.1]{Sant} for corresponding results in the Euclidean setting).

Define $K_2 := \{ (a,b) \in \reals \times \reals: a+ b^2/2 \leq 0 \}$. 
By a slight abuse of notation,
$C(\frakG, K_2)$ 
(resp. $L^\infty(\frakG, K_2)$) 
denotes the set of all continuous (resp. bounded and measurable) functions \math{a,b: \frakG \to \reals} such that \math{a+ b^2/2 \leq 0}.

\begin{definition}
The \emph{Benamou--Brenier functional} $\calB_2: \calM(\frakG) \times \calM(\frakE) \to \reals \cup \{ +\infty\}$ is defined by
\begin{equation*}
\calB_2(\mu, J) := \sup_{ (a,b) \in C(\frakG, K_2)} \bigg\{ \int_\frakG a \dd \mu + \int_\frakE b \dd J \bigg\}.
\end{equation*}
\end{definition}

Some basic properties of this functional are collected in the following lemma.

\begin{lemma}\label{lem:BB}
The following statements hold:
\begin{enumerate}[(i)]
\item\label{it:lem:BB:1} For $y,z \in \R$ we have
\begin{equation}	\label{eq:it:lem:BB:1} 
\alpha(z,y):=\sup_{(a,b) \in K_2} \{ a z + b y \} = 	\begin{cases}
								\displaystyle	\frac{\abs{y}^2}{2z} 	& \text{if } z>0, \\
											0				& \text{if } z= 0 \text{ and } y = 0, \\
											+\infty			& \text{otherwise.}
								\end{cases}
\end{equation}

\item\label{it:lem:BB:2} For 
	$\mu \in \calM(\frakG)$ 
and 
	$J \in \calM(\frakE)$ 
we have
\begin{equation}\label{eq:it:lem:BB:2} 
	\calB_2(\mu, J) 
		= \sup_{(a,b) \in L^\infty(\frakG, K_2)} 
			\bigg\{ 	
				\int_\frakG a \dd \mu
					 + \int_\frakE b \dd J 
			\bigg\}.
\end{equation}

\item\label{it:lem:BB:3} The functional $\calB_2$ is convex and lower semicontinuous with respect to the topology of weak convergence on $ \calM(\frakG) \times \calM(\frakE)$.

\item\label{it:lem:BB:4} 
If 	
	$\mu \in \calM(\frakG)$ 
is nonnegative and 
	$J \in \calM(\frakE)$
satisfies 
	$J \ll \mu|_\frakE$
with $J = v \mu|_\frakE$, 
then we have	
\begin{equation}	\label{eq:it:lem:BB:4}
\calB_2(\mu, J) = \frac{1}{2} \int_\frakE \abs{v}^2 \dd \mu.
\end{equation}
Otherwise, we have $\calB_2(\mu, J) = +\infty.$
\end{enumerate}
\end{lemma}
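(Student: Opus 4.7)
The plan is to handle the four assertions in order. The substantive content will reside in part (i), a one-dimensional convex-duality computation, and part (iv), which will be obtained by combining (i) and (ii) via a pointwise optimisation. The main technical nuisance I expect will be in (ii): simultaneously preserving continuity of both coordinates while retaining the pointwise constraint $a + b^2/2 \leq 0$.

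For (i), I will argue by cases on the sign of $z$. The case $z > 0$ is the essential one: the constraint $a \leq -b^2/2$ must be binding at the optimum, reducing the problem to maximising $-zb^2/2 + by$ in $b$; the unique maximiser $b = y/z$ yields $y^2/(2z)$. The remaining three cases ($z = y = 0$; $z = 0$, $y \neq 0$; and $z < 0$) are disposed of by exhibiting an explicit admissible pair that either evaluates trivially or drives $az + by$ to $+\infty$. Part (iii) will follow by a standard observation: each fixed $(a,b) \in C(\frakG, K_2)$ defines a linear and weakly continuous functional on $\calM(\frakG) \times \calM(\frakE)$ (test functions are bounded since $\frakG$ is compact), so the pointwise supremum $\calB_2$ is convex and weakly lower semicontinuous.

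For (ii), the inequality $\geq$ is immediate. For the reverse, I will avoid approximating $a$ and $b$ independently; instead, given $(a,b) \in L^\infty(\frakG, K_2)$, I will introduce the non-negative slack $c := -a - b^2/2 \geq 0$. Using Lusin's theorem, I will pick uniformly bounded continuous sequences $b_n \in C(\frakG)$ and $c_n \geq 0$ in $C(\frakG)$ converging a.e.\ (with respect to $|\mu| + |J|$) to $b$ and $c$, and set $a_n := -c_n - b_n^2/2 \in C(\frakG)$. Then $(a_n, b_n) \in C(\frakG, K_2)$ by construction, and dominated convergence transfers the identity in the supremum.

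For (iv), assume first $\mu \geq 0$ and $J = v\mu\vert_\frakE$ with $v \in L^2(\mu)$. By part (ii) the supremum in the definition of $\calB_2$ becomes $\sup \bigl\{\int_\frakE (a + bv)\dd\mu + \int_{\frakG \setminus \frakE} a \dd\mu\bigr\}$ over $(a,b) \in L^\infty(\frakG, K_2)$. Since $a \leq 0$ pointwise, the second integral is maximised at $a = 0$ on $\frakG \setminus \frakE$. On $\frakE$, the pointwise inequality $a + bv \leq -b^2/2 + bv \leq v^2/2$ holds, with equality for $b = v$ and $a = -v^2/2$; for bounded $v$ this pair is admissible and realises the supremum $\tfrac12 \int_\frakE v^2 \dd\mu$. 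The general case $v \in L^2(\mu)$ will follow by applying this to the truncations $v_n := (v \wedge n) \vee (-n)$ and passing to the limit by monotone convergence. For the \emph{otherwise} clause I will exhibit divergent sequences of admissible pairs: if $\mu$ has a negative part supported on a Borel set $A$, the choice $a = -c\,\indicator_A$, $b = 0$ yields $-c\,\mu(A) \to +\infty$; if $J \not\ll \mu\vert_\frakE$, I will Lebesgue-decompose $J = v\mu\vert_\frakE + J_s$, pick a Borel set $N \subseteq \frakE$ with $\mu(N) = 0$ carrying $J_s$, and take $b = c\,h\,\indicator_N$ (where $h$ is the sign density of $J_s$ with respect to $|J_s|$) together with $a = -\tfrac12 b^2$, which sends the functional to $+\infty$ as $c \to \infty$.
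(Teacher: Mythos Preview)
Your proposal is correct and follows essentially the same approach as the paper. The only noteworthy difference is in part (ii): the paper approximates $a$ and $b$ separately by continuous functions via Lusin's theorem and then restores admissibility by replacing $a_\delta$ with $\min\{a_\delta,-b_\delta^2/2\}$, whereas you parametrise via the slack $c=-a-b^2/2\geq 0$, approximate $(b,c)$, and set $a_n=-c_n-b_n^2/2$; this is arguably cleaner since admissibility is built in. In part (iv) your explicit truncation $v_n=(v\wedge n)\vee(-n)$ makes rigorous a passage the paper leaves implicit, and your treatment of the singular case via the Lebesgue decomposition is equivalent to the paper's choice of $a=-k^2\indicator_A/2$, $b=k\indicator_A$ on a $\mu$-null set $A$ with $J(A)\neq 0$.
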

\begin{proof}
\eqref{it:lem:BB:1}: see \cite[Lemma~5.17]{Sant}. 

\medskip
\eqref{it:lem:BB:2}: Clearly, 
the right-hand side of \eqref{eq:it:lem:BB:2} is bounded from below by $\calB_2(\mu, J)$. 
To prove the reverse inequality, let \math{a,b: \frakG \to \reals} be measurable functions satisfying \math{a+ b^2/2 \leq 0}.
By Lusin's theorem (see, e.g., \cite[Theorem~7.1.13]{measure}) there exist functions $a_\delta, b_\delta \in C(\frakG, K_2)$ satisfying
\begin{equation*}
\mu(\{a \neq a_\delta\}) \leq \frac{\delta}{2},\quad \sup \abs{a_\delta} \leq \sup \abs{a} \quad \text{and} \quad \vert J \vert(\{b \neq b_\delta\}) \leq \frac{\delta}{2}, \quad \sup \abs{b_\delta} \leq \sup \abs{b}. 
\end{equation*}
Define $\tilde a_\delta := \min \{ a_\delta, - \vert b_\delta \vert^2/2 \}$, so that the inequality $\tilde a_\delta + b_\delta^2/2 \leq 0$ is satisfied. 
Hence, the pair $(\tilde a_\delta, b_\delta)$ is admissible for the supremum on the right-hand side of \eqref{eq:it:lem:BB:2}.
Since $\int_\frakG \tilde a_\delta \dd \mu + \int_\frakE b_\delta \dd J$ converges to $\int_\frakG a \dd \mu + \int_\frakE b \dd J$ as $\delta \searrow 0$, we obtain \eqref{eq:it:lem:BB:2}.

\medskip
\eqref{it:lem:BB:3}: This follows from the definition of $\calB_2$ as a supremum of linear functionals.

\medskip
\eqref{it:lem:BB:4}: 
Let $\mu$ be nonnegative and $J \ll \mu$ with $J = v \mu$. Setting $v=0$ on $\frakG\setminus\frakE$,  
 \eqref{eq:it:lem:BB:2} and \eqref{eq:it:lem:BB:1} yield
\begin{equation*}
\calB_2(\mu, J) 
	= \sup_{(a,b) \in L^\infty(\frakG, K_2)} \Bigl\{ \int_\frakG a + bv \dd \mu \Bigr\}
	= \frac{1}{2} \int_\frakE \abs{v}^2 \dd \mu.
\end{equation*}

To prove the converse, 
suppose first that there exists a Borel set 
	$A \subseteq \frakG$ with $\mu(A) < 0$. 
Pick $a = - k \indicator_A$ and $b \equiv 0$ with $k \geq 0$, so that $\calB_2(\mu, J) \geq - k \mu(A)$. 
Since $k$ can be taken arbitrarily large, we infer that $\calB_2(\mu, J) = + \infty$.
Now suppose $\mu$ is non-negative,
but the signed measure $J$ is not absolutely continuous with respect to $\mu|_\frakE$, i.e., there exists a $\mu$-null set $A \subseteq \frakE$ such that $J(A) \neq 0$. 
For 
	$a = - \frac{k^2}{2} \indicator_A$ and $b = k \indicator_A$ 
with $k \in \reals$, we have 
	$\calB_2(\mu, J) \geq k J(A)$, 
which implies the result.
\end{proof}

\begin{proof}[Proof of Proposition~\ref{prop:measure-reg}]
\eqref{it:mass}: The claim follows readily from the definitions. 

\medskip

\eqref{it:regularisation}: 
For $\varphi \in C(\frakG_{\rm ext})$
we have
\begin{align*}
	\int_{\frakG_\ext} 
		\varphi(y) 
	\dd \mu^\varepsilon(y)
	& = \int_{\frakG} \varphi^\varepsilon(x) \dd \mu(x)
	 =
	\sum_{w \in \sV}
		\varphi^\varepsilon(w)
		\mu\big(\{ w \}\big)
	+
	\sum_{e \in \sE}
		\int_e \varphi^\varepsilon(x) \dd \mu(x).	
\end{align*}
For $w \in \sV$, we note that $\varphi^\eps(w)$ is obtained by averaging $\varphi$ on a subset of the auxiliary edge $e_w^\ext$:
\begin{align*}
	\varphi^\eps(w)
	= \frac{1}{2\eps}
		\int_{e_w^\ext}	
		\indicator_{\{d(w,y) \leq 2 \eps\}}
			\varphi(y)
		\dd y.
\end{align*}
For $e \in \sE$ we obtain, interchanging the order of integration,
\begin{align*}
	\int_e \varphi^\varepsilon(x) \dd \mu(x)
	& = \frac{1}{2\eps}
		\int_{(-\ell_e/2, \ell_e/2)}
		\bigg(
			\int_{\alpha^\varepsilon_e x - \varepsilon}
		 ^{\alpha^\varepsilon_e x+ \varepsilon} 
		 	\varphi(y) \dd y 
		\bigg)
		\dd \mu(x)
	\\& = \frac{1}{2\eps}
		\int_{(-\ell_e/2-2\eps, \ell_e/2+2\eps)}
		\varphi(y)\mu\big(I_\eps(y)\big)
		\dd y 
\end{align*}
Combining these three identities, the desired result follows.
\medskip

\eqref{it:energy}: 
Take bounded measurable functions \math{a,b: \frakG \to \reals} satisfying \math{a+ b^2/2 \leq 0},
extend them by $0$ to $\frakG_{\rm ext}$,
and define regularised functions 
	\math{a^\varepsilon,b^\varepsilon:\frakG\to\R} 
as done for \math{\varphi} in \eqref{eq:kernel_approx1_function1}.
By Jensen's inequality and the fact that the regularisation is linear and positivity-preserving, we obtain
\begin{equation*}
a^\varepsilon(x) + \frac{1}{2} \abs{b^\varepsilon(x)}^2 \leq \bigl( a + {\textstyle \frac{1}{2}}\vert b\vert^2 \bigr)^\varepsilon(x) \leq 0 		\qquad \forall x \in \frakG,
\end{equation*}
i.e., \math{a^\varepsilon} and \math{b^\varepsilon} are admissible for the supremum in \eqref{eq:it:lem:BB:2}. 
Therefore,
\begin{equation}\label{eq:regularised_L2_bound2}
	\int_{\frakG_\ext} a \dd \mu^\varepsilon + \int_{\frakE_\ext} b \dd J^\varepsilon 
	=
	\int_\frakG a^\varepsilon \dd \mu + \int_\frakE b^\varepsilon \dd J 
	\leq 
	\frac{1}{2} \int_\frakE \abs{v}^2 \dd \mu. 
\end{equation}
The result follows by taking the supremum over all admissible functions \math{a} and \math{b}.

\medskip
\eqref{it:convergence}: This follows from the uniform convergence of $\varphi^\varepsilon$ to $\varphi$; see Proposition~\ref{prop:function-reg}\eqref{it:fct-conv}.

\medskip
\eqref{it:reg-CE}: The function $\varphi$ is absolutely continuous, hence a.e.\ differentiable on $\frakE_\ext$. 
Proposition~\ref{prop:function-reg}\eqref{it:fct-reg} yields $\varphi^\varepsilon \in  C^1(\overline \frakE) \cap C(\frakG)$ and
\begin{equation}		\label{eq:regularised_derivative_identity1}
\nabla \varphi^\varepsilon(x) = \alpha^\varepsilon(x) (\nabla \varphi)^\varepsilon(x) 		\qquad \forall x \in \frakE.
\end{equation}
Since $\alpha^\varepsilon$ is constant on each metric edge in $\frakE$, we obtain, using the continuity equation and the definition of the regularisation,
\begin{align*}
\frac{\D}{\D t} \int_{\frakG_\ext} \varphi \dd \mu_t^\varepsilon 
	& = \frac{\D}{\D t} \int_{\frakG} \varphi^\varepsilon \dd \mu_t 
	= \int_{\frakE} \nabla \varphi^\varepsilon \cdot v_t \dd \mu_t 
	\\ & = \int_{\frakE} (\nabla \varphi)^\varepsilon \cdot \alpha^\varepsilon v_t \dd \mu_t 
	= \int_{\frakE_\ext} \nabla \varphi \cdot (\alpha^\varepsilon v_t^\varepsilon) \dd \mu_t^\varepsilon.
\end{align*}
\end{proof}

\bigskip
Now we are ready to prove the second part of Theorem~\ref{thm:continuity1}: we adapt the proof of \cite{gigli2015continuity}, where (much) more general metric measure spaces are treated, but stronger assumptions on the measures are imposed (namely, uniform bounds on the density with respect to the reference measure). 
Here we consider more general measures using the above regularisation procedure. 

\subsection{Conclusion of the characterisation of absolutely continuous curves}

In the proof of the second part of Theorem~\ref{thm:continuity1}, we make use of the Hopf--Lax formula in metric spaces and its relation to the dual problem of optimal transport.

\begin{definition}[Hopf--Lax formula]\label{def:Hopf--Lax}
For a real-valued function \math{f} on a geodesic Polish space \math{(X,d)}, we define \math{Q_t f: X \to \reals \cup \{-\infty\}} by
\begin{equation*}
Q_t f(x) := \inf_{y \in X} 
	\Big\{ 
		f(y) + \frac{1}{2t} d^2(x,y)
	\Big\}
\end{equation*}
for all \math{t>0}, and \math{Q_0 f := f}.
\end{definition}

The operators $(Q_t)_{t \geq 0}$ form a  semigroup of nonlinear operators with the following well-known properties; see \cite{AGS2013} for a systematic study.

\begin{proposition}[Hopf--Lax semigroup]\label{pro:Hopf--Lax1}
Let $(X, d)$ be a geodesic Polish space. For any Lipschitz function \math{f: X \to \reals} the following statements hold:
\begin{enumerate}[(i)]
\item \label{it:HL-Lip} 
For every \math{t\geq 0} we have \math{\Lip(Q_t f) \leq 2 \Lip(f)}.
\item \label{it:HL-HJ}
For every \math{x \in X}, the map \math{t \mapsto Q_t f(x)} is continuous on \math{\R_+}, locally semi-concave on \math{(0,\infty)}, and the inequality 
\begin{equation}\label{eq:Kuwadas_lemma}
\frac{\D}{\D t} Q_t f(x) + \frac{1}{2} \lip(Q_t f)^2(x) \leq 0
\end{equation}
holds for all \math{t \geq 0} up to a countable number of exceptions. 
\item \label{it:HL-usc}
The mapping \math{(t,x) \mapsto \lip(Q_t f)(x)} is upper semicontinuous on \math{(0,\infty) \times X}.
\end{enumerate}
\end{proposition}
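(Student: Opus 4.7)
The results are standard in the theory of the Hopf--Lax semigroup on geodesic Polish spaces and can be proved by adapting the arguments of \cite{AGS2013}. The key preliminary observation, used in all three parts, is an a priori distance bound on (approximate) minimisers: taking $y=x$ as a competitor yields $Q_t f(x) \leq f(x)$, so any $\eps$-minimiser $y_\eps$ for the infimum at $(x,t)$ satisfies
\begin{equation*}
	\frac{1}{2t} d^2(x, y_\eps)
		\leq f(x) - f(y_\eps) + \eps
		\leq \Lip(f)\, d(x, y_\eps) + \eps,
\end{equation*}
which gives $d(x, y_\eps) \leq 2 t \Lip(f) + O(\sqrt{\eps})$.

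For part \eqref{it:HL-Lip}, I would fix $x, x' \in X$, take $y_\eps$ an $\eps$-minimiser for $Q_t f(x')$, and estimate
\begin{equation*}
	Q_t f(x) - Q_t f(x')
	\leq \frac{1}{2t}\bigl( d^2(x, y_\eps) - d^2(x', y_\eps) \bigr) + \eps
	\leq \frac{d(x,x')\bigl(d(x,x') + 2 d(x', y_\eps)\bigr)}{2t} + \eps.
\end{equation*}
Combining this with the a priori bound on $d(x', y_\eps)$, symmetrising in $(x, x')$ and sending $\eps \to 0$ yields the global Lipschitz bound. The continuity statement at $t=0$ in \eqref{it:HL-HJ} follows at once, since the same a priori bound gives $|Q_t f(x) - f(x)| \leq 2 t \Lip(f)^2 + O(\sqrt{\eps})$. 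Local semi-concavity on $(0,\infty)$ is obtained via an envelope-type argument: at any $t_0>0$ and any near-minimiser $y_{t_0}^*$, the inequality $u(t) := Q_t f(x) \leq f(y_{t_0}^*) + \tfrac{1}{2t} d^2(x, y_{t_0}^*)$, together with equality at $t=t_0$, provides an upper supporting function that is convex in $1/t$ and hence affine-plus-convex in $t$ on any compact subinterval of $(0,\infty)$; this classical fact translates into local semi-concavity of $u$.

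The delicate point is the Hamilton--Jacobi inequality \eqref{eq:Kuwadas_lemma}. At a point $t$ where $u$ is differentiable (an event which holds outside a countable set by local semi-concavity), the envelope argument yields $u'(t) \leq -d^2(x, y_t^*)/(2 t^2)$ for any near-minimiser $y_t^*$. On the other hand, moving along a geodesic from $x$ toward $y_t^*$ and using the resulting point as a competitor in $Q_t f$ at a neighbouring base point produces the bound $\lip(Q_t f)(x) \leq d(x, y_t^*)/t$. The two estimates combine directly to give \eqref{eq:Kuwadas_lemma}. This is the step where the geodesic assumption on $X$ is truly used, and it is the main obstacle of the proof.

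For part \eqref{it:HL-usc}, I would argue that whenever $(t_n, x_n) \to (t, x)$ in $(0,\infty) \times X$, any sequence of near-minimisers $y_n^*$ for $Q_{t_n} f(x_n)$ stays in a bounded set by the a priori bound, and any limit point $y^*$ is a minimiser for $Q_t f(x)$ by lower semicontinuity of $y \mapsto f(y) + \tfrac{1}{2t} d^2(x,y)$. Combined with the characterisation $\lip(Q_t f)(x) = d(x, y_t^*)/t$ for the maximal near-minimiser established along the way, this yields the upper semicontinuity claim. The technical execution of the geodesic-competitor construction in part \eqref{it:HL-HJ} and of the compactness argument in part \eqref{it:HL-usc} can be carried out essentially verbatim as in \cite[Sections~3--4]{AGS2013}.
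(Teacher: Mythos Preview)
Your proposal is correct and follows essentially the same route as the paper: a direct proof of \eqref{it:HL-Lip} via the a~priori bound $d(x,y_\eps)\leq 2t\Lip(f)+O(\sqrt{\eps})$ on near-minimisers, together with a deferral to \cite{AGS2013} for parts \eqref{it:HL-HJ} and \eqref{it:HL-usc}. One small correction: your estimate for \eqref{it:HL-Lip} only yields $Q_t f(x)-Q_t f(x')\leq \tfrac{d(x,x')}{2t}\big(d(x,x')+4t\Lip(f)\big)$, so symmetrising does \emph{not} give the global Lipschitz bound directly (the quadratic term $d(x,x')^2/(2t)$ survives); rather, letting $x'\to x$ gives $\lip(Q_t f)(x)\leq 2\Lip(f)$, and only then does the geodesic hypothesis upgrade this to $\Lip(Q_t f)=\sup_x\lip(Q_t f)(x)\leq 2\Lip(f)$---exactly as the paper does.
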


\begin{proof}
	\emph{(i)}: This statement can be derived from 
	\cite[Proposition 3.4]{AGS2013}. 
	For the convenience of the reader we provide the complete argument here. 

	Fix $t > 0$ and $x \in X$. For $z \in X$ we  write $F(t,x,z) := f(z)  + \frac1{2t}d^2(x, z)$. 
	We claim that 
	\begin{align*}
		Q_t f(x) 
		= \inf_{\substack{z \in X \\ d(x, z) \leq 2 t L }}
			F(t,x,z),	
	\end{align*}
	where $L$ denotes the Lipschitz constant of $f$.
	Indeed, if $d(x,z) > 2 t L$, we have
	\begin{align*}
		F(t,x,z) 
		=
		f(z) + \frac1{2t}d^2(x, z)
		\geq f(x) - L d(x, z) + \frac1{2t}d^2(x, z)
		> f(x)
		= F(t,x,x), 
	\end{align*}
	which implies the claim.
	
	Fix now $y \in X$ and $\eps > 0$.
	Using the claim, we may pick $z \in X$ 
	such that 
		$d(y, z) \leq 2 t L$ and  
		$F(t,y,z) \leq Q_t f(y) + \eps$.
	Then: 
	\begin{align*}
		Q_t f(x) - Q_t f(y)
		& \leq F(t,x,z) - F(t,y,z) + \eps
		 = \frac1{2 t} \Big(  d^2(x,z) -  d^2(y,z) \Big) 
			+ \eps
		\\& \leq
			\frac{d(x,y)}{2 t}
			\Big(  d(x,z) +  d(y,z) \Big) 
			+ \eps
		 \leq
			\frac{d(x,y)}{2 t}
			\Big(  d(x,y) + 4 t L \Big) 
			+ \eps.
	\end{align*}	
	Reversing the roles of $x$ and $y$, this estimate readily yields 
	\begin{align*}
		\lip(Q_t f)(x) \leq 2 L.
	\end{align*}
	Since $X$ is assumed to be a geodesic space, we have $\Lip(Q_t f) = \sup_x \lip(Q_t f)(x)$ and the result follows.

	\smallskip
		
	\emph{(ii):} See \cite[Theorem 3.5]{AGS2013}.
	
	\smallskip

	\emph{(iii):} See \cite[Propositions 3.2 and 3.6]{AGS2013}.
\end{proof}

We can now conclude the proof of Theorem \ref{thm:continuity1} on the characterisation of absolutely continuous curves in the Wasserstein space over a metric graph.

\begin{proof}[Proof of \eqref{it:continuity1ii} in Theorem~\ref{thm:continuity1}]
Without loss of generality, we set $T=1$.
The main step of the proof is to show that 
\begin{equation}\label{eq:CE_estimate1}
	W_2^2(\mu_0, \mu_1) 
		\leq 
	\int_0^1 \norm{v_r}_{L^2(\mu_r)}^2 \dd r.
\end{equation}
From there, a simple reparametrisation argument (see also \cite[Lemma~1.1.4 \& 8.1.3]{AGS}) yields
\begin{equation*}
W_2^2( \mu_t, \mu_s) 
	\leq \frac{1}{\abs{s-t}} \int_s^t \norm{v_r}_{L^2(\mu_r)}^2 \dd r
\end{equation*}
for all $0 \leq s < t \leq 1$,
which implies the absolute continuity of the curve \math{(\mu_t)_{t \in (0,1)}} in \math{W_2(\frakG)} as well as the desired bound \math{\abs{\dot \mu}(t) \leq \norm{v_t}_{L^2(\mu_t)}} for every Lebesgue point \math{t \in (0,1)} of the map $t \mapsto \norm{v_t}_{L^2(\mu_t)}^2$.

\medskip 

Thus, we have to show \eqref{eq:CE_estimate1}.
To this aim, we will work on the supergraph \math{\frakG_\ext \supseteq \frakG}.

By Kantorovich duality (Proposition \ref{prop:Kantorovich-duality}), there exists
$\varphi \in C(\frakG)$ satisfying
\begin{equation}\label{eq:proof:dualformulation2}
	\frac{1}{2} W_2^2( \mu_0, \mu_1)
	 = \int_{\frakG} Q_1 \varphi \dd \mu_1 
	 	- \int_{\frakG} \varphi \dd \mu_0. 
\end{equation}
Moreover, $\varphi$ is Lipschitz, which follows from the fact that $\varphi$ is $c$-concave with $c(x,y)=\frac12\sd(x,y)^2$ and $(\frakG, \sd)$ is compact.
We consider Lipschitz continuous extensions of \math{\varphi} and \math{Q_1 \varphi} to \math{\frakG_\ext}, both constant on each auxiliary metric edge in $\overline \frakE_\ext$. In particular, \math{\varphi} and \math{Q_1 \varphi} are Lipschitz on \math{\frakG_\ext}.

Set \math{J_t := v_t \mu_t} and consider a regularised pair \math{(\mu_t^\varepsilon, J_t^\varepsilon)_{t \in (0,1)}} as defined by \eqref{eq:kernel_approx1}.
We write
\begin{equation}\begin{aligned}
	\label{eq:CE_estimate2}
\int_{\frakG_\ext} Q_1 \varphi \dd \mu_1^\varepsilon 
	- \int_{\frakG_\ext} \varphi \dd \mu_0^\varepsilon 
 & = 
 \sum_{i=0}^{n-1} 
 \bigg( 
	 	\int_{\frakG_\ext}
		 \big(Q_{(i+1)/n} \varphi 
 		- Q_{i/n} \varphi \big)
		 \dd \mu_{(i+1)/n}^\varepsilon 
 \\ & \qquad \qquad\qquad
 + \int_{\frakG_\ext} Q_{i/n} \varphi \dd\big(\mu_{(i+1)/n}^\varepsilon - \mu_{i/n}^\varepsilon\big) \bigg),
\end{aligned}\end{equation}
and bound the two terms on the right-hand side separately.

\medskip
\emph{Bound 1.} \ 
To estimate the first term on the right-hand side of \eqref{eq:CE_estimate2}, we use \eqref{eq:Kuwadas_lemma} to obtain
\begin{equation}\begin{aligned}
\label{eq:CE_estimate2.1}
	 \sum_{i=0}^{n-1} 
		\int_{\frakG_\ext} 
		\big(Q_{(i+1)/n} \varphi - Q_{i/n} \varphi \big)
		\dd \mu^\varepsilon_{(i+1)/n}
	& \leq - \frac{1}{2} \sum_{i=0}^{n-1} 
			\int_{\frakG_\ext} \int_{i/n}^{(i+1)/n} 
				\lip^2(Q_t \varphi) 
			\dd t \dd \mu^\varepsilon_{(i+1)/n} 
		\\ &
	= - \frac{1}{2} \int_{\frakG_\ext \times (0,1)} 
						 \lip^2(Q_t \varphi)(x) 
					 \dd{\pmb \mu}^\varepsilon_n(x,t),
\end{aligned}\end{equation}
where the measures \math{{\pmb \mu}^\varepsilon_n := \sum_{i=0}^{n-1} \mu^\varepsilon_{(i+1)/n} \otimes \calL^1\vert_{(i/n, (i+1)/n)}} are defined on \math{\frakG_\ext \times (0,1)}. 

To show weak convergence of the sequence \math{({\pmb \mu}^\eps_n)_{n \in \naturalnumbers}}, we take \math{\psi \in C(\frakG_\ext \times [0,1])}. 
Note that $t \mapsto \mu_t$ is weakly continuous by Lemma \ref{lem:regularised_CE1_weakly_continuous}, hence $t \mapsto \mu_t^\eps$ is weakly continuous as well. 
Consequently, $\int_{\frakG_\ext} \psi(\cdot, t) \dd \mu_{\lfloor tn \rfloor/n}^\eps \to \int_{\frakG_\ext} \psi(\cdot, t) \dd \mu_t^\eps$ for every $t$. 
Integrating in time over $(0,1)$, we infer, using dominated convergence, that \math{{\pmb \mu}^\varepsilon_n} converges weakly to \math{{\pmb \mu}^\varepsilon := \int_0^1 \mu^\varepsilon_t \otimes \delta_t \dd t 
} as \math{n \to \infty}.

As $\lip^2(Q_t \varphi)$ is not necessarily continuous, an additional argument is required to pass to the limit in \eqref{eq:CE_estimate2.1}. 
For this purpose, we observe that Proposition~\ref{prop:measure-reg}.\ref{it:regularisation} yields ${\pmb \mu}^\varepsilon_n \ll \lambda \otimes \calL^1$ with a density $\bfrho_n^\eps(x,t)\leq 1/(2\varepsilon)$ for $x \in \frakG_\ext$ and $t \in (0,1)$. 
In particular, the family $(\bfrho_n^\eps)_{n \in \naturalnumbers}$ is uniformly integrable with respect to \math{\lambda \otimes \calL^1}. 
Consequently, the Dunford--Pettis Theorem (see, e.g., \cite[Theorem~4.7.18]{measure}) implies that $(\bfrho_n^\eps)_{n \in \naturalnumbers}$ has weakly-compact closure in $L^1(\frakG_\ext \times (0,1))$. 
Since $\lip^2(Q_t \varphi)$ is bounded, we may pass to the limit $n \to \infty$ in \eqref{eq:CE_estimate2.1} and infer that 
\begin{equation}\begin{aligned}\label{eq:CE_estimate3}
	\limsup_{n \to \infty} 
		\sum_{i=0}^{n-1} 
		\int_{\frakG_\ext} 
		\big(Q_{(i+1)/n} \varphi - Q_{i/n} \varphi \big)
		\dd \mu^\varepsilon_{(i+1)/n} 	
	& \leq - \frac12 \int_{\frakG_\ext \times (0,1)} 
			 \lip^2(Q_t \varphi)(x)
		 \dd {\pmb \mu}^\varepsilon(x,t) 
\\& =
	- \frac12 \int_0^1\int_{\frakE_\ext} 
					 \lip^2(Q_t \varphi)
					 \dd \mu_t^\varepsilon \dd t,
\end{aligned}\end{equation}
where we use that \math{\mu_t^\varepsilon \ll \lambda} on \math{\frakG_\ext} to remove the set of nodes \math{\sV} from the domain of integration.

\medskip
\emph{Bound 2.} \ 
We now treat the second term in \eqref{eq:CE_estimate2}.
As $(\mu_t)_{t \in (0,1)}$ belongs to a weak solution to the continuity equation and we know from Proposition~\ref{prop:function-reg}.\eqref{it:fct-reg}) that 
\math{(Q_{i/n} \varphi)^\varepsilon} belongs to 
\math{C^1(\overline \frakE) \cap C(\frakG)}, 
we infer that the mapping \math{t \mapsto \int_{\frakG} (Q_{i/n} \varphi)^\varepsilon \dd \mu_t} is absolutely continuous. 
Therefore,
\begin{equation}\begin{aligned}\label{eq:CE_estimate4}
	& \sum_{i=0}^{n-1} 
		\int_{\frakG_\ext} Q_{i/n} 
			\varphi 
		\dd(\mu_{(i+1)/n}^\varepsilon - \mu_{i/n}^\varepsilon)
	 = \sum_{i=0}^{n-1} \int_{\frakG}
	 	 (Q_{i/n} \varphi)^\varepsilon
		 	 \dd(\mu_{(i+1)/n} - \mu_{i/n})
	\\& = 
	 	\sum_{i=0}^{n-1} \int_{i/n}^{(i+1)/n} \biggl( \int_{\frakE} \nabla(Q_{i/n} \varphi)^\varepsilon \dd J_t \biggr) \dd t 
	\\ & = 
	\sum_{i=0}^{n-1} \int_{i/n}^{(i+1)/n} 
			 \biggl( \sum_{e \in \sE}
			 	 \alpha^\varepsilon_e 
				 \int_e (\nabla Q_{i/n} \varphi)^\varepsilon
				 \dd J_t \biggr) \dd t 
	\\ & = 
	\sum_{i=0}^{n-1} \int_{i/n}^{(i+1)/n} 
			 \biggl( \sum_{e \in \sE}
			 	 \alpha^\varepsilon_e 
				 \int_e \nabla Q_{i/n} \varphi \cdot 
				 \dd J_t^\eps \biggr) \dd t 
	\\ & \leq \frac{\alpha^\varepsilon_{\max}}{2} 
		\sum_{i=0}^{n-1} 
			\int_{i/n}^{(i+1)/n}
			\int_{\frakE_\ext} 
							 \abs{\nabla Q_{i/n} \varphi}^2 
			\dd \mu_t^\varepsilon \dd t 
	+ \frac{\alpha^\varepsilon_{\max}}{2} 
			\int_0^1 \int_{\frakE_\ext} 
				\abs{ v_t^\varepsilon}^2 
			\dd \mu_t^\varepsilon \dd t,		
\end{aligned}\end{equation}
where \math{\alpha^\varepsilon_{\max} := \max_{e \in \sE} \alpha^\varepsilon_e} and \math{J^\eps_t=v^\eps_t\mu^\eps_t}.

By Proposition~\ref{pro:Hopf--Lax1}.\ref{it:HL-usc}, we have the bound \math{\limsup_{n \to \infty} \lip^2(Q_{\lfloor nt \rfloor / n} \varphi) \leq \lip^2(Q_t \varphi)}. 
As $\varphi$ is Lipschitz continuous, \eqref{it:HL-Lip} in the same proposition shows that $\sup_{t,x} \lip(Q_t \varphi)(x) < \infty$.
Thus, we may invoke Fatou's lemma to obtain
\begin{align*}
\limsup_{n \to \infty} 
	\sum_{i=0}^{n-1} 
		\int_{i/n}^{(i+1)/n} 
		\int_{\frakE_\ext} 
			\abs{\nabla Q_{i/n} \varphi}^2 
		\dd \mu_t^\varepsilon \dd t 
 & \leq 
	 \int_0^1 \int_{\frakE_\ext} 
	 	 \lip^2(Q_t \varphi) 
	\dd \mu_t^\eps \dd t.
\end{align*}
Using this estimate together with Proposition~\ref{prop:measure-reg}.\ref{it:energy}, we obtain
\begin{equation}\begin{aligned}
	\label{eq:CE_estimate5}
\limsup_{n \to \infty}
		\sum_{i=0}^{n-1} \int_{\frakG_\ext} 
			Q_{i/n} \varphi 
		\dd \big(\mu_{(i+1)/n}^\varepsilon 
				- \mu_{i/n}^\varepsilon\big) 	
	&	 \leq \frac{\alpha^\varepsilon_{\max}}{2} 
			\int_0^1 \int_{\frakG_\ext} 
				\lip^2(Q_t \varphi)
			 \dd \mu_t^\varepsilon \dd t 
	\\&	\qquad + \frac{\alpha^\varepsilon_{\max}}{2} 
			 \int_0^1 \int_{\frakE} 
			 	\abs{ v_t}^2
			 \dd \mu_t \dd t.
\end{aligned}\end{equation}

\medskip
\emph{Combination of both bounds.} \ 
Recalling \eqref{eq:CE_estimate2}, we use \eqref{eq:CE_estimate3} and \eqref{eq:CE_estimate5} to obtain
\begin{equation*}
\int_{\frakG_{\mathrlap \ext}} Q_1 \varphi \dd \mu_1^\varepsilon 
	- \int_{\frakG_{\mathrlap \ext}} \varphi \dd \mu_0^\varepsilon 
	 \leq \frac{\alpha^\varepsilon_{\max}}{2} \int_0^1 \int_{\frakE} \abs{ v_t}^2 \dd\mu_t \D t + 
	 \frac{\alpha^\varepsilon_{\max} - 1}{2} \int_{\frakG_\ext} \int_0^1 \lip^2(Q_t \varphi) \dd \mu_t^\varepsilon \dd t.
\end{equation*}
Using Proposition~\ref{prop:measure-reg}.\ref{it:convergence}, the fact that $\alpha^\varepsilon_{\max} \to 1$, and the bound $\sup_{t} \Lip(Q_t \varphi) < \infty$, we may pass to the limit $(\eps \to 0)$ to obtain
\begin{equation}\label{eq:CE_estimate7}
	\int_{\frakG} Q_1 \varphi \dd \mu_1 
		- \int_{\frakG} \varphi \dd \mu_0 
			\leq \frac{1}{2} 
				\int_0^1 \int_{\frakE} 
					\abs{ v_t}^2 
				\dd\mu_t \dd t. 
\end{equation}
In view of \eqref{eq:proof:dualformulation2}, this yields the result. 
\end{proof}

\begin{corollary}[Benamou--Brenier formula]
For any \math{\mu, \nu \in \calP(\frakG)}, we have
\begin{equation}
W_2^2(\mu, \nu) = \min 
	\bigg\{ \int_0^1 \int_\frakE \vert v_t \vert^2 
				\dd \mu_t \dd t \bigg\},
\end{equation}
where the minimum runs over all weak solutions to the continuity equation $(\mu_t, v_t\mu_t)_{t \in [0,1]}$ satisfying $\mu_0 = \mu$ and $\mu_1 = \nu$.
\end{corollary}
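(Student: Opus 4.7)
The plan is to derive the Benamou--Brenier formula directly from Theorem~\ref{thm:continuity1}, which already relates the metric derivative of a curve in $(\calP(\frakG),W_2)$ to the $L^2(\mu_t)$-norm of an associated vector field. The two inequalities $\leq$ and $\geq$ will be handled separately, and each requires essentially one part of Theorem~\ref{thm:continuity1} together with the classical metric-space fact $W_2(\mu_s,\mu_t)\leq\int_s^t|\dot\mu|(r)\dd r$ for $2$-absolutely continuous curves.

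For the upper bound $W_2^2(\mu,\nu)\leq \int_0^1\int_\frakE |v_t|^2\dd\mu_t\dd t$, I would fix an arbitrary weak solution $(\mu_t,v_t\mu_t)_{t\in[0,1]}$ with $\mu_0=\mu$, $\mu_1=\nu$; if the action is infinite, the inequality is trivial, so I may assume $\int_0^1\|v_t\|_{L^2(\mu_t)}^2\dd t<\infty$. By Cauchy--Schwarz, this implies $\int_0^1\|v_t\|_{L^2(\mu_t)}\dd t<\infty$, so Theorem~\ref{thm:continuity1}\eqref{it:continuity1ii} applies and gives $|\dot\mu|(t)\leq\|v_t\|_{L^2(\mu_t)}$ a.e. Hence
\begin{equation*}
    W_2^2(\mu,\nu)
    \;\leq\;\Bigl(\int_0^1 |\dot\mu|(t)\dd t\Bigr)^{\!2}
    \;\leq\;\int_0^1 |\dot\mu|^2(t)\dd t
    \;\leq\;\int_0^1\!\int_\frakE |v_t|^2\dd\mu_t\dd t.
\end{equation*}

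For the matching lower bound, and in fact to show the infimum is attained, I would use that $(\calP(\frakG),W_2)$ is a geodesic metric space (noted in Section~\ref{sec:1} as a consequence of $(\frakG,\sd)$ being a compact geodesic space). Pick a constant-speed $W_2$-geodesic $(\mu_t)_{t\in[0,1]}$ joining $\mu$ and $\nu$, which satisfies $|\dot\mu|(t)=W_2(\mu,\nu)$ for a.e.\ $t$, hence is $2$-absolutely continuous with $\int_0^1|\dot\mu|^2(t)\dd t=W_2^2(\mu,\nu)$. Theorem~\ref{thm:continuity1}\eqref{it:continuity1i} then provides a Borel family of vector fields $v_t\in L^2(\mu_t)$ with $\|v_t\|_{L^2(\mu_t)}\leq|\dot\mu|(t)$ such that $(\mu_t,v_t\mu_t)_{t\in[0,1]}$ is a weak solution of the continuity equation. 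The associated action satisfies
\begin{equation*}
    \int_0^1\!\int_\frakE |v_t|^2\dd\mu_t\dd t
    \;\leq\;\int_0^1 |\dot\mu|^2(t)\dd t
    \;=\;W_2^2(\mu,\nu),
\end{equation*}
which together with the upper bound shows both that the infimum equals $W_2^2(\mu,\nu)$ and that it is attained by any Wasserstein geodesic together with its tangent vector field.

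There is essentially no remaining obstacle, since all the hard work (regularisation on the supergraph, Riesz representation on $L^2(\overline Q,\overline\bfmu)$, and the Hopf--Lax duality argument) has already been carried out in the proof of Theorem~\ref{thm:continuity1}. The only points to mind in the write-up are the measurability of $t\mapsto v_t$ needed to make $(\mu_t,v_t\mu_t)$ a weak solution in the sense of Definition~\ref{def:dist_et_weak_solution1} (this is built into the Borel selection used in part~\eqref{it:continuity1i}), and the fact that one should work with the velocity field constructed by Theorem~\ref{thm:continuity1}\eqref{it:continuity1i} rather than with an arbitrary one, so as to obtain equality $\|v_t\|_{L^2(\mu_t)}=|\dot\mu|(t)=W_2(\mu,\nu)$ and thereby attainment of the minimum.
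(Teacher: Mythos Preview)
Your proposal is correct and follows essentially the same route as the paper: the paper simply invokes the geodesic-space identity $W_2^2(\mu,\nu)=\min\int_0^1|\dot\mu|^2(t)\dd t$ and then appeals to Theorem~\ref{thm:continuity1}, which is exactly what your two inequalities unpack in detail. Your write-up is more explicit but not different in substance.
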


\begin{proof}
As \math{(\calP(\frakG), W_2)} is a geodesic space, we may write
\begin{equation*}
W_2^2(\mu,\nu) = \min\bigg\{ \int_0^1 \abs{\dot \mu}^2(t) \dd t \bigg\},
\end{equation*}
where the minimum runs over all absolutely continuous curves \math{(\mu_t)_{t \in [0,1]}} connecting \math{\mu} and \math{\nu}. 
Therefore, the result follows from Theorem~\ref{thm:continuity1}.
\end{proof}

\section{Lack of geodesic convexity of the entropy}
\label{sec:2}

In this section we consider the entropy functional
	\math{\Ent : \calP(\frakG) 
		\to (-\infty + \infty]} 
defined by
\begin{equation}		
\label{eq:ent_def_flat1}
	\Ent(\mu) := 
	\left\{ \begin{array}{ll}
	\displaystyle 
		\int_{\frakG} \rho \log \rho \dd \lambda \quad & \mbox{if } \mu = \rho \lambda, \\ 
	+\infty & \mbox{otherwise.} 
\end{array}\right.
\end{equation}
As is well known, this functional is lower semicontinuous on \math{(\calP(\frakG),W_2)}; see, e.g., 
\cite[Corollary 2.9]{Liero-Mielke-Savare:2018}.

A celebrated result by McCann asserts that $\Ent$ is geodesically convex on \math{(\calP(\R^d),W_2)}, the Wasserstein space over the Euclidean space $\R^d$.
More generally, on a Riemannian manifold $\calM$, the relative entropy (with respect to the volume measure) is geodesically $\kappa$-convex on \math{(\calP(\calM),W_2)} for $\kappa \in \R$, if and only if the Ricci curvature is bounded below by $\kappa$, everywhere on $\calM$ 
\cite{Otto-Villani:2000,Cordero-Erausquin-McCann-Schmuckenschlager:2001,Renesse-Sturm:2005}.

Metric graphs are prototypical examples in which such bounds fail to hold.
Here we present an explicit example, which shows that 
 the functional $\Ent$ on the metric space \math{(\calP(\frakG), W_2)} over a metric graph $\frakG$ induced by a graph with maximum degree larger than $2$ is \emph{not} geodesically $\kappa$-convex for any \math{\kappa \in \reals}.

\begin{example}\label{ex:three_leaves}
Consider a metric graph induced by a graph with 3 leaves as shown in Figure~\ref{fig:three_leaves}. 

\begin{figure}[ht]
	\includegraphics[width=0.28\textwidth]{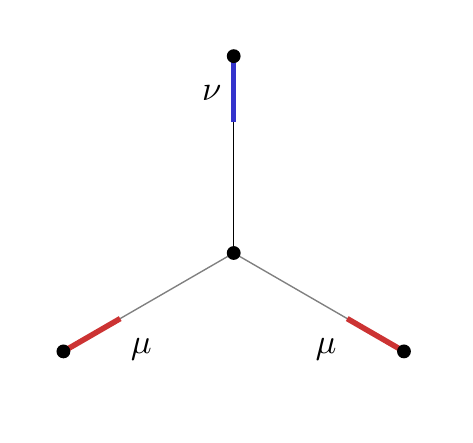}
	\caption{The support of probability measures \math{\mu} and \math{\nu} on a metric graph induced by a oriented star with 3 leaves}
\label{fig:three_leaves}
\end{figure}

We impose an edge weight $1$ on each of the edges $\math{$e_1, e_2, f}.
Consider the probability measures $\mu, \nu \in \calP(\frakG)$ with respective densities $\rho,\eta : \frakG \to \R_+$ given by
\begin{equation*}
	\rho(x) := \begin{cases}\frac{1}{2 \varepsilon} \indicator_{[0,\varepsilon]}(x), &x\in e_1 \text{ or } x\in e_2,\\ 
	0, & x\in f,\end{cases},	
		\quad 
	\eta(x) := \begin{cases}0, & x\in e_1 \text{ or } x\in e_2,\\ \frac{1}{\varepsilon} \indicator_{[1-\varepsilon,1]} (x), & x\in f. \end{cases}
\end{equation*}

\begin{lemma}
The unique optimal coupling of $\mu$ and $\nu$ is given by monotone rearrangement from each of the edges  \math{e_1} and \math{e_2} to \math{f}, i.e., by the map $T$ with
\begin{equation*}
T(x) = 1-\varepsilon +x \in f \qquad \text{ for } x\in e_1 \text{ or } x\in e_2.
\end{equation*}
\end{lemma}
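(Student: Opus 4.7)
The idea is to reduce the transport problem on the star graph to a one-dimensional problem in the distance-to-the-centre coordinate, where the strictly twisted quadratic cost admits a unique monotone-rearrangement optimizer.

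Denote the central vertex by $v$. Since the supports of $\mu$ and $\nu$ lie on distinct edges and the three edges meet only at $v$, every geodesic from $\supp(\mu)\subset e_1\cup e_2$ to $\supp(\nu)\subset f$ passes through $v$, so $\sd(x,y)=\sd(x,v)+\sd(v,y)$ on the relevant domain. Let $\sigma\in\Pi(\mu,\nu)$ be any optimal coupling and fix two pairs $(x_1,y_1),(x_2,y_2)\in\supp(\sigma)$; setting $a_i:=\sd(x_i,v)$ and $b_i:=\sd(v,y_i)$, $c$-cyclical monotonicity with $c=\sd^2$ reads
\begin{equation*}
 (a_1+b_1)^2+(a_2+b_2)^2\leq(a_1+b_2)^2+(a_2+b_1)^2,
\end{equation*}
which expands to $(a_1-a_2)(b_1-b_2)\leq 0$. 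Hence $(\sd(\cdot,v),\sd(v,\cdot))$ is anti-monotone along $\supp(\sigma)$.

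With the edge parametrization of the figure, distances to $v$ are affine functions of the respective edge-coordinates $p\in[0,\varepsilon]$ and $q\in[1-\varepsilon,1]$, and one checks that anti-monotonicity in $(a,b)$ translates into monotone increase of $p\mapsto q$ along $\supp(\sigma)$ (consistent with the stated map $T$). Push $\sigma$ forward under the edge parametrization to obtain a coupling $\tilde\sigma$ on $[0,\varepsilon]\times[1-\varepsilon,1]$ of the atom-free marginals $\tilde\mu=\frac{1}{\varepsilon}\indicator_{[0,\varepsilon]}\dd p$ (formed by summing the $\frac{1}{2\varepsilon}$-density contributions from $e_1$ and $e_2$) and $\tilde\nu=\frac{1}{\varepsilon}\indicator_{[1-\varepsilon,1]}\dd q$. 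Among all such couplings, the unique monotone-increasing one is concentrated on the graph of $\tilde T(p)=1-\varepsilon+p$. Lifting back to $\frakG$, this forces $\sigma$ to be concentrated on the graph of the Borel map $T$ of the statement (defined on $\supp(\mu)$ by $T(x)=1-\varepsilon+p(x)$); combined with $(\proj_1)_\#\sigma=\mu$, this yields $\sigma=(\Id,T)_\#\mu$ uniquely.

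\textbf{Main obstacle.} The delicate point is that the two edges $e_1$ and $e_2$ share the same distance-from-$v$ coordinate, so the one-dimensional profile $\tilde\sigma$ does not a priori determine the coupling $\sigma$ on $\frakG\times\frakG$. The resolution is that $c$-cyclical monotonicity is oblivious to the identity of the edge on which a point $x\in\supp(\mu)$ lies—it depends only on the distance to $v$—so it forces the mass at each $p$-coordinate on \emph{both} edges to be transported to the \emph{same} point of $f$, eliminating the apparent ambiguity.
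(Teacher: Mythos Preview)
Your argument is correct and follows a genuinely different route from the paper's. The paper decomposes any optimal plan as $\pi=\pi_1+\pi_2$ according to whether the source point lies on $e_1$ or $e_2$; each $\pi_i$ is then a one-dimensional optimal plan on the interval $e_i\cup f$, hence induced by a monotone map $T_i$, and a symmetrisation-by-contradiction argument (averaging $\pi_1$ and $\pi_2$ and observing that the averaged plan, while still optimal, cannot be induced by a map unless $T_1=T_2$) forces the two target measures $\nu_1,\nu_2$ to coincide, whence $T_1=T_2=T$. You instead exploit $c$-cyclical monotonicity globally: since the cost on $\supp\mu\times\supp\nu$ depends only on the distances to $v$, the plan pushed forward to the radial coordinates $(p,q)$ has monotone support and therefore equals the unique monotone rearrangement $\tilde T$ between the atom-free one-dimensional marginals; injectivity of the coordinate map on $f$ then lifts the graph structure back to $\frakG$, determining $\sigma$ uniquely. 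Your approach is more direct---it avoids the contradiction step---and makes transparent why the additive geometry through $v$ reduces the problem to one dimension; the paper's argument, by contrast, foregrounds the symmetry between $e_1$ and $e_2$ and would generalise more readily to a star with unequal masses on the incoming edges.
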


\begin{proof}
Let $\pi$ be an optimal coupling of $\mu$ and $\nu$ and decompose it as $\pi=\pi_1+\pi_2$, where $\pi_i$, $i=1,2$, are couplings of $\mu_i$, the restriction of $\mu$ to $e_i$, and some $\nu_i$ a measure on $f$ such that $\nu_1+\nu_2=\nu$. Necessarily $\pi_1,\pi_2$ are also optimal. By standard optimal transport theory on the interval $e_i\cup f\subset \R$, $\pi_i$ is given by a map $T_i$, the monotone rearrangement from $\mu_i$ to $\nu_i$. If we show that $\nu_1=\nu_2$, then $T_1=T_2=T$ with $T$ as above and the claim is proven. To this end, assume by contradiction that $\nu_1\neq \nu_2$ and hence $T_1\neq T_2$. We consider the coupling $\pi'=\pi_1'+\pi_2'$ where $\pi_1'=\pi_2'=(\pi_1+\pi_2)/2$ (here, we identify $e_1\cup f$ and $e_2\cup f$ in the obvious way). Since $T_1\neq T_2$, the support of $\pi_1'$ is not contained in the graph of a function. Since $\pi'$ is optimal, the couplings $\pi_i$ are also optimal between their marginals $\mu_i$ and $\frac12\nu$, and thus have to be induced by the monotone rearrangement map $T$, a contradiction.  
\end{proof}

Consequently, the constant speed-geodesic from \math{\mu} to \math{\nu} is thus given by \math{({T_{t}}_{\#}\mu)_{t \in [0,1]}} where $T_t$ is the linear interpolation of $T$ and the identity on \math{e_1\cup f} and \math{e_2\cup f} respectively, more precisely
\begin{equation*}
T_t(x) := 	\begin{cases}
			x+(2 - \varepsilon)t \in e_i, & \text{if } x \leq 1 - (2-\varepsilon)t, \\
			x+(2 - \varepsilon)t -1 \in f, & \text{if } x > 1 - (2-\varepsilon)t,
		\end{cases}
\end{equation*}
for \math{x \in e_i}, \math{i \in \{1,2\}}. 

Set $t_0^\eps := \frac{1-\eps}{2-\eps}$ and $t_1^\eps := \frac{1}{2-\eps}$. The relative entropy of \math{\mu_t} is given by:
\begin{align*}
\Ent(\mu_t) =& 	\begin{cases}
			 \log \bigl(\tfrac{1}{2\varepsilon} \bigr), & 
			t \in [0, t_0^\eps], \\
			\frac{1}{\eps}\big(1 - (2 - \varepsilon)t\big)
				\log 
					\bigl(\tfrac{1}{2}
					\bigr)
			+\log\bigl(\frac{1}{\eps}\bigr), 
				& 	t \in [t_0^\eps, t_1^\eps], \\
			\log \bigr( \tfrac{1}{\varepsilon} \bigr), 		& \text{if }
			t \in [t_1^\eps, 1].
					\end{cases}
\end{align*}
Thus, $t \mapsto \Ent(\mu_t)$ is piecewise affine; see Figure \ref{fig:entropy_plot}.
It follows that
\math{t \mapsto \Ent(\mu_t)} is not \math{\kappa}-convex for any \math{\kappa \in \reals}.

\begin{figure}[ht]
	\includegraphics[width=0.5\textwidth]{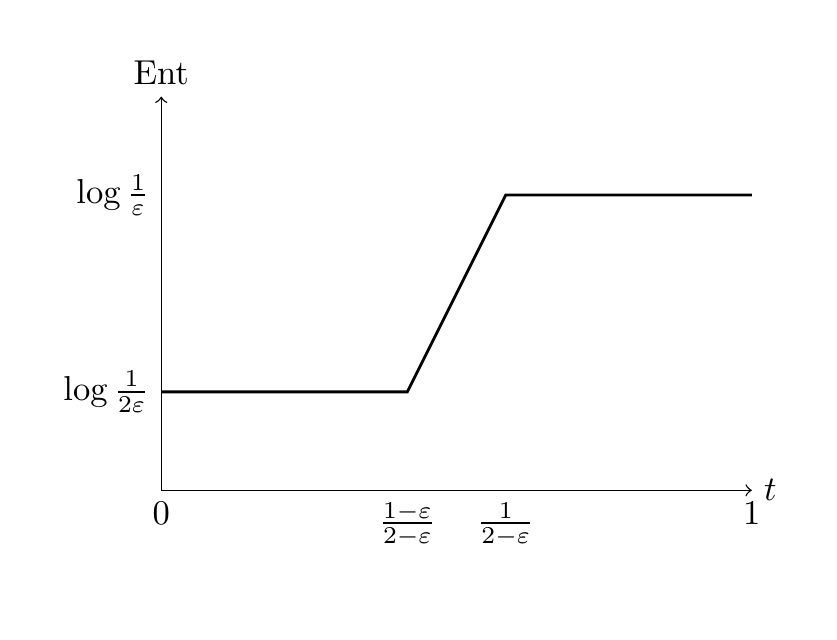}
	\caption{Plot of the entropy along the geodesic interpolation}
\label{fig:entropy_plot}
\end{figure}
\end{example}

\section{Gradient flows in the Wasserstein space over a metric graph}
\label{sec:3}

In this section we study gradient flows 
in the Wasserstein space over a metric graph. 
Namely, we consider diffusion equations 
on metric graphs arising as the gradient flow 
of free energy functionals composed as the sum of 
entropy, potential, and interaction energies. 
We give a variational characterisation of these diffusion equations via energy-dissipation identities 
and we discuss the approximation of solutions 
via the Jordan--Kinderlehrer--Otto scheme (minimizing movement scheme). 
This provides natural analogues on metric graphs of the corresponding classical results in Euclidean space. 
We follow the approach from the Euclidean case; 
see in particular \cite[Section 10.4]{AGS}, 
and adapt it to the current setting.
\medskip

Let  $V: \overline\frakE \to \reals$ be Lipschitz continuous and define the weighted volume measure $\mathfrak{m}:=e^{-V}\lambda$ on $\frakG$ (since $\lambda$ gives no mass to vertices, the potential ambiguity of $V$ there does not matter). We consider the following functionals on $\calP(\frakG)$:
 \begin{enumerate}[$(i)$]
 \item the \emph{relative entropy} $\calE_V:\calP(\frakG)\to(-\infty,\infty]$ defined by
 \begin{equation}\label{eq:def-F}
 \calE_V(\mu):={\rm Ent}[\mu|\mathfrak m]= \begin{cases} 
		\int_{\frakG}
			\rho(x)\log \rho(x)
		\dd\mathfrak m(x) & \text{ if }\mu=\rho\mathfrak m,\\
 +\infty & \text{ otherwise}. \end{cases}
 \end{equation}

 \item the \emph{interaction energy} $\calW:\calP(\frakG)\to \R$ defined by
 \[\calW(\mu) := 
 	\frac12
	 	\int_{\frakG \times \frakG} 
		 	W(x,y)
		\dd\mu(x) \dd\mu(y),
 \]
where $W:\frakG\times\frakG\to \R$ is symmetric and Lipschitz continuous.
\end{enumerate}
Moreover, we define $\calF:\calP(\frakG)\to(-\infty,+\infty]$ as the sum of the previous quantities:
\[\calF:=\calE_V+\calW.\]

Note that $\calE_V$ is bounded from below (by Jensen's inequality and finiteness of $\mathfrak m$) and lower semicontinuous with respect to~weak convergence.
Moreover, $\calW$ is bounded and continuous with respect to weak convergence.

Further note that for $\mu \in \calP(\frakG)$ with $\mu\ll \lambda$ we can write
\[\calE_V(\mu) = \calE(\mu)+\calV(\mu),\]
where $\calE(\mu)={\rm Ent[\mu|\lambda]}$ is the Boltzmann entropy on $\frakG$ and $\calV:\calP(\frakG)\to \R$ defined by
 \[
	 \calV(\mu) := \int_{\frakG} V(x)\dd \mu(x)
\]
is the \emph{potential energy}. The latter is well defined for $\mu\ll\lambda$ despite the potential discontinuity of $V$ at the vertices.

\subsection{Diffusion equation and energy dissipation}

We consider the following diffusion equation on the metric graph $\frakG$ given by 
\begin{equation}\label{eq:MKVeq}
	\partial_{t}\eta
	= \Delta\eta	
	+\nabla\cdot\big(\eta\big(\nabla V+\nabla W[\mu]\big)\big).
\end{equation}
Here $\mu=\eta\lambda$ is a probability on $\mathfrak G$ and we set $W[\mu](x) := \int_{\frakG} W(x,y)\dd\mu(y)$.
In analogy with the classical Euclidean setting we will show below that this PDE is the Wasserstein gradient flow equation of the free energy $\calF$.
Though the setting of metric graphs is one-dimensional, we prefer to use multi-dimensional notation such as $\Delta$ and $\nabla \cdot$ for the sake of clarity.

We consider the following notion of weak solution for \eqref{eq:MKVeq}.
\begin{definition}\label{def:MKV-weak}
We say that a curve $(\eta_{t})_{t\in[0,T]}$ of probability densities w.r.t.~$\lambda$ on $\frakG$ is a weak solution to \eqref{eq:MKVeq} if for $\rho_t:=\eta_t\cdot e^V$ we have $\rho_{t}\in W^{1,1}(\overline{\frakE})\cap C(\frakG)$  for a.e.~$t$, and the pair 
$(\mu,J)$ given by 
\begin{align*}
	\mu_{t}=\rho_{t}\mathfrak m \quad\text{and}\quad J_{t}
	= - \left(\nabla\rho_{t} +\rho_{t}\nabla W[\mu_{t}]\right)\mathfrak m
\end{align*}
is a weak solution to the continuity equation in the sense of Definition \ref{def:dist_et_weak_solution1}, i.e., we ask that $t\mapsto\mu_{t}$ is weakly continuous and for every 
$\varphi \in C^1_c\big((0,T) \times\overline\frakE\big)$
such that  
$\varphi_t,\partial_t\varphi_t
\in 
	C^1(\overline\frakE) \cap C({\frakG})$
for all $t \in (0,T)$, 
we have
\begin{equation}\label{eq:MKVweak_solution}
	\int_0^T\! \biggl( \int_{\frakG} \partial_t \varphi \rho_t \dd \mathfrak m  -
	\int_{\overline\frakE} \nabla \varphi\cdot \big(\nabla \rho_{t} +\rho_{t}\nabla W[\mu_{t}]\big)\dd \mathfrak m \biggr) \dd t = 0.
\end{equation}
\end{definition}

\begin{remark}
Let us briefly consider the special case where $V=W=0$. Then~\eqref{eq:MKVeq} is simply the heat equation on a metric graph, which has been introduced already in~\cite{Lum80}. 
It is known since~\cite{KraMugSik07,Mug07} that the Laplacian with natural vertex conditions (continuity across the vertices along with a Kirchhoff-type condition on the fluxes) is associated with a Dirichlet form, hence the Cauchy problem for this PDE is well-posed on $L^2(\frakG)$: 
more precisely, it is governed by an ultracontractive, Markovian $C_0$-semigroup $(\e^{t\Delta})_{t\ge 0}$ that extrapolates to $L^p(\frakG)$ for all $p\ge 1$, 
as well as to $C(\frakG)$ and, 
by duality, to the space $\mathcal M(\frakG)$ of Radon measures on $\frakG$. 
For any initial value $\mu_0\in {\mathcal M}(\frakG)$, $(t,x)\mapsto \rho(t,x)=(\e^{t\Delta}\mu_0)(x)$ defines a classical solution to the Cauchy problem for the heat equation with initial value $\mu_0$.
It is easy to see that this solution is a weak solution in the sense of Definition~\ref{def:MKV-weak} as well.
\end{remark}

The dissipation of the free energy along solutions to \eqref{eq:MKVeq} at $\mu=\rho\mathfrak m$ is formally given by
\begin{align*}
\frac{\D}{\D t}\calF(\mu) = - \int_{\frakE} \Big|\frac{\nabla \rho}{\rho}+\nabla W[\mu]\Big|^{2}  \dd \mu.
\end{align*}
This motivates the following definition.
\begin{definition}[Energy dissipation functional]
	\label{def:dissipation}
The \emph{energy dissipation functional} 
	$\calI : \calP(\frakG) \to [0, + \infty]$ 
is defined as follows. 
If $\mu = \rho \mathfrak m$ with 
	$\rho\in W^{1,1}(\overline\frakE)\cap C(\frakG)$ 
and $\nabla \rho+\rho\nabla W[\mu] 
	= \boldsymbol{w}\rho$ 
for some $\boldsymbol{w}\in L^{2}(\mu)$ we set 
\begin{equation*}
\calI(\mu) := \int_{\frakE} \vert \boldsymbol{w} \vert^2 \dd\mu.
\end{equation*}
Otherwise, we set $\calI(\mu)=+\infty$.
\end{definition}

\begin{remark}
	We emphasize that continuity of $\rho$ on $\frakG$ is required for finiteness of $\calI(\mu)$ in this definition. It is not sufficient that $\rho$ belongs to $W^{1,1}(\overline{\frakE})$. 
	The continuity is important in the proof of Theorem \ref{thm:EDE} below, as it ensures spatial continuity for gradient flow curves (i.e.,~curves of maximal slope with respect to the upper gradient $\sqrt{\calI}$) and it allows us to identify them with weak solutions to the diffusion equation \eqref{eq:MKVeq}. The requirement of spatial continuity for weak solutions to \eqref{eq:MKVeq}  in Definition \ref{def:MKV-weak} is essential in order to couple the dynamics on different edges across common vertices.
\end{remark}

We collect the following properties of the dissipation functional.

\begin{lemma}\label{lem:dissipation-lsc-convex}
Let $\{\mu_{n}\}_n\subseteq\calP(\frakG)$ be a sequence weakly converging to $\mu\in \calP(\frakG)$ such that 
\begin{equation}\label{eq:energy-diss-bdd}
\sup_{n} \mathcal F(\mu_{n})<\infty \quad \text{and} \quad
\sup_{n} \calI(\mu_{n})<\infty.
\end{equation}
Then we have 
\begin{equation}
\mathcal I(\mu)\leq \liminf_{n}\calI(\mu_{n}).
\end{equation}
\end{lemma}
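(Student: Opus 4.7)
My plan is to identify the dissipation with a Benamou--Brenier functional and exploit its lower semicontinuity. Indeed, whenever $\mu=\rho\mathfrak m$ with $\rho\in W^{1,1}(\overline\frakE)\cap C(\frakG)$ and $\calI(\mu)<\infty$, Lemma~\ref{lem:BB}\eqref{it:lem:BB:4} yields
\begin{equation*}
	\calI(\mu) \;=\; 2\calB_2(\mu, J_\mu),
	\qquad
	J_\mu := \bigl(\nabla\rho+\rho\nabla W[\mu]\bigr)\mathfrak m \in \calM(\frakE),
\end{equation*}
because $J_\mu=\bfw\mu$ for the $L^2(\mu)$-field $\bfw$ in Definition~\ref{def:dissipation}. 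Hence it suffices to extract a subsequence along which $J_{\mu_n}\rightharpoonup J_\mu$ in $\calM(\frakE)$; the weak lower semicontinuity of $\calB_2$ from Lemma~\ref{lem:BB}\eqref{it:lem:BB:3} will then give the claim. By the usual subsequence principle, I may work along a subsequence realising $\liminf_n\calI(\mu_n)$.

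First I would extract compactness of the densities $\rho_n$ from the two uniform bounds. Writing $\bfw_n=\nabla\rho_n/\rho_n+\nabla W[\mu_n]$ and using $\|\nabla W[\mu_n]\|_\infty\leq \Lip(W)$, I obtain the Fisher-type estimate
\begin{equation*}
	\int_\frakE \frac{|\nabla\rho_n|^2}{\rho_n}\dd\mathfrak m
	\;\leq\; 2\calI(\mu_n) + 2\Lip(W)^2.
\end{equation*}
Thus $\sqrt{\rho_n}$ is bounded in $H^1(\overline\frakE)$ and normalised in $L^2(\mathfrak m)$. Since $\sqrt{\rho_n}$ is continuous on $\frakG$ and $H^1\hookrightarrow C$ compactly on each closed edge, a diagonal extraction over the finitely many edges produces a subsequence with $\sqrt{\rho_n}\to\sqrt{\rho}$ uniformly on $\frakG$ and $\nabla\sqrt{\rho_n}\rightharpoonup\nabla\sqrt{\rho}$ weakly in $L^2(\overline\frakE,\mathfrak m)$. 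Setting $\rho:=(\sqrt\rho)^2$, uniqueness of weak limits gives $\mu=\rho\mathfrak m$, while the identity $\nabla\rho=2\sqrt\rho\,\nabla\sqrt\rho\in L^1$ together with continuity of $\sqrt\rho$ on $\frakG$ yields the regularity $\rho\in W^{1,1}(\overline\frakE)\cap C(\frakG)$ demanded by Definition~\ref{def:dissipation}.

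Next I would pass to the limit in $J_{\mu_n}$. Combining the uniform $L^\infty$-bound on $\sqrt{\rho_n}$ (via $H^1\hookrightarrow L^\infty$ on each edge) with the weak $L^2$-convergence of $\nabla\sqrt{\rho_n}$ yields $\nabla\rho_n\rightharpoonup\nabla\rho$ weakly in $L^2(\mathfrak m)$, so $\nabla\rho_n\,\mathfrak m\rightharpoonup\nabla\rho\,\mathfrak m$ in $\calM(\frakE)$. For the interaction term I would combine the uniform convergence $\rho_n\to\rho$ with the convergence $\nabla W[\mu_n]\to\nabla W[\mu]$ (obtained from $\mu_n\rightharpoonup\mu$ via the representation $\nabla W[\mu](x)=\int_\frakG\partial_x W(x,y)\dd\mu(y)$). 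Summing the two contributions gives $J_{\mu_n}\rightharpoonup J_\mu$ in $\calM(\frakE)$, and Lemma~\ref{lem:BB}\eqref{it:lem:BB:3} concludes
\begin{equation*}
	\calI(\mu) \;=\; 2\calB_2(\mu,J_\mu)
	\;\leq\; 2\liminf_n\calB_2(\mu_n,J_{\mu_n})
	\;=\; \liminf_n\calI(\mu_n).
\end{equation*}

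I expect the main obstacle to lie in the convergence $\nabla W[\mu_n]\to\nabla W[\mu]$: since $W$ is merely Lipschitz, $\partial_xW(\cdot,y)$ exists only a.e.\ and need not be continuous in $y$, so a naive dominated-convergence argument fails. The remedy I have in mind is to regularise $W$ by convolution on each edge, pass to the weak limit in the regularised momentum measures using the already established uniform and weak convergences of $\rho_n$, and then close the approximation using the uniform bound $\|\nabla W[\mu]\|_\infty\leq\Lip(W)$ and weak compactness of bounded signed measures.
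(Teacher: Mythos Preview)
Your argument is correct and takes a genuinely different route from the paper. Both proofs recast $\calI$ as a Benamou--Brenier functional and invoke its joint lower semicontinuity, but the mechanism for obtaining the regularity $\rho\in W^{1,1}(\overline\frakE)\cap C(\frakG)$ and identifying the limit momentum differ. The paper uses only weak $L^1$-convergence of $\rho_n$ (via superlinearity of the entropy), extracts an abstract weak\textsuperscript{*} accumulation point $J$ of $J_{\mu_n}$, and then \emph{a posteriori} identifies $J$ with $(\nabla\rho+\rho\nabla W[\mu])\mathfrak m$ by integration by parts against $C_c^1(\frakE)$ test functions; continuity of $\rho$ across each vertex is obtained through a separate argument, testing against functions supported on pairs of adjacent edges. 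Your approach leverages the one-dimensional structure more directly: the uniform Fisher bound gives $H^1$-compactness of $\sqrt{\rho_n}$, hence uniform convergence on $\frakG$, and continuity at the vertices comes for free since each $\sqrt{\rho_n}\in C(\frakG)$. This is cleaner in the present setting and, incidentally, shows that the hypothesis $\sup_n\calF(\mu_n)<\infty$ is not actually needed --- the dissipation bound alone suffices (cf.\ Lemma~\ref{lem:Fisher-Linfty}). The paper's argument, by contrast, is softer and would transfer to settings without a compact $H^1\hookrightarrow C$ embedding.

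Regarding the obstacle you flag: it is a genuine point (and the paper is itself rather terse here, invoking only ``the boundedness of $\nabla W$''), but your proposed remedy of mollifying $W$ is heavier than necessary. Since $\mu_n\rightharpoonup\mu$ and $W$ is uniformly continuous on the compact set $\frakG\times\frakG$, one has $W[\mu_n]\to W[\mu]$ uniformly on $\frakG$; combined with the uniform Lipschitz bound $\Lip(W[\mu_n])\leq\Lip(W)$, an integration by parts against $C_c^1(\frakE)$ test functions gives $\nabla W[\mu_n]\rightharpoonup\nabla W[\mu]$ weak\textsuperscript{*} in $L^\infty$. Pairing this with your uniform convergence $\rho_n\to\rho$ yields $\rho_n\nabla W[\mu_n]\,\mathfrak m\rightharpoonup\rho\nabla W[\mu]\,\mathfrak m$ directly, without regularisation.
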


\begin{proof}
First note that we can rewrite $\calI(\mu)$ as the integral functional 
\begin{equation}\label{eq:int-funct}
\calI(\mu)=\mathcal G_{\alpha}(\mu,J)=\int_{\frakE}\alpha\big(\frac{\D \mu}{\D\sigma},\frac{\D J}{\D\sigma}\big)\D\sigma,
\end{equation}
where $\alpha:[0,\infty)\times \R\to[0,\infty]$ is the lower semicontinuous and convex function defined by
\begin{equation}\label{eq:alpha}
\alpha(s,u)=\begin{cases}
|u|^{2}/s & \hbox{if }s>0,\\
0 & \hbox{if }s=0  \hbox{ and }u=0,\\
+\infty & \text{otherwise},
\end{cases}
\end{equation}
and $J=\pmb w \mu= (\nabla \rho+\rho\nabla W[\mu])\mathfrak m$. Here $\sigma$ is any measure such that $\mu,J\ll\sigma$; 
its choice is irrelevant by $1$-homogeneity of $\alpha$.

Now, let $\mu_{n}=\rho_{n}\mathfrak m$. 
Lower semicontinuity of $\calF$ and \eqref{eq:energy-diss-bdd} imply that $\calF(\mu)<\infty$. 
Therefore we can write $\mu = \rho\mathfrak m$ for a suitable density $\rho$. 
Superlinearity of $r\mapsto r\log r$ 
implies that $\rho_{n}$ 
converges weakly to $\rho$ in $L^{1}(\frakG,\mathfrak m)$. 

Recall that $\calI(\mu_{n})=\int |\pmb w_{n}|^{2}\D\mu_{n}$ with $U_{n}:=\rho_{n}\pmb w_{n}=\nabla \rho_{n}+\rho_{n}\nabla W[\mu_{n}]$. 
H\"older's inequality and the bound \eqref{eq:energy-diss-bdd} yield that the measures $J_{n}=U_{n}\mathfrak m$ have uniformly bounded total variation. Hence up to extracting a subsequence, we have that $J_{n}$ converges weakly* to a measure $J$ on $\frakE$. Lower semicontinuity of the integral functional $\mathcal G_{\alpha}$ yields 
\begin{equation}\label{eq:lscG}
\mathcal G_{\alpha}(\mu,J)\leq \liminf_{n}\calI(\mu_{n})<\infty.
\end{equation}
This allows us to write $J=\pmb w\rho\mathfrak m$ for a $\pmb w\in L^{2}(\mu)$ and $\mathcal G_{\alpha}(\mu,J)=\int |\pmb w|^{2}\D\mu$. 
Since $\rho_n\in W^{1,1}(\overline\frakE)\cap C(\frakG)$, we have for any function  $s\in C_c^1(\frakE)$ by integration by parts,
\begin{equation}
	\begin{aligned}
		\label{eq:IBP}
		-\int_\frakG 
		\rho_n \nabla s \dd\lambda 
		&=
		\int_\frakG s \nabla\rho_n \dd \lambda 
		\\& = \int_\frakG s e^V \big(\nabla\rho_n +\rho_n\nabla W[\mu_n]\big)  \dd\mathfrak m - \int_{\frakG}s e^V \nabla W[\mu_n]\dd\mu_n\\
		&= \int_\frakG s e^V\dd J_n - \int_\frakG s e^V \nabla W[\mu_n]\dd\mu_n.
	\end{aligned}
\end{equation}
The convergence of $J_{n}$ to $J$ weakly and $\rho_n$ to $\rho$ in $L^1$ together with the fact that $J= \pmb w \rho\mathfrak m$ gives no mass to $\sf V$ and the boundedness of  $\nabla W$ allows us to pass to the limit and obtain
\begin{equation*}
	-\int_\frakG  \rho\nabla s
	\dd\lambda 
	= \int_\frakG  s e^V \dd J - \int_\frakG s e^V\nabla W[\mu]\dd\mu 
	= \int_\frakG  s   \pmb w \rho \dd\lambda  - \int_\frakG s \rho\nabla W[\mu]\dd\lambda 
\end{equation*}
for all $s \in C_c^1(\frakE)$.
We infer that $\rho\in W^{1,1}(\overline\frakE)$ and that  $\rho\pmb w=\nabla \rho +\rho\nabla W[\mu]$. 
In particular, $\rho\in C(\overline\frakE)$ by the Sobolev embedding theorem. 

Let us now show that $\rho\in C(\frakG)$.
For this purpose, note that each pair of adjacent edges $(e, f)$ 
can be identified with the interval $[-\ell_e,\ell_f]$. 
Consider $s\in C^1(\frakE)$ such that $s=0$ for all $g\neq e,f$ and $s=r$ on $e,f$ for some $r\in C_c^1(-\ell_e,\ell_f)$. 
Repeating the argument above, we infer that $\rho\in W^{1,1}([-\ell_e,\ell_f])$ and in particular that $\rho$ is continuous at $0$. 
We thus obtain that $\rho\in C(\frakG)$.
Hence from \eqref{eq:lscG} we obtain the claim.
\end{proof}

Let us denote by $\mathcal I_0$ the energy dissipation functional with $W=0$, more precisely, if $\mu=\rho\mathfrak m$ with $\rho\in W^{1,1}(\overline\frakE)\cap C(\frakG)$ with $\nabla\rho=\boldsymbol{w}\rho$ for some $\boldsymbol{w}\in L^2(\mu)$ we set 
\[\mathcal I_0(\mu) := \int_\frakE |\boldsymbol{w}|^2\dd\mu.\]
Otherwise, we set $\mathcal I_0(\mu)=+\infty$. Similarly as for $\calI$ we can write $\calI_0(\mu)=\int\alpha(\rho,\nabla\rho)\dd\mathfrak m$ with $\mu=\rho\mathfrak m$ where $\alpha$ is the function in \eqref{eq:alpha}. Then $\mathcal I_0$ is a convex and lower semi-continuous functional by the previous lemma. We note that under the assumption that $W$ is Lipschitz, $\mathcal I(\mu)$ is finite if and only if $\mathcal I_0(\mu)$ is finite.

Next, we observe that finiteness of the $\mathcal I_0$ implies a quantitative $L^\infty$ bound on the density.

\begin{lemma}\label{lem:Fisher-Linfty}
 For $\mu\in \calP(\frakG)$ with $\calI_0(\mu)<\infty$ and $\mu=\rho \mathfrak m$ we have $\rho\in C(\frakG)$ with 
 \[\|\rho\|_\infty \leq A\sqrt{\calI_0(\mu)},\]
 where the constant $A>0$ depends on $\|V\|_\infty$.
\end{lemma}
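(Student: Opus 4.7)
The natural substitution is $u := \sqrt{\rho}$. Since $\mu = \rho\mathfrak m$ and $\rho \in W^{1,1}(\overline\frakE) \cap C(\frakG)$ with $\nabla \rho = \boldsymbol w \rho$, the chain rule yields $\nabla u = \boldsymbol w \sqrt\rho/2$, so
\[
    \int_\frakE |\nabla u|^2 \dd\lambda
    = \frac{1}{4}\int_\frakE \frac{|\nabla\rho|^2}{\rho} \dd\lambda
    \leq \frac{e^{\|V\|_\infty}}{4}\calI_0(\mu),
\]
where we used $\mathfrak m = e^{-V}\lambda$ and the Lipschitz continuity of $V$ on the compact graph $\frakG$ to compare $\lambda$ and $\mathfrak m$. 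The mass normalization $\int_\frakG \rho \dd\mathfrak m = 1$ likewise gives $\|u\|_{L^2(\frakG,\lambda)}^2 = \int_\frakG \rho \dd\lambda \leq e^{\|V\|_\infty}$. Hence $u$ lies in $W^{1,2}(\overline\frakE) \cap C(\frakG)$ with explicit bounds.

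Next I would invoke the one-dimensional Sobolev embedding $W^{1,2} \hookrightarrow L^\infty$, applied edge by edge. On each metric edge, parametrised as an interval $I$ of length $\ell_e$, the standard identity $u(x)^2 - u(y)^2 = 2\int_y^x u\,\nabla u\dd s$ combined with Cauchy--Schwarz and a choice of $y$ minimising $u$ on $I$ yields
\[
    \|u\|_{L^\infty(I)}^2
    \leq \frac{1}{\ell_e}\|u\|_{L^2(I)}^2 + 2\|u\|_{L^2(I)}\|\nabla u\|_{L^2(I)}.
\]
Taking the maximum over the finitely many edges $e \in \sE$ and inserting the bounds above gives $\|u\|_\infty^2 = \|\rho\|_\infty$ controlled by a constant depending on $\|V\|_\infty$ and the geometry of $\frakG$ (through $\lambda(\frakG)$ and $\min_e \ell_e$), with the announced square-root dependence on $\calI_0(\mu)$.

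The continuity $\rho \in C(\frakG)$ across vertices is already guaranteed by the definition of $\calI_0$; no further gluing argument is needed. The only subtle point is the chain rule for $u = \sqrt\rho$ across points where $\rho$ vanishes, but this is a standard Stampacchia-type argument: one works with $u_\delta := \sqrt{\rho+\delta} - \sqrt\delta$ and passes to the limit $\delta \to 0$, using monotone convergence and the dominated integrand $|\nabla u_\delta|^2 \leq |\nabla\rho|^2/(4(\rho+\delta))$.

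I expect the main (though minor) technical obstacle to be precisely this justification of the chain rule in the absence of a positive lower bound on $\rho$; the $L^\infty$ bound itself follows immediately from the one-dimensional Sobolev inequality once $\sqrt\rho \in W^{1,2}$ is established.
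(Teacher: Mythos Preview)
Your argument is correct but follows a different route from the paper. You pass to $u=\sqrt\rho$, establish $u\in W^{1,2}(\overline\frakE)$ via the Fisher-information identity $4\int|\nabla u|^2\,\dd\lambda = \int|\nabla\rho|^2/\rho\,\dd\lambda$, and then invoke the one-dimensional embedding $W^{1,2}\hookrightarrow L^\infty$ edgewise. The paper instead works with $\rho$ directly: from $\nabla\rho=\boldsymbol w\rho$ and Cauchy--Schwarz,
\[
\int_\frakE|\nabla\rho|\,\dd\lambda \;\leq\; e^{\|V\|_\infty}\int_\frakE|\boldsymbol w|\,\dd\mu \;\leq\; e^{\|V\|_\infty}\sqrt{\calI_0(\mu)},
\]
and then applies the embedding $W^{1,1}\hookrightarrow L^\infty$ on each edge. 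The paper's approach is shorter and sidesteps entirely the chain-rule issue for $\sqrt\rho$ at zeros of $\rho$ --- precisely the point you (correctly) flagged as the only delicate step in your argument and handled via the $\sqrt{\rho+\delta}$ regularisation. Your square-root substitution is the more classical device in Fisher-information estimates and would transfer more readily to higher dimensions, but here the direct $W^{1,1}$ bound is cleaner. Note, incidentally, that both arguments actually produce a bound of the form $\|\rho\|_\infty\leq A\big(1+\sqrt{\calI_0(\mu)}\big)$, since the Sobolev embedding contributes an additive term coming from $\|\rho\|_{L^1}$ (respectively $\|u\|_{L^2}$); the purely multiplicative form stated in the lemma fails when $\rho$ is constant.
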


\begin{proof}
The continuity of $\rho$ follows from the definition of $\calI_0(\mu)$. 
Using H\"older's we obtain with $\nabla\rho=\rho\boldsymbol{w}$,
\begin{align*}
\int |\nabla \rho| \dd\lambda &\leq e^{\|V\|_\infty} \int |\nabla \rho| \dd\mathfrak m \leq e^{\|V\|_\infty}\Big(\int |\boldsymbol{w}|^2\dd\mu\Big)^\frac12.
\end{align*}
The claim then follows immediately from the Sobolev embedding theorem applied to each of the finitely many edges. 
\end{proof}

\subsection{Energy-dissipation equality}

The main result of this section is the following gradient flow characterisation of the diffusion equation \eqref{eq:MKVeq}.

\begin{theorem}\label{thm:EDE}
	For any 2-absolutely continuous curve $(\mu_t)_{t\in[0,T]}$ in $(\calP(\frakG),W_2)$ with $\mathcal F(\mu_0)<\infty$ we have 
	\begin{equation*}
		\mathcal L_{T}(\mu) 
		:= \mathcal F(\mu_T) -\mathcal F(\mu_{0})+ \frac{1}{2} \int_0^T \abs{\dot \mu}^2(r) + \calI(\mu_r) \dd r \geq 0 .
		\end{equation*}
Moreover, we have $\mathcal L_{T}(\mu)=0$ if and only if
		$\mu_t=\rho_{t} \mathfrak m$ is a weak solution to \eqref{eq:MKVeq} 		in the sense of Definition~\ref{def:MKV-weak}.
	\end{theorem}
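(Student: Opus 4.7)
The plan is to derive both implications of the theorem from a single chain rule for $\calF$ along 2-absolutely continuous curves, combined with a Young-type inequality. Without loss of generality I assume $\int_0^T \calI(\mu_t)\dd t < \infty$, since otherwise $\mathcal L_T(\mu) = +\infty$ and the assertion is trivial. Combined with the velocity field $v_t \in L^2(\mu_t)$ provided by Theorem \ref{thm:continuity1}\eqref{it:continuity1i}, the central claim is that $t \mapsto \calF(\mu_t)$ is absolutely continuous on $[0,T]$ with
\begin{equation*}
	\tfrac{\D}{\D t}\calF(\mu_t)
	= \int_{\frakE} v_t \cdot \boldsymbol{w}_t \dd \mu_t \qquad \text{for a.e.\ } t\in(0,T),
\end{equation*}
where $\boldsymbol{w}_t$ is the vector field of Definition \ref{def:dissipation}, i.e.\ $\nabla\rho_t + \rho_t \nabla W[\mu_t] = \boldsymbol{w}_t \rho_t$ with $\mu_t = \rho_t \mathfrak m$.

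Once this chain rule is available, Cauchy--Schwarz followed by Young's inequality gives
\begin{equation*}
	\tfrac{\D}{\D t}\calF(\mu_t) \geq -\|v_t\|_{L^2(\mu_t)}\|\boldsymbol{w}_t\|_{L^2(\mu_t)} \geq -\tfrac{1}{2}|\dot\mu|^2(t) - \tfrac{1}{2}\calI(\mu_t),
\end{equation*}
where I also use $\|v_t\|_{L^2(\mu_t)} \leq |\dot\mu|(t)$ from Theorem \ref{thm:continuity1}. Integration over $[0,T]$ yields $\mathcal L_T(\mu)\geq 0$. In the equality case $\mathcal L_T(\mu)=0$ the integrand vanishes pointwise a.e., forcing equality in both Cauchy--Schwarz and Young; hence $v_t = -\boldsymbol{w}_t$ in $L^2(\mu_t)$ for a.e.\ $t$. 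Substituting $J_t = v_t\mu_t = -(\nabla\rho_t + \rho_t\nabla W[\mu_t])\mathfrak m$ into the weak continuity equation \eqref{eq:ce_weak1} recovers exactly \eqref{eq:MKVweak_solution}, so $(\mu_t)$ is a weak solution of \eqref{eq:MKVeq}. Conversely, for a weak solution the relation $v_t = -\boldsymbol{w}_t$ holds by construction; the chain rule then gives $\tfrac{\D}{\D t}\calF(\mu_t) = -\calI(\mu_t)$, while Theorem \ref{thm:continuity1}\eqref{it:continuity1ii} yields $|\dot\mu|^2(t) \leq \calI(\mu_t)$. Combined, $\mathcal L_T(\mu)\leq 0$, and together with the lower bound above we conclude $\mathcal L_T(\mu)=0$.

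The main obstacle is establishing the chain rule, since Example \ref{ex:three_leaves} rules out $\kappa$-convexity of the entropy along $W_2$-geodesics and the usual AGS machinery does not apply. My approach is to use the regularisation from Section \ref{subsec:reg_CE}. For each small $\varepsilon>0$ the regularised density $\rho_t^\varepsilon$ on $\frakG_\ext$ is uniformly bounded by $1/(2\varepsilon)$ via Proposition \ref{prop:measure-reg}\eqref{it:regularisation}, and $(\mu_t^\varepsilon, J_t^\varepsilon)$ satisfies the modified continuity equation \eqref{eq:ce_weak_regularised1}. At this smooth level the chain rule for $\calE+\calV+\calW$ applied to $\mu^\varepsilon$ can be justified directly by integration by parts, testing each piece against $\log(\eta_t^\varepsilon+\delta)$, $V$, and $W[\mu_t^\varepsilon]$ respectively and letting $\delta\searrow 0$. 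Passing $\varepsilon\to 0$ is the delicate step: convergence of the entropy follows from the weak convergence $\mu_t^\varepsilon\rightharpoonup \mu_t$ in Proposition \ref{prop:measure-reg}\eqref{it:convergence} combined with convexity of $r\mapsto r\log r$ and the density bound of Lemma \ref{lem:Fisher-Linfty}; the potential and interaction terms pass directly by Lipschitz continuity of $V$ and $W$; and the dissipation term is controlled using the kinetic-energy bound \eqref{eq:approx_est_1} together with the lower semicontinuity from Lemma \ref{lem:dissipation-lsc-convex}. Lemma \ref{lem:no-momentum-vertices} guarantees that vertex contributions vanish in the limit, so the resulting chain-rule integrand is indeed $\int_\frakE v_t\cdot\boldsymbol{w}_t \dd\mu_t$.
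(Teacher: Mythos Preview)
Your reduction of the theorem to a chain rule plus Cauchy--Schwarz/Young is exactly the paper's strategy, and your deduction of both the inequality $\mathcal L_T(\mu)\geq 0$ and the equality case from the chain rule is correct and matches the paper's argument. The issue lies in your sketch of the chain rule itself, which departs from the paper's approach and has genuine gaps.

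The paper proves the chain rule (Proposition~\ref{prop:chain-rule}) via a \emph{time} mollification $\rho_t\mapsto\rho_t^\delta$ together with a regularisation $F_\eps(r)=(r+\eps)\log(r+\eps)-\eps\log\eps$ of the entropy integrand; the spatial regularisation of \S\ref{subsec:reg_CE} is applied only to the \emph{test function} $g=F_\eps'(\rho^\delta)$, to make it admissible in the continuity equation. The time mollification is what provides differentiability of $t\mapsto\calF_\eps(\mu_t^\delta)$ and, via Jensen and Lemma~\ref{lem:Fisher-Linfty}, a uniform-in-$t$ bound on $\|\rho_t^\delta\|_\infty$. Your proposal instead regularises the \emph{measure} in space, passing to $\mu_t^\eps$ on $\frakG_\ext$. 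This does not improve time regularity: the map $t\mapsto\rho_t^\eps(x)$ is still merely absolutely continuous, with no uniform control allowing you to differentiate $\int F(\rho_t^\eps)\dd\lambda$ under the integral. Moreover, the modified continuity equation \eqref{eq:ce_weak_regularised1} carries the factor $\alpha^\eps$, and $\mu_t^\eps$ lives on $\frakG_\ext$, so $V$, $W$, and $\mathfrak m$ would need extensions whose contributions must be shown to vanish in the limit. Finally, the passage $\eps\to0$ on the right-hand side requires \emph{convergence} of $\int_0^T\int\langle v_t^\eps,\boldsymbol{w}_t^\eps\rangle\dd\mu_t^\eps\dd t$, but the tools you invoke---the kinetic energy bound \eqref{eq:approx_est_1} and lower semicontinuity of $\calI$ (Lemma~\ref{lem:dissipation-lsc-convex})---yield only one-sided inequalities for $\int|v^\eps|^2\dd\mu^\eps$ and $\int|\boldsymbol{w}^\eps|^2\dd\mu^\eps$ separately, not convergence of the cross term. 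The paper avoids this by obtaining pointwise a.e.\ convergence of $\boldsymbol{w}^{\eps,\delta}$ and $U^\delta$ together with an integrable majorant built from $\alpha(\rho,\nabla\rho)$ and $\alpha(\rho,U)$ via Jensen's inequality for the time mollification; this dominated-convergence structure is not available in your spatial scheme.
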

	
The main step in proving this result is to establish a chain rule for the free energy $\mathcal F$ along absolutely continuous curves in $(\calP(\frakG), W_2)$. 
Recall the definition of $\alpha$ from \eqref{eq:alpha}.

\begin{proposition}
	[Chain rule]
	\label{prop:chain-rule}
 Let $(\mu_t)_{t\in[0,T]}$ be a $2$-absolutely continuous curve in $(\calP(\frakG), W_2)$ satisfying
 	$\int_0^T\calI(\mu_t)\dd t < +\infty$.
Write $\mu_t = \rho_t \mathfrak m$ and let $J_t=U_t\mathfrak m$ be an optimal family of momentum vector fields, i.e., $(\mu_t, J_t)_{t \in [0,T]}$ solves the continuity equation and 
$|\dot\mu|^{2}(t) = 
	\int
		\alpha( \rho_{t}, U_{t} )
	\D\mathfrak m$.
Let $\rho_t\boldsymbol{w}_{t}=\nabla\rho_{t}+\rho_{t}\nabla W[\mu_t]$ as in Definition \ref{def:dissipation}. Then, 
	$t \mapsto \calF(\mu_t)$
is absolutely continuous and we have
 \begin{align}\label{eq:chain-rule}
 \frac{\dd}{\dd t}
		\calF(\mu_t) 
		= \int_\frakE \langle\boldsymbol{w_{t}},\D J_t\rangle \quad \text{ for a.e.~}t\in[0,T].
 \end{align}
\end{proposition}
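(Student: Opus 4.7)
The strategy is to split $\calF = \calE + \calV + \calW$, where $\calE(\mu):=\Ent(\mu|\lambda)$ is the Boltzmann entropy on $\frakG$ (so that $\calE_V = \calE + \calV$ on $\{\mu\ll\lambda\}$), prove the chain rule for each summand separately, and combine. The interaction and potential pieces are controlled by testing the continuity equation against suitable approximations of $W[\mu_t]$ and $V$, while the entropy---the technical heart of the argument---is treated via the spatial regularisation of Section~\ref{subsec:reg_CE}.

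\textbf{Interaction and potential energies.} By symmetry of $W$ and bilinearity,
\begin{equation*}
	\calW(\mu_t)-\calW(\mu_s) = \int \tfrac{W[\mu_t]+W[\mu_s]}{2}\dd(\mu_t-\mu_s),
\end{equation*}
and since $W[\mu_r]$ is Lipschitz on $\frakG$ uniformly in $r$, approximation by functions in $C^1(\overline{\frakE})\cap C(\frakG)$ (Proposition~\ref{prop:function-reg}) combined with the continuity equation yields absolute continuity of $t\mapsto\calW(\mu_t)$ and $\tfrac{d}{dt}\calW(\mu_t)=\int_\frakE\nabla W[\mu_t]\cdot\dd J_t$ for a.e.\ $t$. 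For $\calV$, although $V$ may jump at the vertices, Lemma~\ref{lem:no-momentum-vertices} together with $\mu_t\ll\lambda$ (implicit in $\calI(\mu_t)<\infty$ via Lemma~\ref{lem:Fisher-Linfty}) permit approximating $V$ by functions in $C^1(\overline{\frakE})\cap C(\frakG)$ that agree with $V$ outside a shrinking neighbourhood of $\sV$; the error terms concentrate on vertices and vanish in the limit, yielding $\tfrac{d}{dt}\calV(\mu_t)=\int_\frakE\nabla V\cdot\dd J_t$.

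\textbf{Entropy via regularisation.} Formally $\tfrac{d}{dt}\calE(\mu_t)=\int\nabla\log\tilde\rho_t\cdot\dd J_t$ (with $\tilde\rho_t:=d\mu_t/d\lambda$), but $\log\tilde\rho_t$ is singular where $\tilde\rho_t$ vanishes. I would apply Definition~\ref{def:regularise-functions} to $(\mu_t,J_t)$, producing a curve $(\mu_t^\eps,J_t^\eps)$ on the supergraph $\frakG_\ext$ with bounded continuous density $\rho_t^\eps\le 1/(2\eps)$, solving the modified continuity equation~\eqref{eq:ce_weak_regularised1}. On $\frakG_\ext$ the function $\log(\rho_t^\eps+\delta)$ is admissible as a test function for any $\delta>0$; testing the regularised continuity equation against it and sending $\delta\to 0$ (justified by the $L^\infty$ bound on $\rho_t^\eps$) gives an exact chain rule for $\calE(\mu_t^\eps)$. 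To pass to the limit $\eps\to 0$, Jensen's inequality applied to the averaging kernel controls $\calE(\mu_t^\eps)$ from above by $\calE(\mu_t)$ up to an $O(\eps)$ correction coming from the stretching factor $\alpha^\eps$; combined with lower semicontinuity of $\calE$ and weak convergence $\mu_t^\eps\rightharpoonup\mu_t$ (Proposition~\ref{prop:measure-reg}\ref{it:convergence}) this forces $\calE(\mu_t^\eps)\to\calE(\mu_t)$. The kinetic-energy bound of Proposition~\ref{prop:measure-reg}\ref{it:energy} together with $\alpha^\eps\to 1$ then allow the integrand to be passed to the limit, giving $\tfrac{d}{dt}\calE(\mu_t)=\int_\frakE\nabla\log\tilde\rho_t\cdot\dd J_t$ for a.e.\ $t$.

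\textbf{Conclusion and main obstacle.} Summing the three derivatives and using $\nabla\log\tilde\rho_t + \nabla V = \nabla\log\rho_t$ (so that $\nabla\log\rho_t+\nabla W[\mu_t]=\boldsymbol{w}_t$ with $\rho_t=d\mu_t/d\mathfrak m$) produces the stated chain rule $\tfrac{d}{dt}\calF(\mu_t)=\int_\frakE\boldsymbol{w}_t\cdot\dd J_t$. Absolute continuity of $t\mapsto\calF(\mu_t)$ follows from the Cauchy--Schwarz estimate $|\int_\frakE\boldsymbol{w}_t\cdot\dd J_t|\le\sqrt{\calI(\mu_t)}\,|\dot\mu|(t)\in L^1(0,T)$. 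The hardest step is the $\eps\to 0$ passage in the entropy: identifying the weak-$L^2$ limit of the nonlinear Fisher-type integrand $\alpha^\eps\nabla\log\rho_t^\eps$ with $\nabla\log\tilde\rho_t$, uniformly in $t$, while controlling the extra mass the regularisation places on the auxiliary edges of $\frakG_\ext$ and the potential degeneracy of $\tilde\rho_t$. A Mosco-type convergence argument for the functional $\calI_0$ along the regularisation, grounded in Lemma~\ref{lem:dissipation-lsc-convex} and the bound $\int_0^T\calI(\mu_t)\,\dd t<\infty$, appears to be the natural mechanism to make this rigorous.
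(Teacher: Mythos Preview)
Your outline for $\calW$ is fine, and bundling $\calV$ into the entropy via $\mathfrak m=e^{-V}\lambda$ (as the paper does) or treating it separately as you do are both workable. The substantive issue is the entropy part, where your route diverges from the paper's and leaves two real gaps.

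\textbf{Gap 1: no time regularity.} You propose to test the modified continuity equation \eqref{eq:ce_weak_regularised1} against $\log(\rho_t^\eps+\delta)$. But that equation gives $\frac{\D}{\D t}\int\varphi\,\D\mu_t^\eps=\int\alpha^\eps\nabla\varphi\cdot\D J_t^\eps$ only for \emph{time-independent} test functions $\varphi$. Taking $\varphi=\log(\rho_t^\eps+\delta)$ at a fixed time $s$ produces $\frac{\D}{\D t}\big|_{t=s}\int\log(\rho_s^\eps+\delta)\,\D\mu_t^\eps$, which is not $\frac{\D}{\D t}\int F_\delta(\rho_t^\eps)\,\D\lambda$. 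To differentiate the nonlinear functional $\calE(\mu_t^\eps)$ you need $\partial_t\rho_t^\eps$ to make sense, and the spatial averaging of Definition~\ref{def:regularise-functions} does nothing to improve time regularity. This is exactly why the paper regularises in \emph{time}: it convolves $\rho_t$ with a kernel $\eta^\delta$ in the $t$-variable, producing $\rho_t^\delta$ which is smooth in $t$ (so differentiation under the integral is legitimate) and, via Lemma~\ref{lem:Fisher-Linfty} and convexity of $\calI_0$, uniformly bounded in $L^\infty$. The spatial regularisation of Section~\ref{subsec:reg_CE} enters the paper's proof only in the auxiliary role of approximating the already-bounded function $g^{\eps,\delta}=F_\eps'(\rho^\delta)$ by admissible test functions in $C^1(\overline\frakE)\cap C(\frakG)$.

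\textbf{Gap 2: the $\eps\to0$ limit.} Even granting a chain rule for $\calE(\mu_t^\eps)$, passing $\eps\to0$ in $\int\alpha^\eps\nabla\log\rho_t^\eps\cdot\D J_t^\eps$ requires control of $\int|\nabla\log\rho_t^\eps|^2\,\D\mu_t^\eps$, i.e., a bound on $\calI_0(\mu_t^\eps)$ in terms of $\calI_0(\mu_t)$. Proposition~\ref{prop:measure-reg}\eqref{it:energy} bounds the \emph{kinetic} energy (the Benamou--Brenier action of the pair $(\mu^\eps,J^\eps)$), not the Fisher information of $\mu^\eps$; these are unrelated. Nothing in Section~\ref{subsec:reg_CE} provides such a Fisher bound, and on the auxiliary edges of $\frakG_\ext$ the density $\rho_t^\eps$ can have large gradients (it drops from order $1/(2\eps)$ to $0$ over length $2\eps$). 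The Mosco-type mechanism you invoke is not supplied by Lemma~\ref{lem:dissipation-lsc-convex}, which gives only lower semicontinuity. The paper sidesteps this entirely: after time regularisation the limit $\delta\to0$ is handled by dominated convergence with the explicit majorant $[\alpha(\rho,\nabla\rho)]^\delta+C(\rho^\delta+\eps)+\tfrac12[\alpha(\rho,U)]^\delta$, integrable by \eqref{eq:finiteIA}, and the subsequent $\eps\to0$ in the log-regularisation is monotone.

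In short, the missing idea is time mollification; the spatial regularisation of measures is the wrong tool here.
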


\begin{proof}
We first note that the assumptions ensure that also $\int_0^T\calI_0(\mu_t)\dd t<\infty$.
Hence, we have
\begin{equation}\label{eq:finiteIA}
\int_0^T\int_\frakE \alpha(\rho_t,\nabla\rho_t)\dd\mathfrak m\dd t <\infty,\qquad \int_0^T\int_{\frakE}\alpha(\rho_t,U_t)\dd\mathfrak m \dd t<\infty.
\end{equation}

We proceed by a twofold regularisation. 
Using a family \math{(\eta^\delta)_{\delta>0}} of even and smooth approximation kernels with compact support in $[-\delta, \delta]$, we  regularise in time via
\begin{equation*}
\rho^{\delta}_t := \int_{-\delta}^\delta \eta^\delta(s) \rho_{t-s} \dd s,
 \end{equation*}
 and set  $\mu^{\delta}_{t}=\rho^{\delta}_{t}\mathfrak m$. Here we extend $\rho$ to a curve on the time-interval 
 	$[-\delta, T+\delta]$ 
which is constant on $[-\delta,0]$ and $[T,T+\delta]$. 
Similarly defining $J_{t}^{\delta}$ as the time-regularisation of $J_{t}$ we obtain that $(\mu^\delta,J^\delta)$ is a solution to the continuity equation. Convexity of $\calI_0$ and the Benamou--Brenier functional yield that $\int_0^T\calI_0(\mu^\delta_t)\dd t$ and $\int_0^T|\dot\mu^\delta|^2(t)\dd t<\infty$. Hence, \eqref{eq:finiteIA} also holds with $\rho^\delta_t, U^\delta_t$ in place of $\rho_t,U_t$. 
Moreover, we must have $U^\delta=0$ on $\{\rho^\delta=0\}$.

Further, we regularise the logarithm and define for $\eps>0$ the function $F_\eps:[0,\infty)\to \R$ by setting
\[F_\eps(r)= (r+\eps)\log(r+\eps)-\eps\log\eps.\]
Then we define the regularized free energy $\calF_\eps$ of $\mu=\rho\mathfrak m$ via
\[\calF_\eps(\mu) = \int_{\frakG} F_\eps(\rho)\dd\mathfrak m + \frac12\int_{\frakG\times\frakG} W (x,y)\dd\mu(x)\dd\mu(y).\]
Let us set $g^{\eps,\delta}=F'_\eps(\rho^{\delta})=1+\log(\rho^\delta+\eps)$. Note that by Lemma \ref{lem:Fisher-Linfty}, $\rho_\delta$ is bounded and thus also $g^{\eps,\delta}$ is bounded. We will first show that
\begin{align}
\calF_\eps(\mu^\delta_T)-\calF_\eps(\mu^\delta_0) 
\label{eq:chain-reg1}
&= \int_0^T\int_{\frakG}\langle\nabla g^{\eps,\delta}_t+\nabla W[\mu^\delta_t],\dd J^\delta_t\rangle\dd t.
\end{align}
Then passing to the limit $\delta,\eps\to 0$ will yield the claim. 

While establishing \eqref{eq:chain-reg1} we write $g$ instead of $g^{\eps,\delta}$ for simplicity. We have 
\begin{align*}\nonumber
\calF_\eps(\mu^\delta_T)-\calF_\eps(\mu^\delta_0) 
&= \int_0^T\int_{\frakG}\big(F'_\eps(\rho^\delta) + W[\mu^\delta_t]\big)\partial_t\rho^\delta_t\dd\mathfrak m\dd t.
\end{align*}
Differentiation under the integral sign is justified by boundedness of $F'_\eps(\rho^\delta)$ and $W$ and the regularity in time of $\rho^\delta$. Note that $W[\mu^\delta_t]$ is an admissible test function in the continuity equation for $(\mu^\delta,J^\delta)$. Thus, to establish \eqref{eq:chain-reg1} it remains to show that the continuity equation can also be used on the function $g=F'_\eps(\rho^\delta)$.
Recall that $g$ is bounded with $\nabla g= \nabla \rho^\delta/(\rho^\delta+\eps)$. We can approximate $g$ uniformly by functions $g^\alpha\in C^1(\overline\frakE\times[0,T])\cap C(\frakG\times[0,T])$ as follows. First extend $g$ to $\frakG_{\rm ext}\times[0,T]$ with constant values on the additional edge incident to vertex $v$ equal to the value of $g$ at $v$. Then we apply the regularising procedure \eqref{eq:kernel_approx1_function1}. The continuity equation for $(\mu^\delta,J^\delta)$ yields
\begin{align*}
\int_0^T\int_{\frakG} g^\alpha_t\partial_t\rho^\delta_t\dd\mathfrak m\dd t= 
\int_0^T\int_{\frakE}\langle\nabla g^\alpha_t,\dd J^\delta_t\rangle.
\end{align*}
Passing to the limit as $\alpha\to0$ then will finish the proof of \eqref{eq:chain-reg1}.
Convergence of the left hand side is immediate since $g$ is bounded and so $g^\alpha$ is bounded uniformly in $\alpha$. For the right hand side we estimate with $J^\delta= U^\delta\mathfrak m$:
\begin{align*}
\bigg|\int_0^T\int_\frakE\langle\nabla g^\alpha_t-\nabla g_t,\dd J^\delta_t\rangle\dd t\bigg| 
	& \leq \bigg(\int_0^T\int_{\frakE}|\nabla g^\alpha_t-\nabla g_t|^2\rho^\delta_t\dd\mathfrak m\dd t\bigg)^\frac12
\\ & \qquad \times	
	\bigg(\int_0^T\int_\frakE\alpha(\rho^\delta_t,U^\delta_t)\dd\mathfrak m \dd t\bigg)^\frac12 .
\end{align*}
The second factor is finite as noted above. To estimate the first factor, we recall that $\rho^\delta$ is bounded. Hence for a suitable constant $C < \infty$ 
\begin{align*}
\int_0^T\int_{\frakE}|\nabla g^\alpha_t-\nabla g_t|^2\rho^\delta_t\dd\mathfrak m\dd t 
\leq
C \int_0^T\int_{\frakE}|\nabla g^\alpha_t-\nabla g_t|^2\dd\lambda\dd t 
\end{align*}
Using \eqref{eq:regularised_diff_quotient1} and dominated convergence, the latter term goes to zero as $\alpha \to0$ if we show that 
\[\int_0^T\int_{\frakE}|\nabla g|^2\dd\lambda\dd t<\infty.\]
But using $\nabla g = \nabla \rho^\delta/(\rho^\delta+\eps)$  we can estimate
\begin{align*}
\int_0^T\int_{\frakE}|\nabla g|^2\dd\lambda\dd t
&\leq 
e^{\|V\|_\infty} \int_0^T\calI_0(\mu\delta_t)\dd t<\infty. 
\end{align*}
Thus, \eqref{eq:chain-reg1} is established.
\medskip

 We will now pass to the limits $\delta,\eps\to 0$ in \eqref{eq:chain-reg1}, starting with the right-hand side.
 
 For the limit $\delta\to0$, note that $U^{\delta}\to U$ a.e.~and 
  $\pmb w^{\eps,\delta}\to \pmb w^\eps$ a.e.~as $\delta\to0$ with $\pmb{w}^\eps=\nabla\rho/(\rho+\eps) +\nabla W[\mu]$. Dominated convergence then yields that as $\delta\to0$:
\begin{align*}
\int_{0}^{T}\int_{\frakE}\langle\pmb w^{\eps,\delta}_{t}, U^{\delta}_{t}\rangle\dd\mathfrak m\dd t \rightarrow \int_{0}^{T}\int_{\frakE}\langle\pmb w^\eps_{t}, U_{t}\rangle\dd\mathfrak m\dd t . 
\end{align*}
Indeed,  we have the majorant
\begin{align*}
|\langle\pmb{w}^{\eps,\delta},U^\delta\rangle| &\leq \frac12 |\pmb{w}^{\eps,\delta}|^2(\rho^\delta+\eps) +\frac12\frac{|U^\delta|^2}{\rho^\delta+\eps}
\leq
\alpha\big(\rho^\delta,\nabla\rho^\delta\big)+ C(\rho^\delta+\eps)+\frac12\alpha(\rho^\delta,U^\delta)\\
&\leq
\big[\alpha\big(\rho,\nabla\rho\big)\big]^\delta+ C(\rho^\delta+\eps) +\frac12\big[\alpha(\rho,U)\big]^\delta
\end{align*}
for a suitable constant $C$, using the fact $\nabla W[\rho^\delta]$ is uniformly bounded. Here, $\big[\cdot\big]^\delta$ denotes the time-regularisation of the function in brackets and we have used Jensen's inequality in the last step to interchange the regularisation operation with the convex function $\alpha$.
Since by assumption $\alpha\big(\rho,\nabla\rho\big)$ and $\alpha(\rho,U)$ are integrable on $[0,T]\times\frakE$, the majorant above indeed converges in $L^1\big([0,T]\times\frakE\big)$.

To further pass to the limit $\eps\to0$, we note that $\pmb{w}^\eps\to\pmb{w}$ a.e.~on the set $\{\rho>0\}$. Since $U=0$ a.e.~on the set $\{\rho=0\}$, we conclude that $\langle \pmb{w}^\eps,U\rangle\to\langle\pmb{w},U\rangle$ a.e. Using similar as before the majorant  $|\langle\pmb{w}^{\eps},U\rangle| \leq
\alpha\big(\rho,\nabla\rho\big)+ C \rho +\alpha(\rho,U)/2$, we conclude by dominated convergence.
\medskip

It remains to pass to the limit on the left-hand side in \eqref{eq:chain-reg1}. 
The weak continuity of $t\mapsto \mu_t$ implies that $\mu^\delta_t\rightharpoonup\mu_t$ weakly as $\delta\to0$. This is sufficient to conclude that $\mathcal W(\mu^\delta_t)\to\mathcal W(\mu_t)$. 
Note that for any $\mu=\rho\mathfrak m\in\mathcal P(\frakG)$ we have
\[\int F_\eps(\rho)\dd\mathfrak m = \mathcal E_V(\mu^\eps)-M\eps\log\eps,\]
with $\mu^\eps=(1+M)^{-1}(\mu+\eps\mathfrak m)$ and $M=\mathfrak m(\frakG)$.
Convexity of $r\mapsto r\log r$ and Jensen's inequality yield that $\mathcal E_V(\mu^{\delta,\eps}_t)\leq \mathcal E_V(\mu^\eps_t)$. 
Thus, lower semicontinuity of $\mathcal E_V$ under weak convergence shows that $\mathcal E_V(\mu^{\delta,\eps}_t)\to\mathcal E_V(\mu^\eps_t)$ and hence $\int F_\eps(\rho^\delta_t)\dd\mathfrak m\to \int F_\eps(\rho_t)\dd\mathfrak m$ as $\delta\to0$. 
The limit $\eps\to0$ is then easily achieved by monotone convergence.
From the convergence of the right-hand side of \eqref{eq:chain-reg1} and the assumption that $\calF(\mu_0)<\infty$ we finally conclude that $\calF(\mu_t)<\infty$ for all $t>0$ and that 
\begin{equation*}
 \calF(\mu_T) - \calF(\mu_0) = \int_0^T \int\langle\pmb w_t,\dd J_t\rangle \dd t. 
\end{equation*}
Hence $t\mapsto\calF(\mu_t)$ is absolutely continuous and \eqref{eq:chain-rule} follows.
\end{proof}

We can now prove Theorem \ref{thm:EDE}.

\begin{proof}[Proof of Theorem \ref{thm:EDE}]
Note that the right-hand side of \eqref{eq:chain-rule} may be estimated by means of H\"older's and Young's inequality as 
\begin{equation}\label{eq:chain-young}
\int_\frakE \langle\boldsymbol{w}_{t},U_t\rangle \dd\mathfrak m \geq - \sqrt{\int_{\frakE}\abs{\boldsymbol{w}_{t}}
^{2}\rho_{t}\dd\mathfrak m} \sqrt{\int_{\frakE}\frac{\abs{U_t}^2}{\rho_{t}} \dd\mathfrak m}\geq -\frac{1}{2} \int_\frakE \abs{\boldsymbol{w}_{t}}^2\rho_{t} \dd\mathfrak m -\frac12\int_{\frakE} \frac{\abs{U_t}^2}{\rho_{t}} \dd\mathfrak m.
\end{equation}
Hence, by integrating both sides of \eqref{eq:chain-rule} from $0$ to $T$ we obtain that $\mathcal L_{T}(\mu)\geq 0$. Moreover, we have equality if and only if for a.e.~$t$ and $\mu_{t}$-a.e.~we have $U_t=-\rho_{t}\boldsymbol{w}_{t}$. Now the continuity equation with $J_{t}=U_{t}\mathfrak m = -\rho_{t}\boldsymbol{w}_{t}\mathfrak m =-\nabla \rho_{t} +\rho_{t}\nabla W[\mu_{t}]\mathfrak m $ becomes the weak formulation of \eqref{eq:MKVeq}
\end{proof}

\subsection{Metric gradient flows}

Here, we recast the variational characterisation of Mc\-Kean--Vlasov equations on metric graphs from the previous section in the language of the theory of gradient flows in metric spaces. Let us briefly recall the basic objects. For a detailed account we refer the reader to \cite{AGS}.

Let $(X,d)$ be a complete metric space and let
$E:X\to(-\infty,\infty]$ be a function with proper domain, i.e., the
set $D(E):=\{x:E(x)<\infty\}$ is non-empty.

The following notion plays the role of the modulus of the gradient in
a metric setting.

\begin{definition}[Strong upper gradient]\label{def:upper-grad}
 A function $g:X\to[0,\infty]$ is called a \emph{strong upper
 gradient} of $E$ if for any $x\in AC\big((a,b);(X,d)\big)$ the
 function $g\circ x$ is Borel and
 \begin{align*}
 |E(x_s)-E(x_t)|~\leq~\int_s^tg(x_r)|\dot x|(r)\dd r \quad\forall~a\leq s\leq t\leq
 b\ .
 \end{align*}
\end{definition}

Note that by the definition of strong upper gradient, and Young's
inequality $ab\leq \frac12(a^2+b^2)$, we have that for all $s\leq t$:
\begin{align*}
 E(x_t) - E(x_s) +\frac12 \int_s^t g(x_r)^2 + |\dot x|^2(r)\dd r\geq 0.
\end{align*}

\begin{definition}[Curve of maximal slope]\label{def:curve-max-slope}
 A locally $2$-absolutely continuous curve $(x_t)_{t\in(0,\infty)}$ is
  called a curve of maximal slope of $E$ with respect to its strong upper
 gradient $g$ if $t\mapsto E(x_t)$ is non-increasing and
 \begin{align}\label{eq:cms}
 E(x_t) - E(x_s) +\frac12 \int_s^t g(x_r)^2 + |\dot x|^2(r)\dd r \leq 0 \quad\forall~0< s\leq t.
 \end{align}
 We say that a curve of maximal slope starts from $x_0\in X$ if $\lim_{t\searrow 0}x_t=x_0$.
\end{definition}

Equivalently, we can require equality in \eqref{eq:cms}. If a strong
upper gradient $g$ of $E$ is fixed we also call a curve of maximal
slope of $E$ (relative to $g$) a \emph{gradient flow curve}.

Finally, we define the (descending) metric slope of $E$ as the
function $|\partial E|:D(E)\to[0,\infty]$ given by
\begin{align}\label{eq:metric-slope-def}
 |\partial E| (x) = \limsup_{y\to x}\frac{\max\{E(x)-E(y),0\}}{d(x,y)}.
\end{align}

The metric slope is in general only a weak upper gradient for $E$, see
\cite[Theorem 1.2.5]{AGS}.  In our application to metric graphs we will show that the square root of the
dissipation $\calI$ provides a strong upper gradient for the free energy $\calF$.

\begin{corollary}\label{cor:gf-metric}
 The functional $\sqrt{\calI}$ is a strong upper gradient of $\calF$ on $(\calP(\frakG),W_{2})$. The curves of maximal slope for $\calF$ with respect to~this strong upper gradient coincide with weak solutions to \eqref{eq:MKVeq} satisfying $\int_{0}^{T}\calI(\mu_{t})\dd t<\infty$.
\end{corollary}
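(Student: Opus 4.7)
The plan is to deduce both assertions directly from the chain rule (Proposition~\ref{prop:chain-rule}) and the energy-dissipation theorem (Theorem~\ref{thm:EDE}).

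\textbf{Step 1: $\sqrt{\calI}$ is a strong upper gradient.} Let $(\mu_r)_{r \in [a,b]}$ be a $2$-absolutely continuous curve in $(\calP(\frakG),W_2)$. First, the map $r \mapsto \calI(\mu_r)$ is Borel since $\calI$ is lower semicontinuous on sublevels of $\calF$ by Lemma~\ref{lem:dissipation-lsc-convex}, and extended by $+\infty$ elsewhere, while $r \mapsto \mu_r$ is continuous. Fix $a \leq s \leq t \leq b$. If $\int_s^t \sqrt{\calI(\mu_r)}\,|\dot\mu|(r)\dd r = +\infty$, the strong-upper-gradient inequality is trivial. Otherwise, I would first reduce to the situation $\int_s^t \calI(\mu_r)\dd r < +\infty$ by a truncation argument: for each $n$, let $E_n := \{r \in [s,t] : \calI(\mu_r) \leq n\}$ and restrict attention to these sets, where finiteness is automatic; the contribution from $[s,t]\setminus E_n$ is controlled since $|\dot\mu|$ must vanish there in the limit $n \to \infty$ if the integral of $\sqrt\calI |\dot\mu|$ is finite. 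Once $\int \calI\dd r$ is finite on the relevant interval, Proposition~\ref{prop:chain-rule} yields the absolute continuity of $r \mapsto \calF(\mu_r)$ together with
\begin{equation*}
	\calF(\mu_t) - \calF(\mu_s) = \int_s^t \int_\frakE \langle \boldsymbol{w}_r, U_r\rangle \dd \mathfrak{m}\dd r,
\end{equation*}
where $U_r$ is chosen so that $|\dot\mu|^2(r) = \int \alpha(\rho_r, U_r)\dd \mathfrak m$. Two applications of the Cauchy--Schwarz inequality (first in the inner integral, identifying $\int |\boldsymbol{w}_r|^2 \rho_r\dd \mathfrak m = \calI(\mu_r)$ and $\int |U_r|^2/\rho_r\dd \mathfrak m = |\dot\mu|^2(r)$, then pointwise in $r$) give the desired inequality.

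\textbf{Step 2: curves of maximal slope are weak solutions.} Let $(\mu_t)_{t \in (0,\infty)}$ be a curve of maximal slope for $\calF$ with respect to $\sqrt\calI$. By Definition~\ref{def:curve-max-slope} and the fact that $\calF$ is bounded below, we obtain from \eqref{eq:cms} (applied with $s \searrow 0$) that $\int_0^T \calI(\mu_r)\dd r < +\infty$ for every $T > 0$. In particular the functional $\calL_T$ of Theorem~\ref{thm:EDE} is well defined and Proposition~\ref{prop:chain-rule} is applicable. The inequality $\calL_T(\mu) \geq 0$ from Theorem~\ref{thm:EDE} combined with \eqref{eq:cms} forces $\calL_T(\mu) = 0$, and the equality clause of Theorem~\ref{thm:EDE} then identifies $(\mu_t)$ as a weak solution to \eqref{eq:MKVeq}.

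\textbf{Step 3: weak solutions with $\int \calI \dd t < \infty$ are curves of maximal slope.} Conversely, suppose $(\mu_t)$ is a weak solution to \eqref{eq:MKVeq} with $\int_0^T\calI(\mu_t)\dd t < +\infty$. Then Theorem~\ref{thm:EDE} yields $\calL_T(\mu) = 0$, i.e.,
\begin{equation*}
	\calF(\mu_T) - \calF(\mu_0) + \frac12 \int_0^T \calI(\mu_r) + |\dot\mu|^2(r)\dd r = 0.
\end{equation*}
The same argument applied on any subinterval $[s,t] \subseteq [0,T]$ (using that the weak solution property is local in time) yields the corresponding identity for $[s,t]$, which is exactly \eqref{eq:cms} with equality. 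Monotonicity of $t \mapsto \calF(\mu_t)$ follows from Young's inequality applied to the identity, as in \eqref{eq:chain-young}.

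The main obstacle I anticipate is the first step: the careful bookkeeping needed to deduce the strong-upper-gradient inequality in the general case where $\calI(\mu_r)$ may fail to be integrable, since Proposition~\ref{prop:chain-rule} presupposes integrability. A clean truncation argument, together with lower semicontinuity of $\calI$ along weakly convergent sequences of bounded free energy, should resolve this; once it is settled, Steps~2 and~3 are immediate consequences of the energy-dissipation theorem.
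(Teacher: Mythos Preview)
Your approach matches the paper's: deduce the strong-upper-gradient inequality from the chain rule (Proposition~\ref{prop:chain-rule}) via Cauchy--Schwarz (this is the first inequality in~\eqref{eq:chain-young}), and read off the identification of curves of maximal slope with weak solutions from Theorem~\ref{thm:EDE}. The paper's proof is a two-liner: assuming $\int_0^T\calI(\mu_t)\dd t<\infty$, the chain rule together with~\eqref{eq:chain-young} gives $|\calF(\mu_t)-\calF(\mu_s)|\leq \int_s^t\sqrt{\calI(\mu_r)}\,|\dot\mu|(r)\dd r$, and Theorem~\ref{thm:EDE} handles the second assertion. Your Steps~2 and~3 are exactly this.

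Where you go further is Step~1, attempting to treat curves with $\int\sqrt{\calI}\,|\dot\mu|<\infty$ but $\int\calI=\infty$ by restriction to $E_n=\{r:\calI(\mu_r)\leq n\}$. As written this does not work: $E_n$ is generally not an interval, and Proposition~\ref{prop:chain-rule} is a statement about curves parametrised over intervals, so ``restricting to $E_n$'' and invoking the chain rule there has no clear meaning; moreover the claim that ``$|\dot\mu|$ must vanish on $[s,t]\setminus E_n$ in the limit'' is not what is needed (it only vanishes a.e.\ on $\{\calI=\infty\}$, not on $\{\calI>n\}$). The paper itself simply does not address this case: its proof checks the inequality only under the hypothesis $\int\calI<\infty$. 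This is harmless for the intended applications, since for curves of maximal slope the hypothesis is automatic from~\eqref{eq:cms} and the lower bound on $\calF$, and that is also the stance you should adopt rather than pursue the truncation sketch.
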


\begin{proof}
 For a $2$-absolutely continuous curve with $(\mu_{t})_{t}$ with $\int_{0}^{T}\calI(\mu_{t})\dd t<\infty$, the chain rule \eqref{eq:chain-rule} together with the estimate \eqref{eq:chain-young} yields
 \begin{equation}\label{eq:strong_upper_gradient1}
\abs{\calF(\mu_t) - \calF(\mu_s) } \leq \int_s^t \sqrt{\calI(\mu_r)} \abs{\dot \mu_r} \dd r \qquad \forall s, t \in [0,T]: s \leq t,
\end{equation}
i.e., $\sqrt{\calI}$ is a strong upper gradient. Theorem \ref{thm:EDE} yields the identification of curves of maximal slope.
\end{proof}

The dissipation functional $\calI(\mu)$ can be related to the metric slope of the free energy $\calF$ under suitable conditions on $\mu \in \calP(\frakG)$.

\begin{lemma}\label{lem:slope_Fisher1}
For any \math{\mu \in \calP(\frakG)} we have
		\begin{equation*}
			\calI(\mu) \leq \abs{\partial \calF}^2(\mu).
		\end{equation*}
\end{lemma}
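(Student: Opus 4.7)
The plan is a test-variation argument that lower-bounds the descending slope of $\calF$ by the $L^2(\mu)$-norm of the ``gradient'' $\boldsymbol{w}$. I focus on the case $\calI(\mu)<\infty$: the opposite case, i.e.\ $\calF(\mu)<\infty$ together with $\calI(\mu)=\infty$, would require a separate approximation argument showing that the supremum arising in the variation below already forces $|\partial\calF|(\mu)=\infty$; if moreover $\calF(\mu)=\infty$, then $|\partial\calF|(\mu)=\infty$ trivially. Under the assumption $\calI(\mu)<\infty$, write $\mu=\rho\mathfrak{m}$ with $\rho\in W^{1,1}(\overline\frakE)\cap C(\frakG)$ and $\boldsymbol{w}=\nabla\log\rho+\nabla W[\mu]\in L^2(\mu)$.

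For every test potential $\phi\in C^1(\overline\frakE)\cap C(\frakG)$, I would construct a short-time weak solution $(\mu_t^\phi)_{t\in[0,\delta]}$ to the continuity equation $\partial_t\mu_t+\nabla\cdot(\nabla\phi\,\mu_t)=0$ with $\mu_0^\phi=\mu$. The continuity of $\phi$ at the vertices supplies the Kirchhoff-type compatibility needed for well-posedness and mass conservation on $\frakG$; short-time analysis (with $\nabla\phi$ bounded) preserves the regularity $\rho_t^\phi\in W^{1,1}(\overline\frakE)\cap C(\frakG)$ and keeps $t\mapsto \calI(\mu_t^\phi)$ integrable near $0$. Theorem~\ref{thm:continuity1} then yields
\begin{equation*}
 W_2(\mu,\mu_t^\phi) \leq \int_0^t \|\nabla\phi\|_{L^2(\mu_s^\phi)}\,\dd s = t\,\|\nabla\phi\|_{L^2(\mu)}+o(t),\qquad t\to 0,
\end{equation*}
using the weak continuity of $s\mapsto\mu_s^\phi$ and the continuity of $|\nabla\phi|^2$ on each edge.

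Next, the chain rule of Proposition~\ref{prop:chain-rule} applied along $(\mu_t^\phi)$ gives
\begin{equation*}
 \calF(\mu)-\calF(\mu_t^\phi) = -\int_0^t\!\int_\frakE\langle\boldsymbol{w}_s^\phi,\nabla\phi\rangle\,\dd\mu_s^\phi\,\dd s = -t\int_\frakE\langle\boldsymbol{w},\nabla\phi\rangle\,\dd\mu + o(t).
\end{equation*}
For $\phi$ with $\int\langle\boldsymbol{w},\nabla\phi\rangle\,\dd\mu<0$, the ratio $(\calF(\mu)-\calF(\mu_t^\phi))/W_2(\mu,\mu_t^\phi)$ is positive for small $t$, and taking $t\to 0$ in the definition of the metric slope yields
\begin{equation*}
 |\partial\calF|(\mu) \;\geq\; \frac{-\int_\frakE\langle\boldsymbol{w},\nabla\phi\rangle\,\dd\mu}{\|\nabla\phi\|_{L^2(\mu)}}.
\end{equation*}

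It remains to optimize over $\phi$. Since $\boldsymbol{w}=\nabla(\log\rho+W[\mu])$ on $\{\rho>0\}$ and $\log\rho+W[\mu]$ is continuous on $\frakG$ (after first replacing $\rho$ by $\rho+\eps$ and removing the perturbation via lower semicontinuity of $\calI$ from Lemma~\ref{lem:dissipation-lsc-convex}), a sequence of admissible $\phi_n$ may be chosen with $-\nabla\phi_n\to\boldsymbol{w}$ in $L^2(\mu)$, yielding $|\partial\calF|(\mu)\geq\|\boldsymbol{w}\|_{L^2(\mu)}=\sqrt{\calI(\mu)}$. The main obstacle is the first step: branching at vertices prevents the use of a flow-map representation of the test curve, so $(\mu_t^\phi)$ must be built as a weak solution of the continuity equation with explicitly prescribed vertex conditions, and the sharp leading-order bound on $W_2(\mu,\mu_t^\phi)$ must be verified at this level of generality. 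A secondary difficulty is the density of vertex-continuous gradients in the $L^2(\mu)$-closure containing $\boldsymbol{w}$, especially on the zero set of $\rho$.
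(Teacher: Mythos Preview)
Your proposal reverses the logical direction of the argument and, in doing so, sidesteps the substantive content of the lemma. The inequality $\calI(\mu)\le|\partial\calF|^2(\mu)$ has bite precisely when $|\partial\calF|(\mu)<\infty$: it asserts that finiteness of the metric slope \emph{forces} the structural conditions in the definition of $\calI$ (namely $\rho\in W^{1,1}(\overline\frakE)\cap C(\frakG)$ and $\nabla\rho+\rho\nabla W[\mu]=\rho\boldsymbol{w}$ with $\boldsymbol{w}\in L^2(\mu)$), together with the quantitative bound. You instead \emph{assume} $\calI(\mu)<\infty$, so that $\boldsymbol{w}$ is already available, and then try to lower-bound the slope by testing against it. The case $\calI(\mu)=\infty$ with $\calF(\mu)<\infty$, which you defer to ``a separate approximation argument'', is exactly the heart of the matter: one must show that $|\partial\calF|(\mu)<\infty$ cannot occur there. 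The paper handles this by starting from $|\partial\calF|(\mu)<\infty$ and \emph{deriving} the regularity of $\rho$ via duality (Hahn--Banach and Riesz representation), so no case is left open.

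Even within the case you do treat, the construction of the test curve $(\mu_t^\phi)$ as a weak solution of the continuity equation is genuinely problematic on a metric graph: as the introduction of the paper emphasises, solutions to the continuity equation with a prescribed velocity are generally non-unique once branching is present, and there is no flow-map representation to fall back on; you would also need to verify $\int_0^\delta\calI(\mu_t^\phi)\,\dd t<\infty$ before invoking Proposition~\ref{prop:chain-rule}. The paper circumvents all of this by using push-forwards $\pmb r_t=\Id+t\,e^V\pmb s$ along vector fields $\pmb s\in C^\infty(\overline\frakE)$ that \emph{vanish in a neighbourhood of every node} (and, in a second step, along vector fields supported on a single pair of adjacent edges, to extract continuity of $\rho$ at the vertices). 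With such $\pmb s$ the map $\pmb r_t$ is a genuine diffeomorphism of each edge into itself, the directional derivative of $\calF$ at $\mu$ is computed by an elementary change of variables, and $W_2(\mu,(\pmb r_t)_\#\mu)\le t\|e^V\pmb s\|_{L^2(\mu)}$. This produces a bounded linear functional on test vector fields whose Riesz representative is the desired $\boldsymbol{w}$, with $\|\boldsymbol{w}\|_{L^2(\mu)}\le|\partial\calF|(\mu)$. No chain rule, no CE-solution construction, and no density argument for gradients is required.
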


\begin{proof}
We assume that 
	$\mu = \rho \mathfrak m=\tilde\rho\lambda \in \calP(\frakG)$ 
satisfies 
	$\abs{\partial \mathcal F}(\mu) < \infty$, 
since otherwise there is nothing to prove. 

\smallskip
\emph{Step 1.} We show first that 
	$\rho\in W^{1,1}(\overline\frakE)$ 
and $\nabla \rho +\rho\nabla W[\mu] = \rho \pmb w$ with $\pmb w \in L^2(\mu)$.

For this purpose, 
take any $\pmb s \in C^\infty(\overline{\frakE})$ which vanishes in a neighbourhood of every node and put $\tilde{ \pmb s}=e^V\pmb s$.
As a consequence, the mapping $r_t : \frakG \to \frakG$ defined by $r_t:= \Id + t \tilde{\pmb s}$ maps each edge into itself for $t > 0$ sufficiently small. 
We claim that:
	\begin{equation}\label{eq:directional_derivative1}
	\lim_{t \searrow 0} 
		\frac{\mathcal F\big(
				(\pmb r_t)_\# \mu\big) 
			- \mathcal F(\mu)}{t} 
		= \int_\frakG \big[-\rho\nabla\pmb s + \pmb s \rho\nabla  W[\mu]\big]\dd\lambda.
	\end{equation}
To show this, note that \math{\pmb r_t} is injective for every \math{t} small enough. Therefore, the change of variables formula yields that the density $\tilde \rho_t$ of $(\pmb r_t)_\# \mu$ with respect to $\lambda$ satisfies
\begin{align*}
	\tilde \rho(x) = \tilde \rho_t(\pmb r_t(x)) \nabla\pmb r_t(x).
\end{align*}
 Consequently, with $F(r)=r\log r$ we have
\begin{align*}
\mathcal F((\pmb r_t)_\# \mu) - \mathcal F(\mu)
	 = & \int_\frakG F\Bigl( \frac{\tilde\rho}{ \nabla\pmb r_t}\Bigr)\nabla\pmb r_{t} - F(\tilde\rho) \dd\lambda 
	 	+ \int_\frakG [V\circ \pmb r_t - V]
		 	\dd \mu\\
		&+\int_{\frakG\times\frakG}[W\circ (\pmb r_{t}\otimes\pmb r_{t})-W ]\dd\mu\otimes\mu.	
\end{align*}
Note that $\nabla\pmb r_{t}=1 +t\nabla\tilde{\pmb s}$. Dividing by $t$ and letting $t\searrow0$ and noting that $\nabla \tilde{\pmb s} = e^V(\nabla \pmb s+\pmb s\nabla V)$ and $\tilde\rho\tilde{\pmb s}=\rho\pmb s$ we deduce 
\eqref{eq:directional_derivative1}. 	

For $t > 0$ sufficiently small, we then have
\begin{equation*}
	W_2\big(\mu, (\pmb r_t)_\# \mu \big) 
		\leq t \norm{\tilde{\pmb s}}_{L^2(\mu)}.
\end{equation*}
This estimate, together with \eqref{eq:directional_derivative1}, implies 
\begin{equation*}
	 \int_\frakG \big[-\rho\nabla\pmb s + \pmb s \rho\nabla  W[\mu]\big]\dd\lambda
	= 	\lim_{t \searrow 0} 
	\frac{\mathcal F\big(
			(\pmb r_t)_\# \mu\big) 
		- \mathcal F(\mu)}{t} 
	\leq 
	\abs{\partial \mathcal F}(\mu) 
		\norm{\tilde{\pmb s}}_{L^2(\mu)}.
\end{equation*}
Hence, the left-hand side defines an $L^2(\mu)$-bounded linear functional on a subspace of $L^2(\mu)$. 
Using the Hahn--Banach theorem we may extend this functional to $L^2(\mu)$.
The Riesz representation theorem then yields a unique element \math{\boldsymbol{w} \in L^2(\mu)} such that \math{\norm{\boldsymbol{w}}_{L^2(\mu)} \leq \abs{\partial \mathcal F}(\mu)} and
\begin{equation}\label{eq:test-s}
 	\int_\frakG 
 		\big[
			- \rho\nabla\pmb s 
 			+ \pmb s \rho\nabla (V + W[\mu])
		\big]
	\dd\lambda
= 	\int_\frakG 
		\pmb w \tilde{\pmb s} 
	\dd \mu = \int_\frakG \pmb w \rho s \dd\lambda
\end{equation}
for all $\pmb s$ as above.

Considering in particular functions $\pmb s$ supported on a single edge, 
we infer that 
	$\rho\in W^{1,1}(\overline\frakE)$
and
 	$\nabla \rho +\rho\nabla W[\mu] = \rho\pmb w$ with $\pmb w \in L^2(\mu)$.
In particular, we have $\rho\in C(\overline\frakE)$ by Sobolev embedding. 

\smallskip

\emph{Step 2.} Next we show that $\rho$ belongs to $C(\frakG)$. 
For this purpose we repeat the argument above, for a different class of functions $\pmb s$.

Consider a pair of adjacent edges $e,f \in \sE$ with common vertex $v$. 
For the moment, we identify the concatenation of the two edges with the interval $[-\ell_e,\ell_f]$ such that $v$ corresponds to $0$.
Let $s : [-\ell_e, \ell_f] \to \R$ be a $C^\infty$-function vanishing in a neighbourhood of $-\ell_e$ and $\ell_f$. 
In particular, $r_t := \Id + t \tilde s$ maps $[-\ell_e, \ell_f]$ into itself for $t > 0$ sufficiently small, where $\tilde s=e^V s$. 
Let the maps $\pmb s : \frakE \to \R$ and $\bfr_t : \frakE \to \frakG$ be defined on $e\cup f$ by $s$ and $r_t$ through the identification with $[-\ell_e,\ell_f]$ and by setting $\pmb s=0$ and $\bfr_t={\rm id}$ on all other edges.
Repeating the argument from Step 1, we infer that
$\rho\in W^{1,1}([-\ell_e,\ell_f])$ with the identification of $e,
f$ with $[-\ell_e,\ell_f]$ as above. In particular, $\rho$ is continuous at $0$. Since the choice of the pair $e,f$ is arbitrary, we conclude that $\rho\in C(\frakG)$.

\smallskip
Combining both steps, we infer that $\calI(\mu) < \infty$ and
\begin{align*}
	\calI(\mu) 
	= \| \pmb w \|_{L^2(\mu)}^2
	\leq 
		\abs{\partial \calF}^2(\mu).
\end{align*}
\end{proof}

From Lemma \ref{lem:slope_Fisher1} and the lower semicontinuity of $\mathcal I$ we immediately infer that $\sqrt{\calI}$ lies below the relaxed metric slope, i.e., the lower semicontinuous relaxation of $|\partial \mathcal F|$ given by
\[|\partial^{-}\mathcal F| (\mu):= \inf\big\{\liminf_{n}|\partial \mathcal F|(\mu_{n})~:~\mu_{n}\rightharpoonup \mu\big\}.\]

\begin{corollary}\label{cor:rel-slope}
For all $\mu\in \calP(\frakG)$ we have $\sqrt{\calI(\mu)}\leq |\partial^{-}\mathcal F|(\mu)$.
\end{corollary}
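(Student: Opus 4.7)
The plan is as follows. Fix $\mu \in \calP(\frakG)$ and any sequence $\mu_n \rightharpoonup \mu$ in $\calP(\frakG)$. We may assume $\liminf_n |\partial \calF|(\mu_n) < \infty$, else the claimed inequality is trivial. Passing to a subsequence realising the liminf, we reduce to the case $\sup_n |\partial \calF|(\mu_n) < \infty$, and it suffices to show
\begin{equation*}
	\calI(\mu) \leq \liminf_n |\partial \calF|^2(\mu_n).
\end{equation*}

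By Lemma \ref{lem:slope_Fisher1}, the slope bound gives $\sup_n \calI(\mu_n) < \infty$. Since $W$ is Lipschitz and $\nabla \rho_n = \rho_n \pmb w_n - \rho_n \nabla W[\mu_n]$, the triangle inequality in $L^2(\mu_n)$ gives $\sup_n \calI_0(\mu_n) < \infty$. Lemma \ref{lem:Fisher-Linfty} then yields a uniform $L^\infty$ bound on the densities $\rho_n$, with $\mu_n = \rho_n \mathfrak m$. Since $r \mapsto r \log r$ is bounded on bounded intervals and both $V$ is bounded on the compact space $\frakG$ (as it is Lipschitz) and $W$ is bounded, this provides $\sup_n \calF(\mu_n) < \infty$ as well.

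With both bounds $\sup_n \calF(\mu_n) < \infty$ and $\sup_n \calI(\mu_n) < \infty$ at hand, Lemma \ref{lem:dissipation-lsc-convex} applies and yields
\begin{equation*}
	\calI(\mu) \leq \liminf_n \calI(\mu_n) \leq \liminf_n |\partial \calF|^2(\mu_n).
\end{equation*}
Taking square roots and then the infimum over all weakly convergent sequences $\mu_n \rightharpoonup \mu$ gives $\sqrt{\calI(\mu)} \leq |\partial^- \calF|(\mu)$, as desired.

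The only subtlety is the step extracting an energy bound from a dissipation bound, which proceeds through the chain: slope bound $\Rightarrow$ $\calI$ bound $\Rightarrow$ (via Lipschitz $W$) $\calI_0$ bound $\Rightarrow$ (via Lemma \ref{lem:Fisher-Linfty}) uniform $L^\infty$ bound on $\rho_n$ $\Rightarrow$ entropy bound. Once this is in place, the result is a direct consequence of the lower semicontinuity statement in Lemma \ref{lem:dissipation-lsc-convex}, together with the pointwise bound $\calI \leq |\partial \calF|^2$ from Lemma \ref{lem:slope_Fisher1}.
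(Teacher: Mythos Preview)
Your proof is correct and follows the same route as the paper, which simply combines Lemma~\ref{lem:slope_Fisher1} with the lower semicontinuity of $\calI$ from Lemma~\ref{lem:dissipation-lsc-convex}. You are in fact more careful than the paper in explicitly verifying the hypothesis $\sup_n \calF(\mu_n)<\infty$ of Lemma~\ref{lem:dissipation-lsc-convex} via the chain $\calI$-bound $\Rightarrow$ $\calI_0$-bound $\Rightarrow$ $L^\infty$-bound on densities $\Rightarrow$ entropy bound, which the paper leaves implicit.
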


\subsection{Approximation via the JKO scheme}
In this section, we consider the time-discrete variational approximation scheme of Jordan--Kinderlehrer--Otto for the gradient flow \cite{jordan1998variational}. \medskip

 Given a time step $\tau>0$ and an initial
datum $\mu_0\in \calP(\frakG)$ with $\mathcal F(\mu_0)<\infty$, we consider a
sequence $(\mu^\tau_n)_n$ in $\calP(\frakG)$ defined recursively via

\begin{align}\label{eq:min-move}
 \mu^\tau_0 = \mu_0,\quad \mu^\tau_n \in \underset{\nu}{\argmin} \Big\{\mathcal F(\nu)
 + \frac{1}{2\tau} W_{2}(\nu,\mu^\tau_{n-1})^2\Big\}.
\end{align}

Then we build a discrete gradient flow trajectory as the piecewise
constant interpolation $(\bar\mu^\tau_t)_{t\geq 0}$ given by
\begin{align}\label{eq:interpolation}
 \bar\mu^\tau_0 = \mu_0, \quad \bar\mu^\tau_t = \mu^\tau_n\ \text{if } t\in\big((n-1)\tau,n\tau]. 
\end{align}

Then we have the following result.

\begin{theorem}\label{thm:JKO}
 For any $\tau>0$ and $\mu_0\in \calP(\frakG)$ with $\mathcal F(\mu_0)<\infty$
 the variational scheme \eqref{eq:min-move} admits a solution
 $(\mu^\tau_n)_n$. As $\tau\to0$, for any family of discrete
 solutions there exists a sequence $\tau_k\to0$ and a locally
 $2$-absolutely continuous curve $(\mu_t)_{t\geq0}$ such that
 \begin{align}\label{eq:scheme-limit}
 \bar\mu^{\tau_k}_t \rightharpoonup \mu_t\quad \forall
 t\in[0,\infty).
 \end{align}
 Moreover, any such limit curve is a gradient flow curve of $\mathcal F$,
 i.e., a weak solution to the diffusion equation \eqref{eq:MKVeq}.
\end{theorem}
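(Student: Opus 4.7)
The strategy is to run the classical Jordan--Kinderlehrer--Otto minimising-movement analysis, then combine the resulting a priori bounds with the energy-dissipation identity of Theorem~\ref{thm:EDE} and the relaxed-slope bound of Corollary~\ref{cor:rel-slope} to identify limit curves with weak solutions of \eqref{eq:MKVeq}; the last ingredient is precisely what bypasses the failure of geodesic semiconvexity of $\calE_V$ exhibited in Section~\ref{sec:2}.

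First, existence in \eqref{eq:min-move} follows by the direct method: $(\calP(\frakG), W_2)$ is compact, $\calF = \calE_V + \calW$ is weakly lower semicontinuous, and $\nu \mapsto W_2^2(\nu, \mu_{n-1}^\tau)$ is weakly continuous. Optimality of $\mu_n^\tau$ yields the one-step estimate
\begin{equation*}
	\calF(\mu_n^\tau) + \frac{1}{2\tau} W_2^2(\mu_n^\tau, \mu_{n-1}^\tau) \leq \calF(\mu_{n-1}^\tau),
\end{equation*}
which telescopes, using boundedness below of $\calF$, into $\sum_{k \leq N} W_2^2(\mu_k^\tau, \mu_{k-1}^\tau) \leq 2\tau\bigl(\calF(\mu_0) - \inf \calF\bigr)$; this in turn gives the $\tfrac{1}{2}$-Hölder bound $W_2(\bar\mu^\tau_s, \bar\mu^\tau_t)^2 \leq C(|t-s| + \tau)$ and the uniform estimate $\int_0^T |\dot{\hat\mu}^\tau|^2(t)\dd t \leq C$ for the piecewise-geodesic interpolation $\hat\mu^\tau_t$. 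Combining these with compactness of $(\calP(\frakG), W_2)$ via the refined Ascoli--Arzel\`a theorem (cf.~\cite[Proposition~3.3.1]{AGS}) and lower semicontinuity of the metric-derivative integral produces the required subsequence $\tau_k \to 0$ and locally $2$-absolutely continuous limit curve $(\mu_t)_{t \geq 0}$ with $\bar\mu^{\tau_k}_t \rightharpoonup \mu_t$ for every $t$.

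To identify the limit, I would introduce the De~Giorgi variational interpolation $\tilde\mu^\tau_t$, defined for $t \in ((n-1)\tau, n\tau]$ as a minimiser of $\nu \mapsto \calF(\nu) + \frac{1}{2(t-(n-1)\tau)}\, W_2^2(\nu, \mu_{n-1}^\tau)$. The standard De~Giorgi computation (cf.~\cite[Theorem~3.1.4]{AGS}) then yields
\begin{equation*}
	\calF(\bar\mu^\tau_T) + \frac{1}{2}\int_0^T \bigl|\partial^- \calF\bigr|^2(\tilde\mu^\tau_t) \dd t + \frac{1}{2}\int_0^T |\dot{\hat\mu}^\tau|^2(t) \dd t \leq \calF(\mu_0),
\end{equation*}
and the three interpolations $\bar\mu^\tau, \tilde\mu^\tau, \hat\mu^\tau$ share the weak limit $\mu_t$, since their mutual $W_2$-distances are dominated by $W_2(\mu_n^\tau, \mu_{n-1}^\tau)$, which vanishes as $\tau \to 0$ by the telescoping bound.

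Passing to the $\liminf$ along $\tau_k$ then closes the argument: weak lsc of $\calF$, lsc of the metric-derivative integral along uniformly converging curves, and the combination of Corollary~\ref{cor:rel-slope} with a Fatou-type application of Lemma~\ref{lem:dissipation-lsc-convex} give
\begin{equation*}
	\calF(\mu_T) + \frac{1}{2}\int_0^T \bigl(|\dot\mu|^2(t) + \calI(\mu_t)\bigr)\dd t \leq \calF(\mu_0),
\end{equation*}
i.e., $\mathcal L_T(\mu) \leq 0$; the reverse inequality $\mathcal L_T(\mu) \geq 0$ from Theorem~\ref{thm:EDE} forces equality, so $\mu_t = \rho_t \mathfrak m$ is a weak solution of \eqref{eq:MKVeq}. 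The main technical obstacle will be this final lower-semicontinuity step: Lemma~\ref{lem:dissipation-lsc-convex} requires joint bounds on $\calF$ and $\calI$ along the approximating sequence, so one must first localise (via Fatou and the De~Giorgi inequality) to times where $\tilde\mu^{\tau_k}_t$ enjoys such uniform control, and then use Corollary~\ref{cor:rel-slope} to pass from the unwieldy relaxed slope $|\partial^- \calF|$ to the explicit convex, lower semicontinuous functional $\sqrt{\calI}$.
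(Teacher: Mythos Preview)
Your proposal is correct and follows the same strategy as the paper; the paper's proof is simply more terse, invoking \cite[Corollary~2.2.2, Proposition~2.2.3, Theorem~2.3.2]{AGS} as black boxes for the existence, compactness, and De~Giorgi energy-dissipation steps that you have spelled out, and then applying Corollary~\ref{cor:rel-slope} and Theorem~\ref{thm:EDE} exactly as you do. The only minor difference is the order in which the slope bound and lower semicontinuity are combined: the paper first passes to the limit in the relaxed slope $|\partial^-\calF|$ (which is lower semicontinuous by definition) and only afterwards invokes $\sqrt{\calI}\leq|\partial^-\calF|$, thereby sidestepping the localisation issue with Lemma~\ref{lem:dissipation-lsc-convex} that you correctly flag.
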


\begin{proof}[Proof of Theorem \ref{thm:JKO}]
The result basically follows from general results for metric gradient flows, where the scheme is known as the minimizing movement scheme; see \cite[Section 2.3]{AGS}.
 We consider the metric space $(\calP(\frakG),W_2)$ and endow
 it with the weak topology $\sigma$. It follows that \cite[Assumptions 2.1 (a,b,c)]{AGS} are satisfied. Existence of a solution to the variational scheme
 \eqref{eq:min-move} and of a subsequential limit curve $(\mu_t)_t$
 now follows from \cite[Corollary 2.2.2, Proposition 2.2.3]{AGS}. Moreover,
 \cite[Theorem 2.3.2]{AGS} gives that the limit curve is a curve of
maximal slope for the strong upper gradient $|\partial^-\mathcal F|$, i.e.,
 \begin{align*}
 \frac12\int_0^t|\dot\mu|^2(r) + |\partial^-\mathcal F(\mu_r)|^2\dd r + \mathcal F(\mu_t)
 \leq \mathcal F(\mu_0).
 \end{align*}
 Thus, by Corollary \ref{cor:rel-slope}, it is also a curve of maximal
 slope for the strong upper gradient $\sqrt{\calI}$. Theorem \ref{thm:EDE} yields the identification with weak solutions to \eqref{eq:MKVeq}.
 \end{proof}

 \subsection*{Acknowledgement} {\small 
ME acknowledges funding by the Deutsche Forschungsgemeinschaft (DFG), Grant SFB 1283/2 2021 – 317210226.
 DF and JM were supported by the European Research Council (ERC) under the European Union's Horizon 2020 research and innovation programme (grant agreement No 716117). JM also acknowledges support by the Austrian Science Fund (FWF), Project SFB F65. The work of DM was partially supported by the Deutsche Forschungsgemeinschaft (DFG), Grant 397230547. This article is based upon work from COST Action 18232 MAT-DYN-NET, supported by COST (European Cooperation in Science and Technology), www.cost.eu.
We wish to thank Martin Burger and Jan-Frederik Pietschmann for useful discussions.
We are grateful to the anonymous referees for their careful reading and useful suggestions.
 }

\bibliographystyle{my_plain}
\bibliography{bibdb}

\end{document}